\theoremstyle{definition}
\newtheorem{definition}{Definition}[section]
\newtheorem{notation}[definition]{Notation}
\theoremstyle{plain}
\newtheorem{theorem}[definition]{Theorem}
\newtheorem{proposition}[definition]{Proposition}
\newtheorem{corollary}[definition]{Corollary}
\newtheorem{lemma}[definition]{Lemma}
\theoremstyle{remark}
\newtheorem{remark}[definition]{Remark}
\newtheorem{example}[definition]{Example}
\def\a{\alpha} \def\b{\beta}   
\def\Ga{\Gamma}
\def\ZZ{\mathbb{Z}}    
\def\le{\leqslant} \def\leq{\leqslant} \def\ge{\geqslant} \def\geq{\geqslant}
\def\l{\langle} \def\r{\rangle}
\def\calM{\mathcal{M}}         \def\calN{\mathcal{N}}            
\def\Aut{{\mathrm{Aut}}} \def\Inn{{\mathrm{Inn}}}
\def\C{\mathrm{C}}    
\def\D{\mathrm{D}}    
\def\N{\mathrm{N}}    
\def\S{\mathrm{S}}    
\def\RM{\mathsf{RotaMap}}
\def\BiRM{\mathsf{BiRotaMap}}
\title{Embedding a Praeger-Xu graph into a surface}
\subjclass[2010]{05C25, 20B05, 20C15}
\keywords{Praeger-Xu graph, rotary map}
\date{}
\thanks{This work was supported  by NSFC grant 12350710787}
\author[Ding]{Zhaochen Ding}
	\address{Department of Mathematics\\
 University of Auckland\\
 Auckland Central, Auckland 1010\\
 New Zealand}
	\email{dzha470@aucklanduni.ac.nz}
\author[Guo]{Zheng Guo}
	\address{Department of Mathematics\\
		Southern University of Science and Technology\\
		Shenzhen, Guangdong 518055\\
		P. R. China}
	\email{12131227@mail.sustech.edu.cn}
 \author[Liu]{Luyi Liu}
	\address{Interenational Center for Mathematics\\
		Southern University of Science and Technology\\
		Shenzhen, Guangdong 518055\\
		P. R. China}
	\email{12031108@mail.sustech.edu.cn}
\begin{document}
	
	\begin{abstract}
 Rotary maps (orientably regular maps) are highly symmetric graph embeddings on orientable surfaces. This paper classifies all rotary maps whose underlying graphs are Praeger-Xu graphs, denoted $\operatorname{C}(p,r,s)$, 
 for any odd prime $p$ that does not divide $r$. 
 Our main result establishes a one-to-one correspondence between the isomorphism classes of these maps and the multiplicity-free representations of the dihedral group $\D_{2r}$
  over the finite field $\mathbb{F}_p$. This work extends a recent classification for the case where $p=2$.

	\end{abstract}
	\maketitle
	\section{Introduction}
	A \emph{rotary map} (or \emph{orientably regular map}) is an orientable map whose orientation-preserving automorphism group acts transitively on the arcs of its underlying graph. 
	Such maps provide rich examples of highly symmetric structures on surfaces, revealing deep interconnections between group theory, Galois theory, and the theory of Riemann surfaces, cf. \cite{lando2004Graphs}. 
	A central problem in topological graph theory involves classifying all rotary maps with a specified family of underlying graphs. 
	Early work by James and Jones \cite{james1985Regular} established a complete classification of rotary maps with complete underlying graphs.
	Following sustained collaborative efforts, the classification of rotary maps with complete bipartite underlying graphs has also been fully established \cite{du2007Regular,du2010Regular,jones2010Regular,jones2007Regular}.

	A foundational infinite family of arc-transitive graphs of valency $2p$ (for 
	 prime $p$) arises from the Praeger–Xu graphs $\operatorname{C}(p,r,s)$,  introduced in \cite{praeger1989Characterization}. 
	Despite being historically understudied, Praeger–Xu graphs have proven valuable in advancing the classification and structural analysis of arc-transitive graphs, in works such as \cite{giudici2020Arctransitive,li2024Covers,potocnik2015Bounding,praeger1989Highly}.
	Recent work by Jajcay, Poto\v{c}nik, and Wilson \cite{jajcayPRAEGERXU,jajcay2022Cayleyness} has focused on $4$-valent Praeger–Xu graphs. 
	They characterized their cycle structures, orientably regular embeddings, semitransitive orientations, and vertex-transitive automorphism groups. 
	Their results further unveiled connections to linear codes, and they used the consistent cycle structure of these graphs to construct all rotary maps with underlying graph $\operatorname{C}(2,r,s)$.
	
	Therefore, a question arises: can we classify all rotary embeddings of a Praeger-Xu graph $\operatorname{C}(p,r,s)$?
	
	Although the method of Jajcay, Poto\v{c}nik, and Wilson applies to cases where $p\ge 3$, it faces significant challenges in determining isomorphism between two rotary maps constructed through their approach. 
	To overcome this limitation, we alternatively employ the theory of coset maps introduced by Li, Praeger, and Song \cite{LiPraegerS-A}.

	A \emph{rotary pair} $(\rho,\tau)\in G\times G$ of a group $G$ satisfies $G=\langle\rho,\tau\rangle$ and $|\tau|=2$.
	Such pairs give rise to rotary maps through the coset map construction, and conversely, every rotary map arises from a rotary pair, see \cite[Section 4]{LiPraegerS-A}. 
	This correspondence facilitates the study of rotary maps via rotary pairs. 
	
	As the case where $p=2$ has been well-studied by Jajcay, Poto\v{c}nik, and Wilson, we only consider cases where $p\ge 3$.
	Additionally, we assume $p\nmid r$ in this paper.
	For the case where $p\mid r$, the classification is significantly more difficult.
	We give Example \ref{example:p|r} to illustrate this.
 
	Let $a,b$ be two involutions which generate $\operatorname{D}_{2r}$, and let $\gamma_{-1,1}:\operatorname{D}_{2r}\rightarrow \mathbb{F}_p^\times$
	be a representation of degree $1$ given by
	\[\gamma_{-1,1}(a)v=-v,\gamma_{-1,1}(b)v=v\] for every $v\in\mathbb{F}_p$.
	Suppose $\psi$ is a representation of a group $G$, with irreducible decomposition:
	\[\psi=\sum_{i=1}^m m_i\psi_i.\]
	Then $\psi$ is \emph{multiplicity-free} if $m_i=1$ for every $i\in\{1,\ldots,m\}$.
	Our main results establish that rotary maps with Praeger-Xu underlying graphs are fundamentally tied to multiplicity-free representations of dihedral groups, as follows:
	
	\begin{theorem}\label{thm:main}
		%
		%
		There is a one-to-one correspondence between rotary maps with underlying graph $\operatorname{C}(p,r,s)$ and 
		\begin{enumerate}[{\rm (i)}]
			\item multiplicity-free representations $\psi$ of $\operatorname{D}_{2r}$ over $\mathbb{F}_p$ such that $\deg(\psi) = s+1$ if $r$ is odd;\vskip0.1in
   
			\item multiplicity-free representations $\psi$ of $\operatorname{D}_{2r}$ over $\mathbb{F}_p$ such that $\deg(\psi) = s+1$ and $\gamma_{-1,1}$ is not a constituent of $\psi$ if $r$ is even.
		\end{enumerate}
	\end{theorem}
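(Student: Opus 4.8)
The plan is to pass from maps to groups via the coset-map correspondence of \cite{LiPraegerS-A} and then to analyse the resulting groups representation-theoretically. Given a rotary map $\mathcal{M}$ with underlying graph $\operatorname{C}(p,r,s)$, let $G=\Aut^+(\mathcal{M})$ be its orientation-preserving automorphism group and let $(\rho,\tau)$ be an associated rotary pair, so that $G=\langle\rho,\tau\rangle$, $|\tau|=2$, the vertex-stabiliser is $\langle\rho\rangle$, and $|\rho|$ equals the valency $2p$. Since $\mathcal{M}$ is a map on $\operatorname{C}(p,r,s)$ we have $G\le\Aut(\operatorname{C}(p,r,s))$, and $G$ projects onto the automorphism group $\operatorname{D}_{2r}$ of the underlying cycle $C_r$ by arc-transitivity on the base. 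First I would set $P=O_p(G)$, which equals the kernel of this projection; a dart count gives $|G|=2rp^{s+1}$, hence $|P|=p^{s+1}$, and because $p\nmid 2r$ the Schur--Zassenhaus theorem yields $G=P\rtimes\operatorname{D}_{2r}$ with $P$ elementary abelian of rank $s+1$. The conjugation action of $\operatorname{D}_{2r}=G/P$ on $P$ is then an $\mathbb{F}_p$-representation $\psi$ of degree $s+1$, and the whole theorem reduces to characterising which $\psi$ occur and showing the assignment $\mathcal{M}\mapsto\psi$ is a bijection.

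The next step is to prove $\psi$ is multiplicity-free. The key observation is that the rotation subgroup $\langle ab\rangle\cong\mathbb{Z}_r$ of $\operatorname{D}_{2r}$ acts on $P$ through the cyclic shift coming from the $r$ fibres of $\operatorname{C}(p,r,s)$ over $C_r$; concretely, $\psi(ab)$ is a monomial matrix whose underlying permutation is a single $r$-cycle, so $\operatorname{Res}_{\langle ab\rangle}\psi$ embeds in the regular $\mathbb{F}_p[\mathbb{Z}_r]$-module. Since $p\nmid r$, that regular module is multiplicity-free, whence $\operatorname{Res}_{\langle ab\rangle}\psi$, and therefore $\psi$ itself, are multiplicity-free. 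This also forces distinct constituents of $\psi$ to have distinct $ab$-eigenvalues, which is exactly what permits $\psi$ to be realised on the fibres. Maschke's theorem (again $p\nmid 2r$) makes $P$ semisimple, so $\psi$ splits into distinct irreducibles and is recorded up to isomorphism by its constituent set, giving degree $s+1$ as required.

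Finally I would pin down which multiplicity-free $\psi$ of degree $s+1$ arise, where the parity of $r$ enters. Writing $\rho=\bar a w$ with $\bar a$ the involution of the chosen complement lifting $a$ and $w\in P$, one computes $\rho^2=(1+a)w$, so $P_v:=\langle\rho^2\rangle=\langle(1+a)w\rangle$ is fixed by $a$ and $G_v=\langle\rho\rangle$ is cyclic of order $2p$ precisely when $(1+a)w\neq0$. The decisive point is that the vertex-stabiliser must be cyclic, not dihedral, for a rotary (rather than merely reflexible) map: inspecting the action of the reflection $a$ constituent-by-constituent shows that, when $r$ is even, the constituent $\gamma_{-1,1}$—on which $a$ acts as $-1$ and $ab$ acts as $-1$—is exactly the one that would force a dihedral stabiliser $\operatorname{D}_{2p}$ and destroy the de Bruijn adjacency of $\operatorname{C}(p,r,s)$, while for $r$ odd no such constituent exists and there is no obstruction. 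For the converse I would run the construction backwards: from an admissible $\psi$ form $G=P\rtimes\operatorname{D}_{2r}$, choose $w$ with $(1+a)w\neq0$ to obtain a rotary pair $(\rho,\tau)$, and verify via the coset-map construction that the resulting orientably regular map has valency $2p$, exactly $rp^s$ vertices, is connected, and carries precisely the fibre/shift structure of $\operatorname{C}(p,r,s)$. The bijection, and in particular injectivity, then follows by showing that two such maps are isomorphic if and only if their modules $P$ are isomorphic as $\operatorname{D}_{2r}$-representations, translating map isomorphism into conjugacy inside $\Aut(\operatorname{C}(p,r,s))$ and thence into module isomorphism. I expect the main obstacle to be precisely this last stage: proving that every admissible $\psi$ is realised by a genuine $\operatorname{C}(p,r,s)$-map—that the coset graph is the Praeger--Xu graph on the nose, with no folding or collapse of fibres—and, dually, that the cyclic-stabiliser requirement forces exactly the exclusion of $\gamma_{-1,1}$ when $r$ is even and nothing further when $r$ is odd.
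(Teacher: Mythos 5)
Your opening reduction (coset maps, the projection onto $\Aut(\operatorname{C}_r)=\operatorname{D}_{2r}$, the dart count and Schur--Zassenhaus giving $G=P\rtimes_\psi\operatorname{D}_{2r}$ with $P\cong\mathbb{Z}_p^{s+1}$) matches the paper's Theorem~\ref{thm:arc-regular-PX-graph}, except that you ignore the exceptional case $(r,s)=(4,1)$, where $\operatorname{C}(p,4,1)=K_{2p,2p}$ has a larger automorphism group and the paper needs the separate Lemma~\ref{lem:ArcReg-Auto-PXGraph-3}. The serious problem is your multiplicity-freeness step. Your argument — that $P$ embeds $\langle ab\rangle$-equivariantly into the regular module $\mathbb{F}_p[\mathbb{Z}_r]$ — proves strictly more than the theorem asserts: it forces the \emph{restriction of $\psi$ to the rotation subgroup $\mathbb{Z}_r$} to be multiplicity-free. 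That conclusion is incompatible with the list you are trying to establish. Take $\psi=\gamma_{1,1}\oplus\gamma_{-1,-1}$: it is multiplicity-free over $\operatorname{D}_{2r}$, of degree $2$, and has no $\gamma_{-1,1}$ constituent, so the statement says it corresponds to a rotary map on $\operatorname{C}(p,r,1)$; but $\gamma_{1,1}(ab)=\gamma_{-1,-1}(ab)=1$, so its restriction to $\langle ab\rangle$ is the trivial module with multiplicity two, which does not embed in $\mathbb{F}_p[\mathbb{Z}_r]$. You must resolve this clash before either direction of your argument can close. (The tension is real: here $G\cong\mathbb{Z}_p\times\operatorname{D}_{2pr}$, every element $\rho$ of order $2p$ has $\rho^2$ central, hence $\langle\rho^2\rangle\le G_\alpha\cap G_\beta$ and, by the edge-multiplicity formula the paper itself uses in Lemma~\ref{lem:arc-regular-PX-map}, every rotary map for this group has edge multiplicity at least $p$ and a multicycle, not $\operatorname{C}(p,r,1)$, as underlying graph. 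The paper avoids your stronger condition by proving multiplicity-freeness over $\operatorname{D}_{2r}$ only, via the generation argument of Lemma~\ref{lem:unqiue-minimal-normal-subgroup}, and realises the reducible $\psi$ by direct products of irreducible maps through Theorem~\ref{thm:existence-decomposition}; your route and the paper's cannot both yield the stated list, so at minimum your proof does not prove this statement.)

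Your exclusion of $\gamma_{-1,1}$ is also argued for the wrong reason. If $\gamma_{-1,1}$ occurs alongside other constituents, $(1+a)w$ is still nonzero on those other constituents, so $\rho=\bar a w$ still has order $2p$ and the vertex stabiliser $\langle\rho\rangle$ is still cyclic; nothing becomes dihedral. The genuine obstruction is failure of generation: the $\gamma_{-1,1}$-component of $(1+a)w$ vanishes, and since $b$ acts trivially there as well, $\langle\rho,\tau\rangle$ never reaches that constituent, so $(\rho,\tau)$ generates a proper subgroup and is not a rotary pair of $G$. (This also explains why $\gamma_{-1,-1}$, on which $a$ likewise acts as $-1$, is not excluded: there $\rho^p\tau$ has order $pr$ and its $r$-th power recovers the missing $\mathbb{Z}_p$.) Finally, the converse (every admissible $\psi$ is realised on $\operatorname{C}(p,r,s)$, with no collapse of fibres) and the injectivity of the correspondence are only announced in your last paragraph; the paper carries this weight with the direct-product decomposition and its uniqueness (Theorems~\ref{thm:existence-decomposition} and~\ref{ud}) together with the count of $\Aut(G)$-orbits on rotary pairs in Section~\ref{sec:irr-PX-map}, none of which your outline replaces.
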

	
	An important corollary to  Theorem \ref{thm:main} is as follows.
	\begin{corollary}\label{cor:faithful}
		Let $s$ be a positive integer and let $r\ge \max\{3,s+1\}$ be a prime such that $p\ne r$. Let $d$ be the least positive integer such that $r\mid p^d-1$.
        Let \[\zeta=\left\{\begin{aligned}
            d&,\ \mbox{$d$ is even and $p^{d/2}\equiv -1\pmod{r}$},\\
            2d&,\ \mbox{otherwise}.
        \end{aligned}\right.\]
        Then there is a rotary map with underlying graph isomorphic to $\operatorname{C}(p,r,s)$ if and only if $s\equiv -1,0,1\pmod{\zeta}$.
		
	\end{corollary}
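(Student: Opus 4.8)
The plan is to push the existence question through Theorem~\ref{thm:main} and reduce it to a counting problem about the irreducible $\mathbb{F}_p$-representations of $\D_{2r}$, after which the admissible degrees can simply be read off. Since $r\ge 3$ is prime it is odd, so part~(i) of Theorem~\ref{thm:main} applies: a rotary map with underlying graph $\operatorname{C}(p,r,s)$ exists if and only if $\D_{2r}$ admits a multiplicity-free $\mathbb{F}_p$-representation of degree exactly $s+1$. Because $p$ is odd and $p\ne r$, we have $p\nmid 2r=|\D_{2r}|$, so $\mathbb{F}_p[\D_{2r}]$ is semisimple by Maschke's theorem and I may argue with ordinary (coprime-characteristic) representation theory over $\mathbb{F}_p$.

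First I would list the absolutely irreducible representations of $\D_{2r}$ over $\overline{\mathbb{F}}_p$: the trivial and the sign representation (degree $1$), together with $(r-1)/2$ representations of degree $2$, one for each pair $\{j,-j\}$ with $j\in(\mathbb{Z}/r\mathbb{Z})^\times$, on which the rotation $ab$ of order $r$ acts with eigenvalues $\omega^{\pm j}$ for a fixed primitive $r$-th root of unity $\omega$. To descend to $\mathbb{F}_p$ I would invoke the fact that Schur indices over finite fields are trivial, so every irreducible $\mathbb{F}_p$-representation is the sum of a single Frobenius orbit of absolutely irreducible ones, of degree $(\text{absolute degree})\times(\text{orbit size})$. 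The two degree-$1$ representations are defined over the prime field, hence fixed by Frobenius. The Frobenius $x\mapsto x^p$ sends the degree-$2$ representation labelled $\{j,-j\}$ to the one labelled $\{pj,-pj\}$, so all these orbits have the common size $e$, the least positive integer with $p^e\equiv\pm1\pmod r$; in particular $e\mid(r-1)/2$.

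The conceptual step is to identify $e$ with $\zeta/2$. As $r$ is an odd prime, $(\mathbb{Z}/r\mathbb{Z})^\times$ is cyclic and has a unique element of order $2$, namely $-1$; hence $-1\in\langle p\rangle$ precisely when $d:=\operatorname{ord}_r(p)$ is even, in which case $p^{d/2}=-1$ automatically. Thus if $d$ is even then $e=d/2$ and $\zeta=d=2e$, while if $d$ is odd then $p^e\equiv-1$ is impossible, so $e=d$ and $\zeta=2d=2e$; in particular the condition ``$p^{d/2}\equiv -1$'' in the definition of $\zeta$ is redundant here and $\zeta=2e$ in all cases. Consequently the irreducible $\mathbb{F}_p$-representations of $\D_{2r}$ are exactly two of degree $1$ (trivial and sign, which are distinct since $p$ is odd) and $(r-1)/\zeta$ of degree $\zeta$.

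Finally I would determine the degrees realized by multiplicity-free representations. Such a representation uses each irreducible at most once, so its degree has the shape $a+b\zeta$ with $a\in\{0,1,2\}$ (the number of degree-$1$ constituents) and $0\le b\le(r-1)/\zeta$ (the number of degree-$\zeta$ constituents), and conversely every such pair $(a,b)$ is realized. Therefore $s+1$ is achievable precisely when $s+1\equiv 0,1,2\pmod\zeta$, i.e.\ $s\equiv-1,0,1\pmod\zeta$, provided the corresponding $b$ respects the bound $b\le(r-1)/\zeta$. The remaining — and only delicate — point is to check that this bound never obstructs an otherwise admissible degree: using $s+1\le r$ together with $\zeta\nmid r$ (which holds because $\zeta=2e$ is even while $r$ is an odd prime, so $\zeta\ne r$ and, being $\ge 2$, $\zeta\nmid r$), I would verify case by case on the residue of $s+1$ modulo $\zeta$ that the required $b$ satisfies $b\le(r-1)/\zeta$, completing the equivalence. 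The main obstacle is thus not a single hard estimate but the careful Galois-descent bookkeeping of the middle steps and this boundary check at $s+1=r$.
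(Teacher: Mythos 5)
Your proposal is correct and follows essentially the same route as the paper: reduce via Theorem~\ref{thm:main}(i) (valid since a prime $r\ge 3$ is odd) to the existence of a multiplicity-free $\mathbb{F}_p$-representation of $\D_{2r}$ of degree $s+1$, observe that the irreducible degrees are $1,1,\zeta,\dots,\zeta$ with $(r-1)/\zeta$ copies of $\zeta$, and conclude that the attainable degrees are exactly those congruent to $0,1,2$ modulo $\zeta$. The only differences are that you rederive the degree list by Galois descent on the absolutely irreducible characters (correctly noting that for prime $r$ the condition $p^{d/2}\equiv -1$ is automatic when $d$ is even, so $\zeta=2e$ uniformly) where the paper simply cites Lemma~\ref{lem:faith-rep}, and that you make explicit the boundary verification $b\le (r-1)/\zeta$ via $s+1\le r$ and $\zeta\nmid r$, a point the paper's one-line proof leaves implicit.
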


	\section{Preliminaries}
	In this section, we introduce some basic definitions and notations for Praeger-Xu graphs, rotary maps and some groups.
	Some known results are presented without proofs.
	In this paper, a graph may have multiple edges but does not contain loops.

	\subsection{Praeger-Xu graphs}
	
	Motivated by questions in non-locally primitive arc-transitive graphs, Praeger and Xu studied vertex valency $4$ arc-transitive graphs -- the smallest vertex valency without local primitivity guarantees. 
	They discovered a family of graphs of valency $2p$, where $p$ is a prime. 
	Such a class of arc-transitive graphs is defined as follows:
	\begin{definition}
		Let $p$, $r$, $s$ be positive integers such that $p\ge 2$ and $r\ge 3$.
		Define a simple graph $\operatorname{C}(p,r,s)=(V,E)$ as follows:
		\begin{enumerate}[{\rm (i)}]
			\item  the vertex set $V$ is $\mathbb{Z}_r\times\mathbb{Z}^s_p$;\vskip0.1in
   
			\item  the edge set $E$ is defined to be the set of all pairs of the form \[\{(i,x_0,x_1,\ldots,x_{s-1}),(i+1,x_1,\ldots,x_{s-1},x_s)\}\] for every $i\in\mathbb{Z}_r$ and $x_0,x_1,\ldots,x_{s-1},x_s\in \mathbb{Z}_p$.
		\end{enumerate}
		The graph $\operatorname{C}(p,r,s)$ is called a \emph{Praeger-Xu graph}, or simply, a PX graph.
	\end{definition}
	
	We give the following notation, which will be used frequently.
	\begin{notation}
		Throughout this paper, we assume 
		$p$ is an odd prime and 
		$r\ge 3$
		is an integer, with 
		$p\nmid r$ unless otherwise specified.
	\end{notation}
	
	It is known that a PX graph $\operatorname{C}(p,r,s)$ is arc-transitive if and only if $r\ge s+1$ (see \cite[Theorem 2.10]{praeger1989Characterization}).

	The following theorem provides a criterion for determining whether an arc-transitive graph is a Praeger-Xu graph.
	\begin{theorem}[{\cite[Theorem 1]{praeger1989Characterization}}]\label{thm:PX-equiv}
		Let $\Gamma$ be a connected, simple, $G$-arc-transitive  graph of valency $2p$.
		If $G$ contains an abelian normal $p$-subgroup $M$ which is not semiregular on the vertices of $\Gamma$, then $\Gamma=\operatorname{C}(p,r,s)$ for some $r\ge\max\{3,s+1\}$ and $s\ge 1$.
	\end{theorem}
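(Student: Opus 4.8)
The plan is to use the normal subgroup $M$ to form a quotient graph on its orbits, show that this quotient is a cycle, and then recoordinatise $\Gamma$ against that cycle so as to read off the shift-register adjacency that defines $\operatorname{C}(p,r,s)$. First I would make two harmless reductions. As $\Gamma$ is connected and $G$-arc-transitive, $G$ is transitive on vertices and edges. Replacing $M$ by $\{x\in M: x^p=1\}$, which is characteristic in $M$ and hence still normal in $G$, and which is still not semiregular (a nontrivial stabiliser $M_v$ contains an element of order $p$), I may assume $M\cong\mathbb{Z}_p^k$ is elementary abelian. Since $M\trianglelefteq G$ and $G$ is vertex-transitive, the $M$-orbits form a $G$-invariant partition $\mathcal B$ of $V(\Gamma)$ into blocks of a common size; and because $M$ is abelian and transitive on each block $B=v^M$, the stabiliser $M_v$ is exactly the kernel of the $M$-action on $B$, so $M_v$ fixes $B$ pointwise and $B\cong M/M_v$ is elementary abelian of some order $p^s$ with $s\ge1$.

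Next comes the local analysis. Since $M_v=M\cap G_v\trianglelefteq G_v$ and $G_v$ is transitive on the $2p$ vertices of $\Gamma(v)$, the $M_v$-orbits on $\Gamma(v)$ are permuted transitively by $G_v$ and so share a common size, necessarily a power of $p$ dividing $2p$, hence $1$ or $p$. If that size were $1$, then $M_v$ would fix $\Gamma(v)$ pointwise; since the local action is the same at every vertex by transitivity, this pointwise fixing would propagate along paths and, by connectivity, force $M_v$ to fix all of $\Gamma$, so $M_v=1$, against non-semiregularity. Therefore $M_v$ has exactly two orbits $\Delta_1,\Delta_2$ on $\Gamma(v)$, each of size $p$, and $G_v$ induces a transitive, hence block-swapping, action on $\{\Delta_1,\Delta_2\}$.

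I would then pass to the normal quotient $\Gamma_{\mathcal B}$, whose vertices are the blocks and whose edges join blocks spanned by an edge of $\Gamma$. Each $M_v$-orbit $\Delta_j$ lies in a single block, so $B=v^M$ meets at most two other blocks; the crux is that $\Delta_1$ and $\Delta_2$ lie in \emph{distinct} blocks, whence $\Gamma_{\mathcal B}$ is connected and $2$-regular, i.e. a cycle $C_r$ on which $G$ acts through a subgroup of $\operatorname{D}_{2r}$, and $r\ge 3$. Excluding the degenerate alternative in which both local orbits fall into one block (which would give $r\le 2$) is the first delicate point: a faithfulness and orbit-size count shows such a configuration would force $M\cong\mathbb{Z}_p^{s+1}$ with $M_v\cong\mathbb{Z}_p$, and a closer look at simplicity and arc-transitivity is then needed to rule it out, so I flag this case as requiring separate care.

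Finally I would reconstruct the shift-register labels. Writing the blocks as $B_0,\dots,B_{r-1}$ around $C_r$, the bipartite graph between consecutive blocks is $p$-regular on each side; declaring $u\sim u'$ in $B_i$ when they have the same neighbourhood in $B_{i+1}$ yields classes of size $p$ and a ``forget the oldest coordinate'' map $B_i\to\mathbb{Z}_p^{s-1}$, with a symmetric map in the backward direction. Tracking these maps around the cycle lets me coordinatise $B_i\cong\mathbb{Z}_p^s$ so that a vertex labelled $(x_0,\dots,x_{s-1})$ is adjacent precisely to the $p$ vertices $(x_1,\dots,x_{s-1},y)$ of $B_{i+1}$, which is exactly the adjacency of $\operatorname{C}(p,r,s)$; the bound $r\ge s+1$ emerges here, since a shorter cycle would make the window wrap around itself and force identifications contradicting simplicity. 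I expect this last step — converting the abstract symmetric quotient structure into explicit de Bruijn/shift-register coordinates and pinning down $r\ge\max\{3,s+1\}$ — to be the main obstacle, with the exclusion of the two-block degeneracy the secondary technical hurdle.
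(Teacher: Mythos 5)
The paper offers no proof of this statement: it is imported verbatim as Theorem~1 of Praeger and Xu's 1989 paper, so there is no internal argument to compare yours against. Judged on its own terms, your outline follows the expected normal-quotient strategy, and the steps you actually carry out are sound: the reduction to $M$ elementary abelian via $\{x\in M: x^p=1\}$, the identification of $M_v$ with the kernel of $M$ acting on the block $v^M$, the orbit count showing $M_v$ has exactly two orbits of length $p$ on $\Gamma(v)$, and the connectivity argument excluding orbit length $1$ are all correct.

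However, the two places you flag as ``requiring separate care'' and ``the main obstacle'' are precisely the content of the theorem, and you do not supply them, so this is a plan rather than a proof. Concretely: first, you must show that the two $M_v$-orbits on $\Gamma(v)$ lie in two distinct blocks, neither equal to $v^M$. The sub-case of edges inside a block is actually cheap --- if any edge lies inside a block then, since $G$ preserves the block system and is edge-transitive, all edges do, so connectivity forces $M$ to be transitive, and a faithful transitive abelian group is regular, contradicting non-semiregularity --- but you do not give even this, and the genuinely delicate sub-case, in which all of $\Gamma(v)$ falls into a single other block and $\Gamma$ collapses to a two-block bipartite configuration, is left entirely open. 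Second, the recoordinatization of the blocks $B_0,\dots,B_{r-1}$ as copies of $\mathbb{Z}_p^s$ compatibly around the whole cycle, so that adjacency becomes the shift-register rule, is asserted rather than executed; the real work is checking that your ``same neighbourhood in $B_{i+1}$'' equivalences compose consistently after a full circuit of the cycle. Finally, the bound $r\ge s+1$ is most cleanly obtained not from simplicity but from the fact, quoted in Section~2 of this paper, that $\operatorname{C}(p,r,s)$ is arc-transitive if and only if $r\ge s+1$: once $\Gamma\cong\operatorname{C}(p,r,s)$ is established, the assumed arc-transitivity of $\Gamma$ forces the inequality.
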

	
	\subsection{Rotary maps}\label{sec:arc-regular-maps}
	Let $\mathcal{M}=(V,E,F)$ be a map and let $G\le\Aut(\mathcal{M})$.
	
	Then $\mathcal{M}$ is \textit{$G$-arc-transitive} (respectively, $G$-arc-regular) if $G$ acts transitively (respectively, regularly) on the arc set of $\mathcal{M}$. 
	Observe that, for any map, the stabilizers of vertices, edges, and faces in the automorphism group are cyclic or dihedral~\cite{Auto-ETmaps}. 
	Using the definitions from Li, Praeger, and Song \cite{LiPraegerS-A}, a map 
	$\mathcal{M}$ is 
	\begin{enumerate}[{\rm (i)}]
		\item \emph{$G$-vertex-rotary} if  $\mathcal{M}$ is $G$-arc-regular and the vertex stabilizers $G_v$ are cyclic for all $v\in V$.\vskip0.1in
  
		\item \emph{$G$-vertex-reversing} if it is $G$-arc-regular and the vertex stabilizers $G_v$ are dihedral for all $v\in V$.\vskip0.1in
  
		\item \emph{$G$-rotary} if it is $G$-vertex-rotary and the face stabilizers $G_f$ are cyclic for all $f\in F$.
	\end{enumerate}
	According to \cite[Definition~1.5]{LiPraegerS-A}, if $\mathcal{M}$ is $G$-rotary, then $\mathcal{M}$ is also rotary, and the converse is true as well (consider $G$ to be the orientation-preserving automorphism group, see~\cite[Section 2]{orireg-multiK_n}).


	Let $G$ be a group and let $(\rho,\tau)\in G\times G$ be a rotary pair of $G$.
	Recall that a rotary pair satisfies $G=\langle\rho,\tau\rangle$ and $|\tau|=2$.
	We construct an incidence configuration $\RM(G, \rho, \tau)$ as follows:
	\[  \mbox{vertex set $[G : \langle \rho \rangle]$, edge set $[G : \langle \tau \rangle]$ and face set $[G : \langle \rho\tau \rangle]$, }
	\]
	where two objects are incident if and only if their set intersection is non-empty. 
	Then $\RM(G,\rho,\tau)$ always yields a $G$-rotary map \cite[Construction~4.2]{LiPraegerS-A}.
	Now let $\mathcal{M}$ be a $G$-rotary map with a vertex $v$ and an edge $e$. 
	Let $\rho$ and $\tau$ be generators of the stabilizers $G_v$ and $G_e$, respectively. 
	Then by \cite{LiPraegerS-A}, $G = \langle \rho, \tau \rangle$ and $|\tau|=2$, hence $(\rho,\tau)$ is a rotary pair of $G$.
	As established in \cite[Construction~4.2]{LiPraegerS-A}, $\RM(G, \rho, \tau)$ is a $G$-rotary map isomorphic to $\mathcal{M}$.

	It is worth noting that, given a rotary pair $(\rho, \tau)$ of $G$, one can also form the incidence configuration $\BiRM(G,\rho,\tau)$, which shares the same underlying graph \cite[Constructon 4.4]{LiPraegerS-A}.

	As with $\RM(G,\rho,\tau)$, $\BiRM(G,\rho,\tau)$ gives rise to a family of $G$-arc-transitive maps, called \emph{$G$-bi-rotary} maps, which are precisely the $G$-vertex-rotary maps whose face stabilizers in $G$ are dihedral \cite[Proposition~4.5(b)]{LiPraegerS-A}.

        We say that a  map $\mathcal{M}$ is a \emph{PX map} if the underlying graph is a PX graph. 
	Consequently, classifying $G$-rotary PX maps  simultaneously classifies $G$-bi-rotary PX maps, thereby yielding a complete classification of all $G$-vertex-rotary PX maps.


	In Section \ref{sec}, we will prove that every rotary PX map is isomorphic to a direct product of its quotient maps, each of which remains a rotary PX map. 
	This reduces the classification problem to classifying irreducible rotary PX maps, see Definition~\ref{def:irr-PXmap}.
	Now, we formally define quotient rotary maps and direct products.
	\begin{definition}\label{def:quot-RotaMap}
		Let $\mathcal{M} = \RM(G, \rho, \tau)$. 
		For a normal subgroup $M \trianglelefteq G$ with $\rho , \tau \notin M$, 
		the map $\RM(G/M, \rho M, \tau M)$ is the \emph{quotient map} of $\mathcal{M}$~\cite{alge-quotient}, denoted $\mathcal{M}/M$.
		
		For $n$ rotary maps $\mathcal{M}_i = \RM(G_i, \rho_i, \tau_i)$ ($i = 1, \dots, n$), 
		let $H = \langle (\rho_1,\dots,\rho_n), (\tau_1,\dots,\tau_n) \rangle \leq \prod_{i=1}^n G_i$. 
		The \emph{direct product} $\prod_{i=1}^n \mathcal{M}_i$ is defined as 
		$\RM\left( H, (\rho_1,\dots,\rho_n), (\tau_1,\dots,\tau_n) \right)$.
	\end{definition}
		The following proposition establishes a criterion for isomorphism between rotary maps via their rotary pairs.
        \begin{proposition}\label{prop:iso-RotaMap}
            Two maps $\RM(G,\rho,\tau)$ and $\RM(H,\rho',\tau')$ are isomorphic if there is a group isomorphism $f:G\to H$ such that $f(\rho)=\rho'$ and $f(\tau)=\tau'$.
        \end{proposition}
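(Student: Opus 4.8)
The plan is to show that the group isomorphism $f$ descends to a type-preserving, incidence-preserving bijection between the two incidence configurations, which is exactly what it means for the two maps to be isomorphic. The argument is the functoriality of the coset-map construction in its generating pair, so I do not expect any genuine obstacle; the only points requiring care are the behaviour of $f$ on the face-stabilizer subgroup and the well-definedness of the induced maps on cosets.

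First I would record the effect of $f$ on the three relevant cyclic subgroups. Since $f$ is a homomorphism with $f(\rho)=\rho'$ and $f(\tau)=\tau'$, we immediately obtain $f(\rho\tau)=f(\rho)f(\tau)=\rho'\tau'$, and hence
\[
f(\langle\rho\rangle)=\langle\rho'\rangle,\qquad f(\langle\tau\rangle)=\langle\tau'\rangle,\qquad f(\langle\rho\tau\rangle)=\langle\rho'\tau'\rangle .
\]
Because $f$ carries each of these subgroups isomorphically onto the corresponding subgroup of $H$, the assignments
\[
g\langle\rho\rangle\mapsto f(g)\langle\rho'\rangle,\qquad g\langle\tau\rangle\mapsto f(g)\langle\tau'\rangle,\qquad g\langle\rho\tau\rangle\mapsto f(g)\langle\rho'\tau'\rangle
\]
are well defined on the coset spaces: if $g\langle\rho\rangle=h\langle\rho\rangle$ then $g^{-1}h\in\langle\rho\rangle$, so $f(g)^{-1}f(h)=f(g^{-1}h)\in\langle\rho'\rangle$, giving $f(g)\langle\rho'\rangle=f(h)\langle\rho'\rangle$; the same argument applies to the edge and face types. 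Each assignment is a bijection, with inverse induced in the same way by $f^{-1}$. Thus $f$ yields a bijection from the vertices (resp. edges, faces) of $\RM(G,\rho,\tau)$ to those of $\RM(H,\rho',\tau')$.

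Next I would verify that these three bijections preserve the incidence relation. Incidence in $\RM(G,\rho,\tau)$ is defined by non-empty intersection of cosets, and since $f$ is a bijection of the underlying set $G$ onto $H$ it commutes with intersection, that is $f(A\cap B)=f(A)\cap f(B)$ for all $A,B\subseteq G$. Applying this to a vertex coset $g_1\langle\rho\rangle$ and an edge coset $g_2\langle\tau\rangle$ gives
\[
f\bigl(g_1\langle\rho\rangle\cap g_2\langle\tau\rangle\bigr)=f(g_1)\langle\rho'\rangle\cap f(g_2)\langle\tau'\rangle ,
\]
so the left-hand intersection is non-empty precisely when the right-hand one is; the identical computation handles vertex--face and edge--face incidences. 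Consequently the triple of bijections is an isomorphism of the incidence configurations respecting the vertex/edge/face types, which is by definition an isomorphism $\RM(G,\rho,\tau)\cong\RM(H,\rho',\tau')$.

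Finally I would note that the content is entirely formal: the substantive direction of the correspondence between rotary maps and rotary pairs (that an isomorphism of maps \emph{produces} such an $f$, via identifying $G$ and $H$ with the orientation-preserving automorphism groups as in \cite{LiPraegerS-A}) is not claimed here, and it is precisely the absence of that converse that makes the present statement a clean sufficient condition that will be convenient to invoke repeatedly in the classification.
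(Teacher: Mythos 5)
Your argument is correct and is exactly the routine functoriality argument that the paper implicitly relies on: the paper states this proposition without proof, treating it as immediate from the coset-map construction of \cite{LiPraegerS-A}. Your verification that $f$ induces well-defined, incidence-preserving bijections on the three coset spaces (using injectivity of $f$ so that $f(A\cap B)=f(A)\cap f(B)$) is precisely the standard justification, so there is nothing to compare beyond noting that you have supplied the details the authors omitted.
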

	This property naturally extends to homomorphisms: 
	\begin{proposition}\label{lem:quotient-maps}
		For  $i\in\{1,2\}$, let $\mathcal{M}_i=\RM(G_i,\rho_i,\tau_i)$.
		Then $\mathcal{M}_2$ is isomorphic to a quotient map of $\mathcal{M}_1$ if and only if there exists a homomorphism $\psi:G_1\rightarrow G_2$ such that $\psi(\rho_1)=\rho_2$ and $\psi(\tau_1)=\tau_2$.
	\end{proposition}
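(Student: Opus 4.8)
The plan is to prove both implications by identifying the normal subgroup $M$ in the quotient construction with the kernel of $\psi$, and then passing between map isomorphisms and generator-preserving group isomorphisms via Proposition \ref{prop:iso-RotaMap} and the rotary-pair correspondence of \cite{LiPraegerS-A}.

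For the backward direction, I would suppose a homomorphism $\psi:G_1\to G_2$ with $\psi(\rho_1)=\rho_2$ and $\psi(\tau_1)=\tau_2$ is given. First I would note that $\psi$ is surjective: since $(\rho_2,\tau_2)$ is a rotary pair, $G_2=\langle\rho_2,\tau_2\rangle=\langle\psi(\rho_1),\psi(\tau_1)\rangle=\psi(\langle\rho_1,\tau_1\rangle)=\psi(G_1)$. Setting $M=\ker\psi\trianglelefteq G_1$, the first isomorphism theorem supplies an isomorphism $\bar\psi:G_1/M\to G_2$ with $\bar\psi(\rho_1M)=\rho_2$ and $\bar\psi(\tau_1M)=\tau_2$. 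To see that $\mathcal{M}_1/M$ is actually defined I would check $\rho_1,\tau_1\notin M$: this holds because $\rho_2,\tau_2$ are nontrivial ($|\tau_2|=2$, and $\rho_2\neq 1$ since it generates a nontrivial vertex stabilizer), so neither $\rho_1$ nor $\tau_1$ lies in $\ker\psi$. Hence $\mathcal{M}_1/M=\RM(G_1/M,\rho_1M,\tau_1M)$ is a genuine quotient map, and applying Proposition \ref{prop:iso-RotaMap} to the isomorphism $\bar\psi$ gives $\mathcal{M}_1/M\cong\mathcal{M}_2$, as required.

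For the forward direction, I would suppose $\mathcal{M}_2$ is isomorphic to a quotient map $\mathcal{M}_1/M=\RM(G_1/M,\rho_1M,\tau_1M)$ for some $M\trianglelefteq G_1$ with $\rho_1,\tau_1\notin M$. The canonical projection $q:G_1\to G_1/M$ is a homomorphism with $q(\rho_1)=\rho_1M$ and $q(\tau_1)=\tau_1M$, so it suffices to produce a group isomorphism $f:G_1/M\to G_2$ with $f(\rho_1M)=\rho_2$ and $f(\tau_1M)=\tau_2$; then $\psi=f\circ q$ is the desired homomorphism. Such an $f$ is precisely a realization, at the level of automorphism groups, of the map isomorphism $\mathcal{M}_1/M\cong\mathcal{M}_2$: under the correspondence between rotary maps and rotary pairs of \cite{LiPraegerS-A}, the group $G$ of $\RM(G,\rho,\tau)$ is recovered as the orientation-preserving automorphism group acting regularly on arcs, with $\rho,\tau$ recovered as generators of a vertex- and an edge-stabilizer relative to a base flag, and a map isomorphism transports this regular action and these stabilizers accordingly.

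The main obstacle is this last step of the forward direction, namely the converse of Proposition \ref{prop:iso-RotaMap}: that a combinatorial isomorphism of rotary maps is induced by a \emph{generator-preserving} group isomorphism. The delicate point here is chirality --- a priori an isomorphism might carry $\rho$ to $\rho^{-1}$ rather than to $\rho_2$. I would handle this by working with the notion of isomorphism fixed by the rotary-pair correspondence of \cite{LiPraegerS-A}, under which an isomorphism of the rotary maps is exactly an isomorphism of their automorphism groups carrying the distinguished rotary pair of one to that of the other, and by using the regularity of the arc action to move any chosen base flag to the base flag so that the induced conjugation isomorphism matches $\rho_1M\mapsto\rho_2$ and $\tau_1M\mapsto\tau_2$ on the nose.
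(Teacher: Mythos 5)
Your proof is correct and is essentially the argument the paper intends (the paper states this proposition without proof, as a direct extension of Proposition~\ref{prop:iso-RotaMap}): the backward direction via the first isomorphism theorem applied to $M=\ker\psi$, and the forward direction by composing the canonical projection with a generator-preserving isomorphism realizing $\mathcal{M}_1/M\cong\mathcal{M}_2$. The one point you flag --- that a map isomorphism yields a group isomorphism carrying $(\rho_1M,\tau_1M)$ to $(\rho_2,\tau_2)$ exactly, i.e.\ the converse of Proposition~\ref{prop:iso-RotaMap} --- is indeed the paper's working convention (it is used identically in the proof of Lemma~\ref{lem:quotient-map} and in the identification of isomorphism classes with $\Aut(G)$-orbits on rotary pairs), so your treatment is consistent with the paper.
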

	It is straightforward to check that direct products satisfy the commutative law and the associative law.
	The lemma below plays an important role in Section~\ref{sec}.
	\begin{lemma}\label{lem:direct-product-rotary-map}
		Let $\mathcal{M}=\RM(G,\rho,\tau)$ and let $V_1,\ldots,V_n$ be normal subgroups of $G$ with $\bigcap_iV_i=1$. 
		Then $\mathcal{M}\cong \prod_{i=1}^n\mathcal{M}/V_i$.
	\end{lemma}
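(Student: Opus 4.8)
The plan is to produce an explicit group isomorphism identifying the two rotary pairs and then invoke Proposition~\ref{prop:iso-RotaMap}. Write $Q_i := G/V_i$, so that $\mathcal{M}/V_i = \RM(Q_i, \rho V_i, \tau V_i)$. By Definition~\ref{def:quot-RotaMap}, the direct product $\prod_{i=1}^n \mathcal{M}/V_i$ is $\RM(H, \bar\rho, \bar\tau)$, where $\bar\rho = (\rho V_1, \ldots, \rho V_n)$, $\bar\tau = (\tau V_1, \ldots, \tau V_n)$, and $H = \langle \bar\rho, \bar\tau\rangle \le \prod_{i=1}^n Q_i$. It therefore suffices to construct an isomorphism $f\colon G \to H$ with $f(\rho) = \bar\rho$ and $f(\tau) = \bar\tau$.

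First I would take $f$ to be the diagonal homomorphism $G \to \prod_{i=1}^n Q_i$, $g \mapsto (gV_1, \ldots, gV_n)$, which is a well-defined group homomorphism since each $V_i \trianglelefteq G$. Its kernel is precisely $\bigcap_{i=1}^n V_i = 1$, so $f$ is injective; this is where the hypothesis $\bigcap_i V_i = 1$ enters. Next I would identify the image: because $G = \langle \rho, \tau\rangle$ and any homomorphism carries a generating set of the source onto a generating set of the image, we get $f(G) = \langle f(\rho), f(\tau)\rangle = \langle \bar\rho, \bar\tau\rangle = H$. Hence $f$ restricts to an isomorphism $G \to H$ sending $\rho \mapsto \bar\rho$ and $\tau \mapsto \bar\tau$, and Proposition~\ref{prop:iso-RotaMap} then yields $\mathcal{M} \cong \prod_{i=1}^n \mathcal{M}/V_i$.

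The isomorphism argument itself is routine; the only point warranting care is the well-definedness of the constituent objects. In particular one should check that $(\bar\rho, \bar\tau)$ is genuinely a rotary pair of $H$, i.e.\ that $\bar\tau$ is an involution. Here $\bar\tau^2 = (\tau^2 V_1, \ldots, \tau^2 V_n)$ is the identity of $H$ since $|\tau| = 2$, while $\bar\tau \neq 1$ because $\tau \neq 1$ together with $\bigcap_i V_i = 1$ forces $\tau \notin V_i$ for some $i$, making that coordinate nontrivial. Thus $|\bar\tau| = 2$, and no degeneracy occurs. I expect this bookkeeping, rather than any substantive difficulty, to be the main (and minor) obstacle; the conceptual heart is simply the diagonal embedding followed by the transfer principle of Proposition~\ref{prop:iso-RotaMap}.
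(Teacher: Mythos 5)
Your proposal is correct and follows essentially the same route as the paper: both use the diagonal homomorphism $g\mapsto(gV_1,\ldots,gV_n)$, note that its kernel is $\bigcap_i V_i=1$ so it is injective, identify its image with $\langle\bar\rho,\bar\tau\rangle$, and conclude via the isomorphism criterion for rotary maps. The extra check that $(\bar\rho,\bar\tau)$ is a genuine rotary pair is a reasonable addition that the paper leaves implicit.
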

	
	\begin{proof}
		Define a function $\Lambda:G\rightarrow \prod_{i=1}^n G/V_i$ by $g\mapsto (gV_1,\ldots,gV_n)$. 
		It is straightforward to check that $\Lambda$ is a homomorphism.
		Since $\bigcap_i V_i = 1$, $\Lambda$ is injective and is thus an isomorphism $G\cong\Lambda(G)$.
		
		Since $\Lambda(G)=\l \Lambda(\rho),\Lambda(\tau) \r$, we have that 
		\[\prod_{i=1}^n\mathcal{M}/V_i=\RM(\Lambda(G),\Lambda(\rho),\Lambda(\tau)).\] 
		It follows from $\Lambda(G)\cong G$ that $\mathcal{M}$ is isomorphic to $\prod_{i=1}^n\mathcal{M}/V_i$.
	\end{proof}

	Since each rotary pair generates $G$, $\Aut(G)$  acts semiregularly on the set of rotary pairs of $G$.
	It follows that the number of isomorphism classes of $G$-rotary maps equals the number of $\Aut(G)$-orbits on the set of rotary pairs of $G$.
	In this work, maps are considered up to isomorphism. 
	Consequently, the set of $G$-rotary maps is in bijection with the set of $\mathrm{Aut}(G)$-orbits on rotary pairs of $G$, and thus their enumerations coincide.
	The above discussion is foundational to enumerating rotary PX maps in Section~\ref{sec:irr-PX-map}.

	\subsection{Representation theory of dihedral groups}
	In this subsection, we give some fundamental properties of irreducible representations of dihedral groups over $\mathbb{F}_p$. For completeness, we include the proofs in Appendix~\ref{sec:appendix}.
 
	Denote by $\operatorname{Irr}(\operatorname{D}_{2r})$ the set of 
	isomorphism classes of irreducible representations of $\operatorname{D}_{2r}$ over $\mathbb{F}_p$.
	The automorphism group of $\operatorname{D}_{2r}$ has a natural action on $\operatorname{Irr}(\operatorname{D}_{2r})$, which is called the \emph{$\Aut(\D_{2r})$-conjugation} of irreducible representations of $\D_{2r}$.
	Given a representation $\psi\in\operatorname{Irr}(\D_{2r})$, we denote the $\Aut(\D_{2r})$-orbit of $\psi$ by $\psi^{\Aut(\D_{2r})}$ (see \cite[Chapter 4.1]{FinGrp-Gor}).
	We often use the language of module theory to describe representations.

	\begin{definition}
		Let $D=\operatorname{D}_{2r}$ be a dihedral group and let $F$ be a field.
		Write $D=\langle a,b\rangle$ where $a,b\in D$ are involutions.
		Let $1$ denote the unit element of $F$.
		Define a representation $\gamma_{-1,-1}:D\rightarrow F^\times$ of degree $1$ by $\gamma_{-1,-1}(a)=-1$ and $\gamma_{-1,-1}(b)=-1$. Let $\gamma_{1,1}$ be the trivial representation.
		If $r$ is even, then define a representation $\gamma_{1,-1}:D\rightarrow F^\times$ of degree $1$ by $\gamma_{1,-1}(a)=1$ and $\gamma_{1,-1}(b)=-1$
		and a representation $\gamma_{-1,1}:D\rightarrow F^\times$ of degree $1$ by $\gamma_{-1,1}(a)=-1$ and $\gamma_{-1,1}(b)=1$.
	\end{definition}

	We use the notation $E_\lambda(A)$ to denote the eigenspace of a linear operator $A$ corresponding to the eigenvalue $\lambda$.
	Let $\psi:G\rightarrow \operatorname{GL}(V)$ be a representation of group $G$ and let $g\in G$.
	For convenience, we usually write $E_\lambda(g)$ rather than $E_\lambda(\psi(g))$ if the context is clear.
	\begin{lemma}\label{lem:irreducible-representation-dihedral}
		Let $D=D_{2r}$ and
		let $z\in D$ be an involution which does not lie in the center of $D$.
		Let $\psi:D\rightarrow\operatorname{GL}(V)$ be an irreducible representation of $D$ of degree $d$ over $\mathbb{F}_p$.
		\begin{enumerate}[{\rm (i)}]
			\item  If $d=1$ and $r$ is odd, then $\psi\in\{\gamma_{1,1},\gamma_{-1,-1}\}$.\vskip0.1in
   
			\item  If $d=1$ and $r$ is even, then $\psi\in\{\gamma_{1,1},\gamma_{-1,-1},\gamma_{1,-1},\gamma_{-1,1}\}$.\vskip0.1in
   
			\item  If $d\ge 2$, then $d$ is even and $\dim(E_{1}(z))=\dim(E_{-1}(z))=d/2$.
		\end{enumerate}
	\end{lemma}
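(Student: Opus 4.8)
The plan is to split the statement into the degree-one case (parts (i) and (ii)) and the case $d\ge 2$ (part (iii)). Throughout I would record the standing hypotheses: since $p$ is odd and $p\nmid r$, we have $p\nmid 2r=|\D_{2r}|$, so $\mathbb{F}_p[\D_{2r}]$ is semisimple by Maschke's theorem; I set $c=ab$, a rotation of order $r$ generating the cyclic index-$2$ subgroup $C=\langle c\rangle$, and recall that the given involution $z$, being non-central, is a reflection, so $zcz^{-1}=c^{-1}$ and $\langle c,z\rangle=\D_{2r}$.

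For (i) and (ii), a representation of degree $1$ is a homomorphism $\D_{2r}\to\mathbb{F}_p^\times$ and hence factors through the abelianization $\D_{2r}/[\D_{2r},\D_{2r}]$. Using $[\D_{2r},\D_{2r}]=\langle c^2\rangle$, I would identify this quotient with $C_2$ when $r$ is odd and with $C_2\times C_2$ when $r$ is even, tracking the images of $a$ and $b$: when $r$ is odd the images of $a$ and $b$ coincide, while when $r$ is even they form an independent pair of generators. Reading off the resulting sign characters then yields exactly $\{\gamma_{1,1},\gamma_{-1,-1}\}$ in the odd case and $\{\gamma_{1,1},\gamma_{-1,-1},\gamma_{1,-1},\gamma_{-1,1}\}$ in the even case. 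The only thing to verify is that all the relevant values lie already in $\mathbb{F}_p$ (true since $p$ is odd), so no field extension enlarges this list.

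For (iii), I would first observe that since $z^2=1$ and $p\neq 2$, the minimal polynomial of $\psi(z)$ divides $(t-1)(t+1)$, which has distinct roots, so $\psi(z)$ is diagonalizable with $V=E_1(z)\oplus E_{-1}(z)$ and $\dim E_1(z)-\dim E_{-1}(z)=\tr\psi(z)$. Thus the whole statement reduces to proving $\tr\psi(z)=0$, since that forces the two eigenspaces to have equal dimension $d/2$ and hence $d$ to be even. To establish the vanishing of the trace I would pass to $\overline{\mathbb{F}}_p$ and decompose into eigenspaces of $\psi(c)$: because $c^r=1$ with $p\nmid r$, the operator $\psi(c)$ is diagonalizable with $r$-th roots of unity as eigenvalues, and the relation $zcz^{-1}=c^{-1}$ forces $\psi(z)$ to carry $E_\lambda(c)$ onto $E_{\lambda^{-1}}(c)$. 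Consequently only the self-paired eigenspaces $E_1(c)$ and $E_{-1}(c)$ can contribute to $\tr\psi(z)$. I would then show both vanish: each of $\ker(\psi(c)-I)$ and $\ker(\psi(c)+I)$ is stable under $z$ (by $zcz^{-1}=c^{-1}$) and under $c$, hence under $\D_{2r}=\langle c,z\rangle$, so by irreducibility each is $0$ or $V$; but $E_1(c)=V$ gives $\psi(c)=I$ and $E_{-1}(c)=V$ gives $\psi(c)=-I$, either of which makes $\psi(\D_{2r})$ abelian with all eigenvalues in $\mathbb{F}_p$, forcing $\deg\psi=1$ and contradicting $d\ge 2$. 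Hence $E_1(c)=E_{-1}(c)=0$, every eigenvalue $\lambda$ of $\psi(c)$ satisfies $\lambda\neq\lambda^{-1}$, the eigenspaces pair off into $2\times 2$ off-diagonal blocks for $\psi(z)$, and $\tr\psi(z)=0$.

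I expect the main obstacle to be making the eigenspace-pairing argument rigorous: one must pass to $\overline{\mathbb{F}}_p$ (justified because trace is invariant under base change) and, crucially, exclude the eigenvalues $\pm 1$ of $\psi(c)$ using irreducibility. This last step is exactly where the non-centrality of $z$ is indispensable, since it is what guarantees $\langle c,z\rangle=\D_{2r}$ and hence that the $c$- and $z$-invariant subspaces $E_{\pm 1}(c)$ are $\D_{2r}$-submodules.
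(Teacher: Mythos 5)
Your argument is correct, and for parts (i) and (ii) it coincides with the paper's: a degree-one representation factors through the abelianization $D/\langle c^2\rangle$, which is $\mathbb{Z}_2$ for odd $r$ and $\mathbb{Z}_2^2$ for even $r$, and the values are $\pm 1$. For part (iii), however, you take a genuinely different route. The paper never passes to $\overline{\mathbb{F}}_p$ and never mentions the trace: it picks a second non-central involution $z_1$ with $D=\langle z,z_1\rangle$, observes that a nonzero vector in any of the four intersections $E_{\pm 1}(z)\cap E_{\pm 1}(z_1)$ would span a $D$-invariant line (contradicting $d\ge 2$), and then a short dimension count --- $\dim E_{\pm 1}(z_1)\le\min(\dim E_1(z),\dim E_{-1}(z))$ combined with $E_1(z_1)\oplus E_{-1}(z_1)=V$ --- forces $\dim E_1(z)=\dim E_{-1}(z)=d/2$. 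Your reduction to $\tr\psi(z)=0$, via the pairing $\psi(z)E_\lambda(c)=E_{\lambda^{-1}}(c)$ over $\overline{\mathbb{F}}_p$ and the exclusion of $\lambda=\pm 1$ by irreducibility, is the standard character-theoretic argument; it buys you the trace as a byproduct and generalizes readily, at the cost of a base change whose interaction with irreducibility you must watch: the hypothesis is irreducibility over $\mathbb{F}_p$, not over $\overline{\mathbb{F}}_p$, so the claim ``$E_{\pm 1}(c)\in\{0,V\}$'' should be applied to the eigenspaces computed over $\mathbb{F}_p$ (harmless here, since $\pm 1\in\mathbb{F}_p$ and eigenspace dimensions are preserved under base change, but it is exactly the kind of step that would fail for a non-rational eigenvalue). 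The paper's argument is the more elementary of the two, staying entirely over $\mathbb{F}_p$ with pure linear algebra.
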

	
	\begin{lemma}\label{lem:module-homomorphism}
		Let $D=\operatorname{D}_{2r}=\langle c\rangle\rtimes\langle b\rangle$ and let $M$ be a simple $D$-module over the finite field $\mathbb{F}_p$ of even dimension $d$.
		
		Then
		\begin{enumerate}[{\rm (i)}]
			
			\item $\operatorname{End}_D(M)\cong \mathbb{F}_{p^{d/2}}$; in particular, $(\operatorname{End}_D(M))^\times\cong \mathbb{Z}_{p^{d/2}-1}$;\vskip0.1in
   
			\item for every $0\ne m,n\in E_1(b)$, there exists $f\in \operatorname{End}_D(M)$ such that $f(m)=n$.  
		\end{enumerate}
	\end{lemma}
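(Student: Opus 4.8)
\noindent
The plan is to establish (i) first and then deduce (ii) quickly from it together with Lemma~\ref{lem:irreducible-representation-dihedral}. Write $E:=\operatorname{End}_D(M)$, and note that $M$ is a simple module over the group algebra $\mathbb{F}_p[D]$, which is semisimple since $p\nmid 2r=|D|$. By Schur's lemma $E$ is a finite-dimensional division algebra over $\mathbb{F}_p$, and since every finite division ring is a field (Wedderburn), $E\cong\mathbb{F}_{p^t}$ for some $t$. Thus for (i) it remains only to prove $t=d/2$.

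I would pin down $t$ by extending scalars to $\overline{\mathbb{F}}_p$. Using that $\operatorname{Hom}$ of finite-dimensional modules commutes with flat base change,
\[
E\otimes_{\mathbb{F}_p}\overline{\mathbb{F}}_p\cong\operatorname{End}_{\overline{\mathbb{F}}_p[D]}\bigl(M\otimes_{\mathbb{F}_p}\overline{\mathbb{F}}_p\bigr).
\]
The left-hand side is $\mathbb{F}_{p^t}\otimes_{\mathbb{F}_p}\overline{\mathbb{F}}_p\cong\overline{\mathbb{F}}_p^{\,t}$, a commutative algebra with exactly $t$ simple factors. Writing $M\otimes\overline{\mathbb{F}}_p=\bigoplus_j m_jW_j$ with the $W_j$ pairwise non-isomorphic absolutely irreducible $\overline{\mathbb{F}}_p[D]$-modules, the right-hand side is $\prod_j\operatorname{Mat}_{m_j}(\overline{\mathbb{F}}_p)$; commutativity forces every $m_j=1$, so $M\otimes\overline{\mathbb{F}}_p$ is multiplicity-free with exactly $t$ constituents. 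Now the absolutely irreducible representations of $\operatorname{D}_{2r}$ have degree $1$ or $2$, and the degree-$1$ ones (where $c$ acts as a scalar $\pm1$) are all defined over $\mathbb{F}_p$, hence fixed by $\operatorname{Gal}(\overline{\mathbb{F}}_p/\mathbb{F}_p)$. Since this Galois group permutes the constituents transitively (as $M$ is $\mathbb{F}_p$-irreducible), a single $1$-dimensional constituent would force $M\otimes\overline{\mathbb{F}}_p$ to equal that one character, contradicting $d\ge 2$. Hence every $W_j$ has degree $2$, so $d=\sum_j\dim W_j=2t$, giving $t=d/2$ and $E\cong\mathbb{F}_{p^{d/2}}$. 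The ``in particular'' is then immediate, as the unit group of $\mathbb{F}_{p^{d/2}}$ is cyclic of order $p^{d/2}-1$.

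For (ii), the action of $E$ makes $M$ a vector space over the field $E=\mathbb{F}_{p^{d/2}}$, and comparing $\mathbb{F}_p$-dimensions yields $\dim_E M=d/(d/2)=2$. Because each $f\in E$ commutes with the action of $b$, it sends $E_1(b)$ into itself, so $E_1(b)$ is an $E$-subspace of $M$; applying Lemma~\ref{lem:irreducible-representation-dihedral}(iii) to the non-central involution $z=b$ gives $\dim_{\mathbb{F}_p}E_1(b)=d/2$, whence $\dim_E E_1(b)=1$. Therefore $E^\times$ acts transitively on the nonzero vectors of the one-dimensional $E$-space $E_1(b)$: for nonzero $m,n\in E_1(b)$ there is $\mu\in E^\times$ with $n=\mu m$, and this $\mu\in\operatorname{End}_D(M)$ is the required endomorphism.

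I expect the main obstacle to be the dimension bookkeeping in (i): justifying via extension of scalars that $\dim_{\mathbb{F}_p}E$ equals the number of absolutely irreducible constituents, and verifying that each constituent is genuinely $2$-dimensional (the only delicate point, handled above through Galois transitivity). Once $E\cong\mathbb{F}_{p^{d/2}}$ is secured, part (ii) reduces to the short dimension count built on the equality $\dim_{\mathbb{F}_p}E_1(b)=d/2$ supplied by the preceding lemma.
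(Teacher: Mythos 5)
Your proof is correct, but it reaches part (i) by a genuinely different route from the paper. The paper works entirely inside the module: it makes $M_1=\operatorname{End}_{\langle c\rangle}(M)$ into a $D$-module, exhibits an explicit $D$-isomorphism $M_1\cong M$ by evaluation at a fixed $m\in E_1(b)$, identifies $\operatorname{End}_D(M)$ with $\operatorname{C}_{M_1}(b)$, and reads off $\dim_{\mathbb{F}_p}\operatorname{End}_D(M)=d/2$ from Lemma~\ref{lem:irreducible-representation-dihedral}(iii) before invoking Wedderburn; part (ii) is then the surjectivity of the same evaluation map, forced by equality of dimensions. You instead invoke Schur plus Wedderburn at the outset, extend scalars to $\overline{\mathbb{F}}_p$, and use Galois transitivity on the (multiplicity-free) absolutely irreducible constituents together with the fact that all degree-one characters of $\operatorname{D}_{2r}$ are realized over $\mathbb{F}_p$ to conclude every constituent has degree $2$, whence $t=d/2$. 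Your argument is more conceptual and imports standard Galois-descent machinery, at the cost of several facts (base change of $\operatorname{Hom}$, the structure of $\operatorname{End}$ of a semisimple module, Galois transitivity on constituents) that the paper's self-contained computation avoids; in exchange, your treatment of (ii) is arguably cleaner, since viewing $E_1(b)$ as a one-dimensional $\operatorname{End}_D(M)$-line makes the transitivity of $\operatorname{End}_D(M)^\times$ immediate, whereas the paper re-runs an injectivity-plus-dimension-count argument. Both proofs rely on the same input from Lemma~\ref{lem:irreducible-representation-dihedral}(iii), namely $\dim E_1(b)=d/2$, and both are valid under the standing hypotheses ($p$ odd, $p\nmid r$, so $\mathbb{F}_p[D]$ is semisimple and $b$ is a non-central involution).
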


	Let $C=\mathbb{Z}_r$ be the cyclic subgroup generated by $c$.
	Let $f(x)\in\mathbb{F}_p[x]$ be an irreducible polynomial such that $f(x)\mid x^r-1$.
	The map $C\rightarrow \mathbb{F}_p[x]/(f(x)), c\mapsto x$ induces an irreducible representation of $C$ on $\mathbb{F}_p[x]/(f(x))$
	by multiplication.
	It is well known that this gives a bijection between irreducible
	representations of $C$ and irreducible factors of $x^r-1$ (see, for example, \cite[Chapter 6]{vanlint1999Introduction}).
	Using this fact, we show that the faithful irreducible representations of $\operatorname{D}_{2r}$
	share the same degree. 
	
	\begin{lemma}\label{lem:faith-rep}
		Let $D=\operatorname{D}_{2r}=\langle c\rangle\rtimes\langle b\rangle$. Let $C=\langle c\rangle$ and let $d$ be the degree of a faithful irreducible representation of $C$ over $\mathbb{F}_p$. 
		Then $d$ is the least positive integer such that $r\mid p^d-1$.
            If $\psi$ is a faithful irreducible representation of $D$ over $\mathbb{F}_p$,
            then \[\operatorname{deg}(\psi)=\left\{\begin{aligned}
            d&,\ \mbox{$d$ is even and $p^{d/2}\equiv -1\pmod{r}$},\\
            2d&,\ \mbox{otherwise}.
        \end{aligned}\right.\]
        Moreover, there are $\varphi(r)/\operatorname{deg}(\psi)$ such representations.
	\end{lemma}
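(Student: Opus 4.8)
The plan is to analyze faithful irreducible representations of $D=\operatorname{D}_{2r}=\langle c\rangle\rtimes\langle b\rangle$ by restricting them to the cyclic subgroup $C=\langle c\rangle$ and then reconstructing the action of $b$. First I would settle the claim about $d$: since an irreducible representation of $C$ over $\mathbb{F}_p$ corresponds to an irreducible factor of $x^r-1$, the degree of such a representation equals the degree of the corresponding factor, which is the multiplicative order of $p$ modulo the order of the relevant root of unity. A faithful representation of $C\cong\mathbb{Z}_r$ must come from a primitive $r$-th root of unity, whose minimal polynomial over $\mathbb{F}_p$ has degree equal to the least $d$ with $r\mid p^d-1$, i.e.\ the order of $p$ in $\mathbb{Z}_r^\times$. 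This establishes the first assertion.

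Next I would determine $\deg(\psi)$ for a faithful irreducible $\psi$ of $D$. The idea is to restrict $\psi$ to $C$ and decompose $\psi|_C$ into $C$-irreducibles, each of which is $1$-dimensional over the splitting field but corresponds to a $d$-dimensional $\mathbb{F}_p$-irreducible as above. Conjugation by $b$ sends the character (eigenvalue) $\chi$ of $C$ to $\chi^{-1}$, since $b c b^{-1}=c^{-1}$. By Clifford theory, $\psi|_C$ is a sum of $C$-conjugates under $\langle b\rangle$, so the constituents come in the pair $\{\chi,\chi^{-1}\}$. The dichotomy is then whether the faithful $d$-dimensional $C$-module is already $b$-stable (i.e.\ $\chi$ and $\chi^{-1}$ lie in the same Frobenius orbit, which happens exactly when $p^{d/2}\equiv -1\pmod r$, so that the Frobenius power $x\mapsto x^{p^{d/2}}$ realizes inversion) or not. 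In the $b$-stable case the $C$-irreducible extends to $D$ and $\deg(\psi)=d$; otherwise $\psi$ is induced from the pair and $\deg(\psi)=2d$. I expect this case analysis — correctly matching the arithmetic condition $p^{d/2}\equiv-1\pmod r$ to the geometric condition that inversion of roots is a Frobenius power — to be the main obstacle, and I would lean on Lemma \ref{lem:irreducible-representation-dihedral}(iii), which forces $\deg(\psi)$ even and the involution $b$ to have equal $\pm1$ eigenspaces, to pin down the structure.

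Finally, for the count of $\varphi(r)/\deg(\psi)$ such representations, I would observe that faithful irreducible $\psi$ are parametrized by the primitive $r$-th roots of unity (there are $\varphi(r)$ of them) modulo the equivalence that identifies characters lying in the same $D$-representation. Each faithful irreducible $\psi$ accounts for exactly $\deg(\psi)$ of these primitive roots: its restriction to $C$ involves a Frobenius orbit of size $d$ together with, in the $2d$ case, the inverse orbit, and in every case the total number of primitive roots consumed equals $\deg(\psi)$. Dividing $\varphi(r)$ by $\deg(\psi)$ gives the stated count. The only care needed here is to verify that distinct faithful irreducibles consume disjoint sets of primitive roots and that together they exhaust all $\varphi(r)$ of them, which follows from the bijection between $C$-irreducibles and cyclotomic factors of $x^r-1$.
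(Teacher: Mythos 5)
Your proposal is correct and follows essentially the same route as the paper: both arguments parametrize the faithful irreducibles of $C$ by Frobenius orbits of primitive $r$-th roots of unity, identify the dichotomy $\deg(\psi)\in\{d,2d\}$ with whether inversion ($c\mapsto c^b=c^{-1}$) preserves such an orbit, and translate that into the condition $p^{d/2}\equiv-1\pmod r$ via the reciprocal polynomial. The only difference is that the paper delegates the final Clifford-theoretic step (extension versus induction, and the resulting count) to a cited theorem of Sim, whereas you sketch that step directly; this is a presentational rather than a substantive divergence.
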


	\section{Construction of rotary PX maps}\label{sec:section3} 
	In this section, we assume  $G=V\rtimes_\psi D=\ZZ_p^d\rtimes_\psi\D_{2r}$ where $d\geq 2$. Let $\D_{2(p-1)}=\l c \r\rtimes\l b \r=\ZZ_{p-1}\rtimes\ZZ_2$. For each $v\in V$ and $x\in D$, we define $v^x=\psi(x)^{-1}(v)$.

	Now we construct a family of $G$-rotary PX maps for the case where $\psi$ is irreducible.
	
	\textbf{Construction:}\label{Cons:PX-map} 
	Suppose that $\psi$ is irreducible.
	Let $x,y\in D$ be involutions such that $D=\langle x,y\rangle$,  and let $1\ne v\in\operatorname{C}_V(x)$. (By Lemma~\ref{lem:irreducible-representation-dihedral} {\rm (iii)}, $\operatorname{C}_V(x)\ne 0$.)
	We  define a rotary map $\RM(G,vx,y)$.
	Note that $|vx|=2p$. 
	As we shall see in Section~\ref{sec:characterization}, $\RM(G,vx,y)$ is a rotary PX map (see Theorem~\ref{thm:PX-equiv} and Theorem~\ref{thm:arc-regular-PX-graph}).
	
 The example below illustrates the above construction.
	\begin{example}\label{irreducible-PX-map-example}
		Let $r=p-1$,  and let $\omega$ be a primitive $r$-th root of unity over $\mathbb{F}_p$.  
		Let $d=2$ and let $\psi$ be given by 
		$$\psi(c)=\begin{bmatrix}
			\omega^{-1}&0\\
			0&\omega
		\end{bmatrix}\text{ and }
		\psi(b)=\begin{bmatrix}
			0&1\\
			1&0
		\end{bmatrix}.$$
		Let $v=(\omega,-1)^{\top}\in V$.
		Then $v\in \operatorname{C}_V(cb)$ and $G$ is generated by $vcb$ and $b$ as $\psi$ is irreducible. 
		Let $\rho=vcb$ and $\tau=b$.
		Then $(\rho,\tau)$ is a rotary pair of $G$.
            Let $\Gamma$ be the underlying graph of $\RM(G,\rho,\tau)$.
            Recall that $G\le\Aut(\Gamma)$ and $G_\alpha=\l\rho\r$.
		Since $\l\rho\r\cap V=\ZZ_p$, we have that $\Gamma$ is a PX graph by Theorem~\ref{thm:PX-equiv}. 
		Therefore, $\RM(G,\rho,\tau)$ is a rotary PX map with face length $|\rho\tau|=|vc|=p-1$.
	\end{example}

 Now we give some properties about our construction.
	\begin{proposition}\label{prop:equivalence-ggMap}
		Using the notation in Construction~\ref{Cons:PX-map}, let $1\ne v'\in\operatorname{C}_V(x)$. 
		Then  $\RM(G,vx,y)\cong\RM(G,v'x,y)$.
	\end{proposition}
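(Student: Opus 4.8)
The goal is to show that the choice of the fixed vector $v\in\operatorname{C}_V(x)\setminus\{0\}$ does not affect the isomorphism type of the resulting rotary map. By Proposition~\ref{prop:iso-RotaMap}, it suffices to produce an automorphism $f\in\Aut(G)$ with $f(vx)=v'x$ and $f(y)=y$. The plan is to build such an $f$ from an automorphism of the vector space $V$ that commutes with the $D$-action, i.e. an element of $\operatorname{End}_D(V)^\times$, together with the identity on $D$. Concretely, I would first recall that since $V$ is an irreducible (hence simple) $D$-module of even dimension $d$ (Lemma~\ref{lem:irreducible-representation-dihedral}(iii)), $\operatorname{End}_D(V)$ is a field, and any $D$-module automorphism $\phi$ of $V$ extends to an automorphism $\widehat{\phi}$ of the semidirect product $G=V\rtimes_\psi D$ by setting $\widehat{\phi}|_V=\phi$ and $\widehat{\phi}|_D=\operatorname{id}_D$.

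The key point is that such a $\widehat{\phi}$ fixes every element of $D$, so in particular $\widehat{\phi}(y)=y$, and it sends $vx$ to $\phi(v)x$. Thus the whole problem reduces to finding $\phi\in\operatorname{End}_D(V)^\times$ with $\phi(v)=v'$. Here I would invoke Lemma~\ref{lem:module-homomorphism}(ii): since $x$ is a non-central involution and both $v,v'$ are nonzero elements of $\operatorname{C}_V(x)=E_1(x)$, there exists $f\in\operatorname{End}_D(V)$ with $f(v)=v'$. I must then check that this $f$ is in fact invertible; because $\operatorname{End}_D(V)$ is a field (division ring that is commutative, by Lemma~\ref{lem:module-homomorphism}(i)) and $f\ne 0$ (as $f(v)=v'\ne 0$), any nonzero endomorphism is automatically a unit, so $\phi:=f$ lies in $\operatorname{End}_D(V)^\times$ as required.

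One technical subtlety to address carefully is that Lemma~\ref{lem:module-homomorphism} is phrased for the specific involution $b$ with the eigenspace $E_1(b)$, whereas Construction~\ref{Cons:PX-map} uses an arbitrary generating involution $x$ and $\operatorname{C}_V(x)=E_1(x)$. I would note that $\operatorname{End}_D(V)$ depends only on the module $V$, not on the chosen involution, and that the argument of Lemma~\ref{lem:module-homomorphism}(ii) applies verbatim with $b$ replaced by $x$, since the only properties used are that $x$ is a non-central involution and that $\dim E_1(x)=d/2$ by Lemma~\ref{lem:irreducible-representation-dihedral}(iii). Alternatively, one may conjugate $x$ to $b$ inside $D$ and transport the statement.

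Finally I would assemble the pieces: with $\phi$ as above, set $f=\widehat{\phi}\in\Aut(G)$, verify $f(vx)=\phi(v)x=v'x$ and $f(y)=y$, and conclude by Proposition~\ref{prop:iso-RotaMap} that $\RM(G,vx,y)\cong\RM(G,v'x,y)$. The only place requiring genuine care, rather than routine checking, is confirming that the map $\widehat\phi$ defined separately on $V$ and on $D$ really is a group automorphism of the semidirect product; this holds precisely because $\phi$ commutes with the $D$-action, which is exactly the content of $\phi\in\operatorname{End}_D(V)$. I do not expect any serious obstacle here, so the main task is to present these compatibility checks cleanly.
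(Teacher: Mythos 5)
Your proposal is correct and follows essentially the same route as the paper: reduce to exhibiting an automorphism of $G$ fixing $y$ and sending $vx$ to $v'x$, obtain it by extending an element of $\operatorname{End}_D(V)^\times$ carrying $v$ to $v'$ (via Lemma~\ref{lem:module-homomorphism}(ii)) by the identity on $D$, and verify the semidirect-product compatibility. You are in fact slightly more careful than the paper in two spots it leaves implicit: that a nonzero element of the field $\operatorname{End}_D(V)$ is automatically invertible, and that Lemma~\ref{lem:module-homomorphism}(ii), stated for the involution $b$, transfers to an arbitrary non-central involution $x$.
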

	\begin{proof}
		By Lemma~\ref{lem:quotient-maps}, $\RM(G,vx,y)\cong\RM(G,v'x,y)$ if and only if there exists an automorphism $f$ of $G$ such that $f(vx)=v'x,f(y)=y$.
		Since $v,v'\in \mathrm{C}_V(x)$, by Lemma~\ref{lem:module-homomorphism} (ii), there exists an $l\in (\mathrm{End}_D(V))^\times$ such that $l(v)=v'$.
		
		Let $f$ be a function given by
		$f(wg)=l(w)g$ for every $w\in V$, $g\in D$.
		We claim that $f$ is an automorphism. Suppose that $w_1g_1,w_2g_2$ are two elements of $G$. Then 
		$$\begin{aligned}
			f(w_1g_1w_2g_2) & =f(w_1w_2^{g_1^{-1}}g_1g_2)\\
			& =l(w_1)l(w_2^{g_1^{-1}})g_1g_2\\
			& =l(w_1)(l(w_2))^{g_1^{-1}}g_1g_2\\
			& =l(w_1)g_1l(w_2)g_2\\
			& =f(w_1g_1)f(w_2g_2).
		\end{aligned}$$
		This implies that $f$ is bijective; therefore, $f$ is an automorphism.   
		
		It follows that $f(vx)=v'x$ and $f(y)=y$ which implies that $\RM(G,vx,y) \cong \RM(G,v'x,y)$.
	\end{proof}
	Given an involution $x$ of the group $G=V\rtimes_\psi D=\ZZ_p^d\rtimes_\psi\D_{2r}$, we denote by $\rho_x$ an arbitrary element in the set $\{ vx\ |\ 1\neq v\in \C_V(x) \}$. 
	For the case where $\psi$ is irreducible, we show that all $G$-rotary PX maps can be constructed in this way.
    Specifically, for every rotary map $\RM(G,\rho,\tau)$ such that $|\rho|=2p$, there exists $x,y\in D$ such that $\RM(G,\rho,\tau)\cong\RM(G,\rho_x,y)$
	(see Corollary~\ref{coro:isomorphic-ggMap}). 
	In addition, rotary PX maps can be constructed using this method when $p\mid r$.
    However, the resulting maps might not cover all possible isomorphism classes under this condition.
	
	At the end of this part, we provide an example of a $G$-rotary PX map
	where $\psi$ is reducible.
	
	\begin{example}\label{examp:reducible}
		Assume that $r\ge 5$ is a primitive prime divisor of $p^2-1$.
		For example, $(p,r)=(11,5)$.
		Then $r$ is an odd prime and $r\mid p+1$.
		Therefore, every representation of $\operatorname{D}_{2r}$ of degree greater than $1$ are faithful.
		By Lemma~\ref{lem:faith-rep}, there are $(r-1)/2$ different irreducible representations of $\operatorname{D}_{2r}$ of degree $2$. 
		
		Note that $(r-1)/2\ge 2$.
		Let $\psi_1$ and $\psi_2$ be two irreducible representations of $\operatorname{D}_{2r}$ of degree $2$ such that $\psi_1\not\cong\psi_2$ and let
		$G=(V_1\times V_2)\rtimes_{\psi_1\times\psi_2}\operatorname{D}_{2r}$ 
		where $\operatorname{D}_{2r}=\langle c \rangle\rtimes\langle b \rangle=\mathbb{Z}_{r}\rtimes\mathbb{Z}_2$.
		Let $1\ne v_1\in \operatorname{C}_{V_1}(cb)$ and let 
		$1\ne v_2\in\operatorname{C}_{V_2}(cb)$.
            By an analogous method in the proof of Theorem~\ref{thm:existence-decomposition},
            one can derive that
		$G$ is generated by $v_1v_2cb$ and $b$.
		Let $\rho=v_1v_2cb$ and let $\tau=b$.
            Let $\Gamma$ be the underlying graph of $\RM(G,\rho,\tau)$.
            Recall that $G\le\Aut(\Gamma)$ and $G_\alpha=\l\rho\r$.
		Since $\l\rho\r\cap V=\ZZ_p$, we have that $\Gamma$ is a PX graph by Theorem~\ref{thm:PX-equiv}. 
		Therefore, $\RM(G,\rho,\tau)$ is a rotary PX map.
		We point out that if $\psi_1\cong\psi_2$, then $\langle \rho,\tau\rangle\cong V_1\rtimes_{\psi_1}\operatorname{D}_{2r}\not\cong G$,
		so the condition $\psi_1\not\cong\psi_2$ is essential here.
		
		We now present an alternative method for constructing a $G$-rotary PX map.
		Observe that for every $i\in\{1,2\}$, $G/V_i\cong V_i\rtimes_{\psi_i}\operatorname{D}_{2r}$.
		Let $G_i=V_i\rtimes_{\psi_i}\operatorname{D}_{2r}$ for every $i\in\{1,2\}$.
		Let $1\ne v_i\in\operatorname{C}_{V_i}(ab)$, 
		let $\rho_i=v_icb$ and let $\tau_i=b$.
		As we discussed above, $(\rho_i,\tau_i)$ is a rotary pair of $G_i$, and $\RM(G_i,\rho_i,\tau_i)$ is a rotary PX map.
		Let $H=\langle (\rho_1,\rho_2),(\tau_1,\tau_2)\rangle\le G_1\times G_2$.
		Then one can check that $V_1\times V_2\le H$ and $\langle(\rho_1,\rho_2)\rangle\cap (V_1\times V_2)\cong\mathbb{Z}_p$.
		Therefore, $\RM(H,(\rho_1,\rho_2),(\tau_1,\tau_2))$ is a rotary PX map.
		Also note that $H\cong G$.
		This gives a $G$-rotary PX map as well.
	\end{example}

	As illustrated in Example~\ref{examp:reducible}, the case where $\psi$ is reducible can often be simplified to the irreducible case using the direct product of rotary maps (see details in Theorem~\ref{fenjie}).
	Although with some exceptions, this gives all the $G$-rotary PX maps.

	\section{Characterization of rotary PX maps}\label{sec:characterization}
	
	In this section, we characterize rotary PX maps. We provide a more general characterization for all arc-regular maps in Theorem~\ref{thm:arc-regular-PX-graph}.
	
	We now introduce a generalized definition of PX graphs. This is necessary because, as shown in Lemma~\ref{lem:G-AT-QuoMap}, the quotient of a rotary PX map is not always a rotary PX map. The following definition ensures that our class of maps is closed under the quotient operation.
	\begin{definition}\label{def:augPXgraph}
		Let $s\geq 0$ be an integer, and let $\delta\in\{1,-1\}$.
		We define an \emph{augmented PX graph} $\operatorname{C}^*(p,r,s,\delta)$ as follows:
		\begin{enumerate}[{\rm (i)}]
			\item  $\operatorname{C}^*(p,r,s,\delta)=\operatorname{C}(p,r,s)$ if $s\ge 1$;\vskip0.1in
   
			\item $\operatorname{C}^*(p,r,0,1)=\C^{(p)}_r$ is a multicycle of length $r$ in which every pair of adjacent vertices is joined by $p$ parallel edges; \vskip0.1in
   
			\item [\rm (iii)] $\operatorname{C}^*(p,r,0,-1)=\operatorname{C}_{pr}$.
		\end{enumerate}
		The integer \( r \) is called the \emph{length} of \( \operatorname{C}^*(p,r,s,\delta) \). 
		For convenience, we use \( \operatorname{C}^*(p,r,s) \) instead of \( \operatorname{C}^*(p,r,s,\delta) \) 
		when \( s \geq 1 \) or \( \delta = 1 \).
		An \emph{augmented PX map} of length $r$ is a map  whose underlying graph is an augmented PX graph of length $r$.
	\end{definition}
	
	It is clear that $\Aut(\operatorname{C}^*(p,r,0))=\operatorname{S}_p\wr_{\Omega}\operatorname{D}_{2r}$
	where $\Omega$ is the vertex set of $\operatorname{C}^*(p,r,0)$ and $\operatorname{D}_{2r}$ acts naturally as the automorphism group of $\operatorname{C}_r$.
	
	Recall that $G$-rotary maps are $G$-arc-regular, in particular, $G$ is an arc-regular subgroup of $\Aut(\operatorname{C}^*(p,r,s,\delta))$.
	We claim the following theorem which will be proved later. 
 Recall that $p$ is an odd prime, $r\geq 3$ and $p\nmid r$.
	\begin{theorem}\label{thm:arc-regular-PX-graph}
		Let $s\geqslant 0$ be an integer, let $\delta\in\{1,-1\}$ and let $r$ be an integer such that $r\ge \max\{3,s+1\}$.
		Let $G$ be an arc-regular group of automorphisms of $\operatorname{C}^*(p,r,s,\delta)$.
		Then $G\cong \mathbb{Z}_p^{s+1}\rtimes\operatorname{D}_{2r}$.
	\end{theorem}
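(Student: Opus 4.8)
The plan is to pin down $G$ by first fixing its order, then exhibiting a normal elementary abelian $p$-subgroup of index $2r$ with dihedral quotient, and finally invoking Schur--Zassenhaus to split the extension. Since $G$ is arc-regular, $|G|$ equals the number of arcs of $\operatorname{C}^*(p,r,s,\delta)$, and a direct count gives $|G|=2rp^{s+1}$ in every case: for $s\ge 1$ this is the standard arc count of $\operatorname{C}(p,r,s)$, while for $s=0$ both the multicycle $\C^{(p)}_r$ and the cycle $\operatorname{C}_{pr}$ have $2rp$ arcs. As $p$ is odd and $p\nmid r$, we have $\gcd(p^{s+1},2r)=1$, so a Sylow $p$-subgroup of $G$ has order exactly $p^{s+1}$. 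The two cases $s=0$ are essentially immediate: for $\operatorname{C}_{pr}$ one has $G=\Aut(\operatorname{C}_{pr})=\operatorname{D}_{2pr}\cong\mathbb{Z}_p\rtimes\operatorname{D}_{2r}$, and for $\C^{(p)}_r$ one works inside $\Aut(\C^{(p)}_r)=\operatorname{S}_p\wr_\Omega\operatorname{D}_{2r}$, where the kernel of the projection to the cycle has order $p$ and is therefore cyclic; so from now on I would focus on $s\ge 1$.

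Next I would produce the projection onto the cycle. The fibres $B_i=\{i\}\times\mathbb{Z}_p^s$ of the position map $\operatorname{C}(p,r,s)\to\mathbb{Z}_r$ form a partition whose quotient graph is $\operatorname{C}_r$, and the first task is to show this partition is $G$-invariant. This is the canonical block system coming from the iterated covering $\operatorname{C}(p,r,s)\to\operatorname{C}(p,r,0)$ obtained by forgetting the last window coordinate, and for $r\ge\max\{3,s+1\}$ it is preserved by $\Aut(\operatorname{C}(p,r,s))$; I expect the boundary value $r=s+1$ to need a short separate verification. Granting $G$-invariance, restriction gives a homomorphism $\theta\colon G\to\Aut(\operatorname{C}_r)=\operatorname{D}_{2r}$. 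Since $G$ is arc-transitive on $\operatorname{C}(p,r,s)$ and adjacent vertices lie in adjacent fibres, $\theta(G)$ is arc-transitive on $\operatorname{C}_r$; the only arc-transitive subgroup of $\operatorname{D}_{2r}$ is $\operatorname{D}_{2r}$ itself, so $\theta$ is onto. Hence $K:=\ker\theta$ has order $2rp^{s+1}/2r=p^{s+1}$, so $K$ is the unique, and therefore normal, Sylow $p$-subgroup of $G$.

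It remains to prove that $K$ is elementary abelian, which I expect to be the main obstacle. First, $K$ is transitive on each fibre $B_i$: the block stabiliser $G_{B_i}=\theta^{-1}(\langle\text{reflection}\rangle)$ has order $2p^{s+1}$ and is transitive on the odd-size set $B_i$, and since $[G_{B_i}:K]=2$ while $|B_i|=p^s$ is odd, $K$ itself is transitive on $B_i$; consequently each point stabiliser $K_v$ has order $p$, and $\bigcap_v K_v=1$ by faithfulness. Because $G$ is arc-regular, $K$ is semiregular on arcs, and since $K$ preserves fibres and directions it acts regularly on the set of $p^{s+1}$ forward arcs between $B_i$ and $B_{i+1}$. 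Transporting the group structure through this regular action would identify $K$ with $\mathbb{Z}_p^{s+1}$ once one shows the action is by coordinate translations: the source and target maps from the forward arcs to $B_i$ and $B_{i+1}$ are $K$-equivariant and realise the two extreme coordinate axes as the stabilisers $K_u$ and $K_w$ of the endpoints, and the de Bruijn overlap between consecutive levels together with the rotation of $\operatorname{D}_{2r}$ normalising $K$ should force these order-$p$ subgroups to commute and to exhaust $K$. For $s=1$ this collapses to a one-line argument: a cyclic $K\cong\mathbb{Z}_{p^2}$ has a unique subgroup of order $p$, which would then equal every $K_v$ and so fix all vertices, contradicting faithfulness; hence $K\cong\mathbb{Z}_p^2$. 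The technical heart is making the coordinate-translation identification rigorous for all $s$, equivalently showing that the block kernel $K$ is abelian.

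Finally, with $K\trianglelefteq G$ elementary abelian of order $p^{s+1}$ and $G/K\cong\operatorname{D}_{2r}$, the coprimality $\gcd(p^{s+1},2r)=1$ lets me apply the Schur--Zassenhaus theorem to obtain a complement isomorphic to $\operatorname{D}_{2r}$. Therefore $G\cong K\rtimes\operatorname{D}_{2r}\cong\mathbb{Z}_p^{s+1}\rtimes\operatorname{D}_{2r}$, as claimed; the conjugation action of the complement on $K$ is exactly the representation $\psi$ that will feature in the later classification.
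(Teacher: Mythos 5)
Your skeleton (arc count gives $|G|=2rp^{s+1}$, project onto the cycle, identify the kernel $K$ as a normal Sylow $p$-subgroup, then split by Schur--Zassenhaus) matches the paper's, but the proposal has two genuine gaps. The first is the step you yourself flag as ``the technical heart'': showing that $K$ is elementary abelian for general $s$. A regular action of $K$ on the $p^{s+1}$ forward arcs between consecutive fibres identifies the arc set with $K$ as a $K$-set but carries no commutativity information, and the ``de Bruijn overlap should force these order-$p$ subgroups to commute and to exhaust $K$'' is not an argument; nothing you write rules out $K$ being a nonabelian group of order $p^{s+1}$ and exponent $p$. The paper does not prove this from scratch either: it invokes Praeger--Xu's determination of the full automorphism group, $\Aut(\operatorname{C}(p,r,s))=\operatorname{S}_p\wr_\Omega\operatorname{D}_{2r}$ (their Theorem~2.13), so that $K=G\cap\operatorname{S}_p^\Omega$ sits inside a Sylow $p$-subgroup of $\operatorname{S}_p^\Omega$, which is $\mathbb{Z}_p^\Omega$ and hence elementary abelian. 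If you are not going to cite that theorem, you owe a complete replacement for this step; if you are, the whole block-system discussion simplifies to the paper's Lemma~\ref{lem:ArcReg-Auto-PXGraph-2}.

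The second gap is the exceptional pair $(r,s)=(4,1)$. Here $\operatorname{C}(p,4,1)\cong K_{2p,2p}$, its full automorphism group is $(\operatorname{S}_{2p}\times\operatorname{S}_{2p})\rtimes\mathbb{Z}_2$ rather than $\operatorname{S}_p\wr\operatorname{D}_8$, and an arc-regular subgroup $G$ has no reason to preserve the partition into the four fibres $B_0,\dots,B_3$. So your homomorphism $\theta\colon G\to\operatorname{D}_{2r}$ is not even defined in this case, and your aside about ``the boundary value $r=s+1$'' does not cover it ($(4,1)$ satisfies $r>s+1$). The paper handles this case by a separate, genuinely different argument (Lemma~\ref{lem:ArcReg-Auto-PXGraph-3}): working with the two halves of the bipartition, showing $G^+=G_\alpha G_\beta$ with $G_\alpha\cong\mathbb{Z}_{2p}$ or $\operatorname{D}_{2p}$, and ruling out a non-normal Sylow $p$-subgroup via Brodkey's theorem and the classification of groups of order $12$. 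Your proof as written would need an analogous case analysis appended before the conclusion holds for all admissible $(r,s,\delta)$.
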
  
	As a direct consequence of Theorem~\ref{thm:arc-regular-PX-graph}, we obtain the following corollary.
	\begin{corollary}\label{cor:aut-map}
		Let $\mathcal{M}$ be a $G$-rotary map with underlying graph isomorphic to $\operatorname{C}^*(p,r,s,\delta)$.
		Then $G$ is isomorphic to $\mathbb{Z}_p^{s+1}\rtimes\operatorname{D}_{2r}$.
	\end{corollary}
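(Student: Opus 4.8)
The plan is to deduce this directly from Theorem~\ref{thm:arc-regular-PX-graph}, so the only genuine task is to confirm that its hypotheses are satisfied. First I would recall from Section~\ref{sec:arc-regular-maps} that a $G$-rotary map is by definition $G$-vertex-rotary, and in particular $G$-arc-regular. Since the underlying graph of $\mathcal{M}$ is isomorphic to $\operatorname{C}^*(p,r,s,\delta)$, transporting the $G$-action along this graph isomorphism exhibits $G$ as an arc-regular subgroup of $\Aut(\operatorname{C}^*(p,r,s,\delta))$.

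Next I would check the numerical constraint $r\ge\max\{3,s+1\}$ demanded by the theorem, and observe that it is automatic rather than an extra assumption. Arc-regularity forces arc-transitivity, so $\operatorname{C}^*(p,r,s,\delta)$ must be arc-transitive, and $r\ge 3$ is already built into the graph being well defined. For $s\ge 1$ we have $\operatorname{C}^*(p,r,s,\delta)=\operatorname{C}(p,r,s)$, and by the Praeger--Xu criterion quoted above (cf. \cite[Theorem 2.10]{praeger1989Characterization}) the PX graph $\operatorname{C}(p,r,s)$ is arc-transitive precisely when $r\ge s+1$; together with $r\ge 3$ this yields $r\ge\max\{3,s+1\}$. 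For $s=0$ the graph is either the multicycle $\C^{(p)}_r$ or the cycle $\operatorname{C}_{pr}$, each of which is arc-transitive for every $r\ge 3$, so again $r\ge\max\{3,s+1\}=3$.

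With both hypotheses verified, Theorem~\ref{thm:arc-regular-PX-graph} applies verbatim and gives $G\cong\mathbb{Z}_p^{s+1}\rtimes\operatorname{D}_{2r}$, as claimed. I do not expect any real obstacle in this argument: all of the substance resides in Theorem~\ref{thm:arc-regular-PX-graph}, whose proof must determine the isomorphism type of an \emph{arbitrary} arc-regular subgroup of $\Aut(\operatorname{C}^*(p,r,s,\delta))$. The only point here that warrants a moment's care is ensuring the parameter constraint is forced by the mere existence of the map rather than assumed, which is exactly what the arc-transitivity observation above supplies.
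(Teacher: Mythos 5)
Your argument is correct and matches the paper's intent exactly: the paper treats this corollary as an immediate consequence of Theorem~\ref{thm:arc-regular-PX-graph}, relying on the fact that a $G$-rotary map is $G$-arc-regular. Your additional observation that the constraint $r\ge\max\{3,s+1\}$ is forced by arc-transitivity of the underlying graph is a sensible (and correct) filling-in of a detail the paper leaves implicit.
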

	
	According to this corollary, we give the following definition, which is essential in the following sections.
	\begin{definition}\label{def:irr-PXmap}

		A $G$-arc-regular augmented PX map $\calM$ is \emph{irreducible} if the group $G\cong\operatorname{Z}_p^{s+1}\rtimes_\psi \operatorname{D}_{2r}$ where $\psi$ is irreducible.
		
	\end{definition}

	The proof of Theorem \ref{thm:arc-regular-PX-graph} follows directly from Lemma~\ref{lem:ArcReg-Auto-PXGraph-2} and Lemma~\ref{lem:ArcReg-Auto-PXGraph-3}, which we establish next.
	Clearly, the arc-regular automorphism subgroup of $ \operatorname{C}^*(p,r,0,-1)$ is isomorphic to $\operatorname{D}_{2pr}=\mathbb{Z}_p\rtimes\operatorname{D}_{2r}$.
	Hence it is sufficient to consider $ \operatorname{C}^*(p,r,s,\delta)$ where $(s,\delta)\neq (0,-1)$, as in the following three lemmas.

	\begin{lemma}\label{lem:ArcReg-Auto-PXGraph-2}
		Let $\Gamma=\operatorname{C}(p,r,s)$, where $s\geq 0$, $r\geqslant \max\{s+1,3\}$.
		Suppose that $G$ is an arc-regular subgroup of $\Aut(\Gamma)$, and $(r,s)\neq (4,1)$.
		Then $G\cong \ZZ_p^{s+1}\rtimes\D_{2r}$.
	\end{lemma}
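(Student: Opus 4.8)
The plan is to produce a canonical fibration of $\Gamma$ onto the cycle $\operatorname{C}_r$, to read off from it a surjection $G\to\D_{2r}$ whose kernel is an elementary abelian Sylow $p$-subgroup, and then to split the resulting extension by Schur--Zassenhaus. First, $\Gamma=\operatorname{C}(p,r,s)$ has $rp^s$ vertices, each of valency $2p$, hence $2rp^{s+1}$ arcs; arc-regularity gives $|G|=2rp^{s+1}$ and $|G_v|=2p$ for each vertex $v$. Since $p$ is an odd prime with $p\nmid r$, we have $\gcd(p^{s+1},2r)=1$, so a Sylow $p$-subgroup of $G$ has order $p^{s+1}$. (When $s=0$ the relevant graph is the multicycle $\operatorname{C}^*(p,r,0,1)$, which I treat as a base case below.)

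The heart of the proof is to show that the fibres $F_i=\{(i,x_0,\dots,x_{s-1})\}$ of the projection $(i,\vec x)\mapsto i$ form an $\Aut(\Gamma)$-invariant partition; I expect this to be the main obstacle. For $s=1$ the graph is the blow-up of $\operatorname{C}_r$ in which each vertex is replaced by $p$ pairwise non-adjacent copies, the fibres are exactly the twin classes $\{u:N(u)=N(v)\}$, and these are automorphism-invariant precisely when $\operatorname{C}(p,r,1)$ is not the complete bipartite graph $K_{2p,2p}$, i.e. when $r\ne 4$; this is exactly why the case $(r,s)=(4,1)$ must be excluded. For $s\ge 2$ I would use the automorphism-invariant relation $u\approx v$, defined by $|N(u)\cap N(v)|\ge 2$: a direct inspection of the adjacency shows that two vertices in distinct fibres share at most one common neighbour, so $\approx$ never crosses fibres, whereas inside a fibre its transitive closure has classes of size $p^2$, and the induced quotient graph is isomorphic to the augmented PX graph $\operatorname{C}^*(p,r,s-2,1)$. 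A descending induction on $s$, with base cases $s\in\{0,1,2\}$, then transports invariance of the fibration down to $\operatorname{C}_r$. The delicate points are precisely the base cases and the boundary parameters, where an intermediate quotient can collapse to a $K_{2p,2p}$; this is where the hypotheses $(r,s)\ne(4,1)$ and $r\ge\max\{s+1,3\}$ are needed.

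Granting the fibration, the action of $G$ on $\{F_0,\dots,F_{r-1}\}$ is an action on $\operatorname{C}_r$, giving a homomorphism $\theta\colon G\to\Aut(\operatorname{C}_r)=\D_{2r}$. Because $G$ is arc-transitive on $\Gamma$ and arcs of $\Gamma$ project to arcs of $\operatorname{C}_r$, the image $\theta(G)$ is arc-transitive on $\operatorname{C}_r$ and therefore equals $\D_{2r}$; hence $K:=\ker\theta$ has order $p^{s+1}$, and as a normal subgroup whose order is the full $p$-part of $|G|$ it is the unique Sylow $p$-subgroup of $G$. A short local count---for $v\in F_i$ the image of $G_v$ in $\D_{2r}$ is the order-$2$ reflection fixing $i$---shows $|K_v|=p$ and that $K$ acts transitively on each fibre. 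It remains to prove $K\cong\ZZ_p^{s+1}$, that is, that $K$ is elementary abelian. For $s=1$ this is immediate, since $K$ is a $p$-subgroup of the fibre kernel $\operatorname{S}_p^{\,r}$, whose Sylow $p$-subgroups are elementary abelian. For general $s$ I would use the rigid ``shift'' adjacency between consecutive fibres---each bipartite layer is a disjoint union of copies of $K_{p,p}$ matched by prefix/suffix overlap---together with the tower of canonical quotients from the previous step to force the elements of $K$ to act as the coordinate translations $(i,x_0,\dots,x_{s-1})\mapsto(i,x_0+a_i,\dots,x_{s-1}+a_{i+s-1})$, so that $K$ embeds into the elementary abelian translation group $\ZZ_p^{\,r}$; this is a technical but self-contained verification once the fibration is available.

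Finally, $1\to K\to G\to\D_{2r}\to 1$ is an extension of a dihedral group by the $p$-group $K$ with $\gcd(|K|,2r)=1$, so the Schur--Zassenhaus theorem guarantees that it splits. Combined with $K\cong\ZZ_p^{s+1}$, this gives $G\cong\ZZ_p^{s+1}\rtimes\D_{2r}$, as required.
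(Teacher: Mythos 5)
Your overall skeleton (compute $|G|=2rp^{s+1}$, exhibit a $G$-invariant fibration over $\operatorname{C}_r$, show the kernel is an elementary abelian Sylow $p$-subgroup, split by Schur--Zassenhaus) has the right shape, and the endgame matches the paper. But the two steps you yourself flag as the heart of the matter are exactly the ones you leave as sketches, and the paper does not prove them either: it imports both from \cite[Theorem~2.13]{praeger1989Characterization}, which determines $\Aut(\operatorname{C}(p,r,s))=\operatorname{S}_p\wr_{\Omega}\operatorname{D}_{2r}$ for $(r,s)\neq(4,1)$. Once that is cited, everything is a short computation: the induced action on $\operatorname{C}_r$ is arc-transitive, hence $G/(G\cap\operatorname{S}_p^{\Omega})\cong\operatorname{D}_{2r}$, so $G\cap\operatorname{S}_p^{\Omega}$ has order $p^{s+1}$ and is elementary abelian because a Sylow $p$-subgroup of $\operatorname{S}_p^{\,r}$ is $\mathbb{Z}_p^{\,r}$. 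Note that this base group permutes the alphabets of the $r$ coordinate \emph{positions}, not the $r$ fibres of size $p^s$; that is why elementary abelianness is free in the paper and genuinely hard in your formulation, where your kernel $K$ sits a priori only in $\operatorname{S}_{p^s}^{\,r}$, whose Sylow $p$-subgroups are iterated wreath products for $s\ge 2$. Your ``technical but self-contained verification'' that $K$ acts by coordinate translations is precisely the content you would need to supply, and it is not supplied.

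Moreover, the combinatorial device you propose for $s\ge 2$ fails at a parameter the lemma must cover. Take $r=4$, $s=2$, $u=(0,x_0,x_1)$ and $v=(2,y_0,y_1)$: they have the common neighbour $(1,x_1,y_0)$ in $F_1$ and the common neighbour $(3,y_1,x_0)$ in $F_3$, so $|N(u)\cap N(v)|=2$ even though $u$ and $v$ lie in fibres two steps apart. Hence your relation $\approx$ crosses fibres whenever $r=4$ and $s\ge 2$, its transitive closure merges $F_0\cup F_2$ and $F_1\cup F_3$, and the descending induction collapses; excluding only $(r,s)=(4,1)$ does not repair this. (For $r\ge 5$ your claim is fine, since only one intermediate fibre can host common neighbours of vertices in distinct fibres, and it contains at most one.) To make your route work you would need a different invariant at $r=4$, or you should do what the paper does and simply invoke Praeger and Xu's determination of the full automorphism group.
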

	\begin{proof}
		By \cite[Theorem 2.13]{praeger1989Characterization},  $G\le\operatorname{S}_p\wr_{\Omega}\operatorname{D}_{2r}$.
		Also note that the quotient graph $\Gamma_{\operatorname{S}_p^\Omega}=\operatorname{C}_r$, the cycle of length $r$.
		Let $K$ be the kernel of $G$ acting on $\C_r$.
		Then $K\geqslant\S_p^{\Omega}$ and
		$|G/G\cap \S_p^{\Omega}|\leqslant |G/G\cap K|$.	
		Let $\bar{G}=GK/K\le\Aut(\operatorname{C}_r)$ be the induced group of $G$.
		Since $G$ is arc-regular, the induced group $\bar{G}$ on $\operatorname{C}_r$ is also arc-transitive, which implies that $\bar{G}=\Aut(\operatorname{C}_r)=\operatorname{D}_{2r}$.
		Therefore, $\operatorname{S}_p\wr_\Omega\operatorname{D}_{2r}=G\operatorname{S}_p^{\Omega}$.
		And we have $2r=|G/(G\cap \operatorname{S}_p^{\Omega})|\leq|\bar{G}|=2r$ which implies that $G/(G\cap \operatorname{S}_p^{\Omega})\cong \D_{2r}$.
		Then $G\cap\operatorname{S}_p^\Omega$ is a $p$-group of order $p^{s+1}$ as $|G|=2rp^{s+1}$.
		Since a Sylow $p$-subgroup of $\operatorname{S}_p^{\Omega}$ is isomorphic to $\mathbb{Z}_p^\Omega$,
		we have that $G\cap\operatorname{S}_p^\Omega\cong\mathbb{Z}_p^{s+1}$.
		Therefore, $G\cong\mathbb{Z}_p^{s+1}.\operatorname{D}_{2r}$. 
		By the Schur-Zassenhaus theorem, this extension is split and $G\cong\mathbb{Z}_p^{s+1}\rtimes\operatorname{D}_{2r}$.
	\end{proof}

	Now we deal with cases where $(r,s)=(4,1)$.

	\begin{lemma}\label{lem:ArcReg-Auto-PXGraph-3}
		Let $\Gamma=\operatorname{C}(p,4,1)$.
		Suppose that $G$ is an arc-regular subgroup of $\Aut(\Gamma)$.
		Then $G$ is isomorphic to $\ZZ_p^{2}\rtimes\D_{8}$.
	\end{lemma}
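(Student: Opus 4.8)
The plan is to exploit the fact that $(r,s)=(4,1)$ is special precisely because the underlying graph is far more symmetric than a generic Praeger--Xu graph. First I would observe that $\Gamma=\operatorname{C}(p,4,1)$ is isomorphic to the complete bipartite graph $K_{2p,2p}$: partitioning the vertex set $\ZZ_4\times\ZZ_p$ according to the parity of the first coordinate gives parts $X=\{(i,x):i\in\{0,2\}\}$ and $Y=\{(i,x):i\in\{1,3\}\}$, each of size $2p$, and a vertex in layer $i$ is joined to every vertex in layers $i-1$ and $i+1$, which for $r=4$ together exhaust the opposite part. This is exactly why $(r,s)=(4,1)$ is excluded from Lemma~\ref{lem:ArcReg-Auto-PXGraph-2}: here $\Aut(\Gamma)=\operatorname{S}_{2p}\wr\operatorname{S}_2$ is strictly larger than $\operatorname{S}_p\wr_\Omega\operatorname{D}_8$. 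Since $\Gamma$ has $4p^2$ edges, arc-regularity forces $|G|=8p^2$. Because $G$ must carry $X\to Y$ arcs to $Y\to X$ arcs, it contains a part-swapping element, so $G^+:=G\cap(\operatorname{S}(X)\times\operatorname{S}(Y))$ has index $2$ and order $4p^2$, and it acts regularly on the $4p^2$ arcs directed from $X$ to $Y$, that is, regularly on $X\times Y$ under the product action.

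Next I would pin down the Sylow $p$-subgroup $P$ of $G$. Since $p$ is odd, $P\le G^+$, and $P$ lies in a Sylow $p$-subgroup of $\operatorname{S}(X)\times\operatorname{S}(Y)$; as $p\ge 3$ the largest power of $p$ dividing $(2p)!$ is $p^2$, so $\operatorname{Syl}_p(\operatorname{S}_{2p})\cong\ZZ_p^2$ is elementary abelian, and hence so is $P$, with $|P|=p^2$. The crucial step is to show $P\trianglelefteq G$, and I would argue inside $G^+$: here $n_p(G^+)\mid 4$ and $n_p(G^+)\equiv 1\pmod p$, which forces $n_p(G^+)=1$ for every odd prime except possibly $p=3$, where a priori $n_3(G^+)\in\{1,4\}$. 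Whenever $n_p(G^+)=1$, $P$ is characteristic in $G^+\trianglelefteq G$, so $P\trianglelefteq G$.

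The main obstacle is therefore the single case $p=3$ with $n_3(G^+)=4$, which I expect to dispose of using the regularity of $G^+$ on $X\times Y$. If $n_3(G^+)=4$, then analysing the $G^+$-action on its four Sylow $3$-subgroups (together with the facts that $A_4$ has no subgroup of index $2$ and no central extension by $\ZZ_3$) one finds $G^+\cong A_4\times\ZZ_3$. But regularity on $X\times Y$ requires that the two coordinate stabilizers $\operatorname{Stab}_{G^+}(x_0)$ and $\operatorname{Stab}_{G^+}(y_0)$ (each of order $6$, since both projections are transitive of degree $2p=6$) intersect trivially; yet every subgroup of order $6$ in $A_4\times\ZZ_3$ contains the central factor $1\times\ZZ_3$, so no two of them meet trivially, a contradiction. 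Hence $n_3(G^+)=1$ as well, and $P\trianglelefteq G$ in all cases.

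Finally I would recover the dihedral quotient intrinsically. Since $P\trianglelefteq G$, its orbits form a $G$-invariant partition $\mathcal B$ of the vertex set. Writing $A=\pi_X(G^+)$, which is transitive on $X$, the fixed-point set of the normal subgroup $\pi_X(P)\trianglelefteq A$ is $A$-invariant, hence empty or all of $X$; it cannot be all of $X$ (that would force $P$ to act freely on $Y$ with orbits of size $p^2>2p$), so $\pi_X(P)$ is fixed-point-free and has exactly two orbits of size $p$ on $X$, and likewise on $Y$. Thus $\mathcal B$ consists of four blocks, two in each part, and the quotient graph is $K_{2,2}=\operatorname{C}_4$. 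As $G$ is arc-transitive on $\Gamma$, it acts arc-transitively on this $4$-cycle, so its image in $\Aut(\operatorname{C}_4)$ is all of $\operatorname{D}_8$; the kernel fixes each of the four blocks and has order $8p^2/8=p^2$, whence it equals $P$ and $G/P\cong\operatorname{D}_8$. Since $\gcd(p^2,8)=1$, the Schur--Zassenhaus theorem splits the extension and yields $G\cong\ZZ_p^2\rtimes\operatorname{D}_8$, as required.
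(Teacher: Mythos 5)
Your overall strategy runs parallel to the paper's: identify $\Gamma$ with $K_{2p,2p}$, get $|G|=8p^2$, prove the Sylow $p$-subgroup of the part-preserving subgroup $G^+$ is normal (with $p=3$ requiring a separate argument), and then identify the quotient as $\operatorname{D}_8$ and split by Schur--Zassenhaus. The paper executes the last two steps via the vertex stabilizers, using the factorization $G^+=G_\alpha G_\beta$ with $G_\alpha\cong G_\beta\cong\ZZ_{2p}$ or $\operatorname{D}_{2p}$, and for $p=3$ it invokes Brodkey's theorem to pass to $G^+/\operatorname{O}_3(G^+)\cong A_4$ before deriving the same kind of contradiction from the absence of order-$6$ subgroups in $A_4$. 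Your substitutes --- the regular action of $G^+$ on $X\times Y$, and the block/quotient-graph argument identifying $G/P$ with $\Aut(\operatorname{C}_4)=\operatorname{D}_8$ --- are correct and arguably cleaner for the final step.

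One justification in your $p=3$ case is false as stated: $A_4$ \emph{does} admit a non-split central extension by $\ZZ_3$, namely $V_4\rtimes\ZZ_9$ (the Schur multiplier of $A_4$ is $\ZZ_2$, but $H^2(A_4,\ZZ_3)\cong\operatorname{Ext}(A_4^{\mathrm{ab}},\ZZ_3)\cong\ZZ_3$ is nontrivial), and this group also has order $36$ and $n_3=4$, so it is not eliminated by your Sylow-counting setup. The conclusion $G^+\cong A_4\times\ZZ_3$ nevertheless survives, because you have already shown in the preceding paragraph that the Sylow $3$-subgroup of $G^+$ is elementary abelian (via the embedding into $\operatorname{S}_6\times\operatorname{S}_6$), whereas $V_4\rtimes\ZZ_9$ has cyclic Sylow $3$-subgroups; you should cite that fact explicitly here instead of the incorrect ``no central extension'' claim. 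With that repair the contradiction goes through: the two coordinate stabilizers have order $6$ and must intersect trivially by regularity on $X\times Y$, yet every order-$6$ subgroup of $A_4\times\ZZ_3$ contains the central $\ZZ_3$. The remaining steps (semiregularity forcing $\pi_X(P)\ne 1$, the four blocks of size $p$, the quotient $\operatorname{C}_4$, and the kernel being exactly $P$) are all sound.
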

	
	\begin{proof}
		We have that $G$ is an arc-regular group on the complete bipartite graph $K_{2p,2p}$ according to \cite[Theorem 2.13]{praeger1989Characterization}.
		It follows that $|G|=8p^2$.

		Let $U$ and $V$ be the two parts of $K_{2p,2p}$, let $\alpha\in U$ and let $\beta\in V$.
		Then there exists a unique involution $z\in G$ such that $z(\alpha)=\beta$ and $z(\beta)=\alpha$, as $G$ is arc-regular.
		Therefore, $G_\alpha^z=G_\beta$.
		Let $G^+$ be the subgroup of $G$ which consists of automorphisms fixing $U$ and $V$.
		Then $G^+$ is a normal subgroup of $G$ and $G=G^+\rtimes\langle z\rangle$.
		Since $G_\alpha$ is transitive on $V$, we have that $G^+=G_\alpha G_\beta$ by the Frattini argument.
		Since $G$ is arc-regular, we have that $G_\alpha$ is regular on $V$.
		Therefore, $G_\beta\cong\mathbb{Z}_{2p}\mbox{ or }\operatorname{D}_{2p}$.
		Note that $G_\alpha\cong G_\beta$. We also have $G_\alpha\cong\mathbb{Z}_{2p}\mbox{ or }\operatorname{D}_{2p}$.
		Therefore, $G^+$ has at least two subgroups of order $p$, which implies that the Sylow $p$-subgroup of $G^+$ is isomorphic to $\mathbb{Z}_p^2$.

		We claim that $G^+$ has a unique Sylow $p$-subgroup $(G^+)_p$,
		hence $(G^+)_p$ is characteristic in $G^+$.
		By Sylow's theorem, if $p\ge 5$, then $G^+$ has a unique Sylow $p$-subgroup $(G^+)_p$.
		Now suppose that $p=3$ and 
		there is more than one Sylow $p$-subgroup of $G^+$.
		Note that a group of order $p^2$ is abelian.
		By Brodkey's theorem (\cite[Theorem 1.37]{isaacs2008Finite}), there are two Sylow $p$-subgroups $P$ and $Q$ of $G^+$ such that $P\cap Q=\operatorname{O}_p(G^+)$.
		Since $P$ is not normal in $G$, we have that $P\ne Q$.
		Observe that $|P||Q|/(|P\cap Q|)=|PQ|\le |G^+|=36$. 
		We deduce that $|\operatorname{O}_p(G^+)|=|P\cap Q|=3$.
		Let $H:=G^+/\operatorname{O}_p(G^+)$.
		Then $H$ is a group of order $12$ and $\operatorname{O}_p(H)=1$.
		
		By the classification of groups of order $12$, $H\cong\operatorname{A}_4$.
		Recall that $G^+=G_\alpha G_\beta$.
		Let $H_\alpha$ and $H_\beta$ be quotients of $G_\alpha$ and $G_\beta$
		with respect to $\operatorname{O}_p(G^+)$ respectively.
		Then $H=H_\alpha H_\beta$ and $H_\alpha,H_\beta\in \{\mathbb{Z}_2,\mathbb{Z}_{2p} ,\D_{2p}\}$.
		Since $\operatorname{A}_4$ does not have subgroups isomorphic to $\mathbb{Z}_{2p}$ or $\D_{2p}$, we have that $H_\alpha\cong H_\beta\cong \mathbb{Z}_2$. Then $12=|H|=|H_\alpha H_\beta|\le 4$, a contradiction.
		Thus, the claim is proved.

		Note that $(G^+)_p$ is also a Sylow $p$-subgroup of $G$.
		Write $(G^+)_p=G_p$.
		Then, $G_p$ is also normal in $G$.
		Let $\bar{G}=G/G_p$. 
		We have $\bar{G}\cong (\bar{G}_\a\bar{G}_\b)\rtimes\langle\bar{z}\rangle$.
		Since $\bar{G}_\alpha\cong\bar{G}_\beta\cong\mathbb{Z}_2$ and $\bar{G}\alpha^{\bar{z}}=\bar{G}_\beta$,
		we have that $\bar{G}\cong \mathbb{Z}_2^2\rtimes\mathbb{Z}_2\cong\operatorname{D}_8$.
		Therefore, $G\cong\mathbb{Z}_p^2\rtimes\operatorname{D}_8$. 
		This completes the proof.
	\end{proof}

	When $p\mid r$, we still have $G=\ZZ_p^{s+1}.\D_{2r}$ but the extension might not be split (see the example below).
	
	\begin{example}\label{example:p|r}
		Let $$G=\l d\r\times(\l a\r\rtimes(\l b\r\times\l c\r))\cong \ZZ_2\times (\ZZ_{p^2}\rtimes(\ZZ_2\times\ZZ_p))$$ 
		where $a^b=a^{-1}$ and $a^c=a^{1-p}$.
		
		We claim that $G\cong \ZZ_p^2.\D_{4p}$. Let $M=\l a^p,c\r$. Since $a^c=a^{1-p}$, we have that 
		$$\begin{aligned}
			&(a^{p})^{c}=a^{p-p^2}=a^p \text{ and }\\
			&c^a=a^{-1}c a=c(a^{-1})^{c}a=ca^{p-1}a=ca^p.
		\end{aligned}$$
		Note that $M^d=M$ and $M^b=M$. We have that $\ZZ_p^2\cong M\trianglelefteq G$. Let $\Bar{G}=G/M$. We now show that $\Bar{G}\cong \D_{4p}$. Since $G=\l d\r\times(\l a\r\rtimes(\l b\r\times\l c\r))$, we have that 
		$$\Bar{G}=\l \Bar{d}\r\times (\l \Bar{a}\r\rtimes\l \Bar{b}\r)\cong \ZZ_2\times \D_{2p}\cong \D_{4p}.$$

		We claim that the extension of $G\cong \ZZ_p^2.\D_{4p}$ is non-split. First, one can check that $M$ is the unique elementary abelian normal $p$-subgroup of order $p^2$. 
		Now suppose that this extension is split, which means that there exists a subgroup $H\cong \D_{4p}$ such that $H\cap M=1$. 
        Assume that $h\in H$ has order $p$. 
        Note that $M$ consists of elements of order $p$.
        We have that $h\in M$.
        This contradicts the condition that $H\cap M=1$.
		
		Let $\rho=bc$ and $\tau=dab$, then $|\rho|=2p$ and $|\tau|=2$. We claim that $\l \rho,\tau\r=G$. Let $K=\l \rho,\tau\r$. Since $\rho=bc$ and $[b,c]=1$, we obtain that $\rho^2=c^2$ and $\rho^p=b$ which implies $b,c\in K$. Note that $(\tau b)^2=dada=a^2\in K$, we conclude that $a\in K$. It follows that $d=\tau b a^{-1}\in K$. Therefore, $K=\l \rho,\tau\r=G$.
		
		Now, we construct a rotary map $\mathcal{M}$ by $(G,\rho,\tau)$. Let \[\mathcal{M}=\mathrm{RotaMap}(G,\rho,\tau)\] and 
		let $\Gamma$ be the underlying graph of $\mathcal{M}$. 
		Recall that $\ZZ_p^2\cong M=\l  a^p,c\r\trianglelefteq G$. By definition, the vertex stabilizer $G_v=\l \rho\r=\l bc\r$. Therefore, 
		$$G_v\cap M=\l \rho^2\r=\l c^2\r\cong \ZZ_p.$$ 
		By Theorem~\ref{thm:PX-equiv}, we conclude that $\Gamma$ is a PX graph. Consequently, $\mathcal{M}$ is a $G$-rotary PX map where $G=\ZZ_p^2.\D_{4p}$ is non-split.
	\end{example}

	We now establish a criterion to determine whether the underlying graph of an arc-regular map is a Praeger-Xu graph.
	Since rotary maps are arc-regular, this lemma provides a characterization of rotary augmented PX maps as well.
	
	\begin{lemma}\label{lem:arc-regular-PX-map}

		Let $G = \mathbb{Z}_p^{s+1} \rtimes \operatorname{D}_{2r}$ with $ s \geqslant 0 $, and let \( \mathcal{M} \) be a $G$-arc-regular map.
		If $|G_\a|=2p$, 
		then the underlying graph $\Gamma$ is isomorphic to  $ \operatorname{C}^*(p,r,s) $.
	\end{lemma}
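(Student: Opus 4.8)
The plan is to exploit the block system coming from the normal subgroup $V=\mathbb{Z}_p^{s+1}$ together with a counting argument in which the hypothesis $p\nmid r$ is decisive, and then to appeal to the Praeger--Xu recognition criterion (Theorem~\ref{thm:PX-equiv}). First I would record the numerology forced by arc-regularity: writing $G=V\rtimes D$ with $V=\mathbb{Z}_p^{s+1}$ and $D=\operatorname{D}_{2r}$, arc-regularity together with $|G_\alpha|=2p$ gives that the valency of $\Gamma$ equals $|G_\alpha|=2p$ and that $|V(\Gamma)|=|G:G_\alpha|=rp^s$. Since the underlying graph of a map is connected, $\Gamma$ is a connected $G$-arc-transitive graph of valency $2p$.

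Next I would pin down $V_\alpha:=V\cap G_\alpha$. As $V_\alpha$ is a $p$-subgroup of the order-$2p$ group $G_\alpha$, we have $|V_\alpha|\in\{1,p\}$; and since $V\trianglelefteq G$ with $G$ vertex-transitive, every $V$-orbit on vertices has the common size $|V|/|V_\alpha|$, so the number of $V$-orbits is $rp^s|V_\alpha|/p^{s+1}=r|V_\alpha|/p$. Because $p\nmid r$, taking $|V_\alpha|=1$ would make this a non-integer; hence $|V_\alpha|=p$, the subgroup $V$ is \emph{not} semiregular, and $V$ has exactly $r$ orbits $B_0,\ldots,B_{r-1}$, each of size $p^s$. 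These orbits form a $G$-invariant partition $\mathcal B$, and exactly as in the proof of Lemma~\ref{lem:ArcReg-Auto-PXGraph-2} the induced action of $G/V\cong D$ on $\mathcal B$ realises $\operatorname{D}_{2r}=\Aut(\operatorname{C}_r)$, so the quotient graph $\Gamma_{\mathcal B}$ is the cycle $\operatorname{C}_r$. Consequently edges of $\Gamma$ join only consecutive blocks; moreover $G_\alpha V=G_{B_0}$ (a comparison of orders), so some $t\in G_\alpha\setminus V$ maps to the reflection fixing $B_0$ and swapping its two neighbouring blocks, which forces $\alpha$ to have exactly $p$ neighbours in each of them.

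I would then split on $s$. If $s=0$, each block is a single vertex, so the $p$ edges from $\alpha$ into a neighbouring block are $p$ parallel edges and $\Gamma\cong\operatorname{C}^{(p)}_r=\operatorname{C}^*(p,r,0,1)$; the alternative $\operatorname{C}_{pr}$ is excluded since it has $pr\neq r$ vertices and vertex-stabiliser of order $2\neq 2p$. If $s\ge 1$, then $V_\alpha\cong\mathbb{Z}_p$ acts semiregularly on the $2p$ neighbours of $\alpha$ (being a subgroup of the regular group $G_\alpha$) while preserving each neighbouring block, so the $p$ neighbours in a fixed block form a single free $V_\alpha$-orbit and are therefore distinct; thus $\Gamma$ is simple. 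Now $V$ is an abelian normal $p$-subgroup that is not semiregular, so Theorem~\ref{thm:PX-equiv} yields $\Gamma\cong\operatorname{C}(p,r',s')$ for some $r',s'$, and matching the length $r$ of the block-cycle $\Gamma_{\mathcal B}=\operatorname{C}_r$ and the fibre size $p^s$ against the corresponding invariants of a Praeger--Xu graph forces $r'=r$ and $s'=s$, i.e. $\Gamma\cong\operatorname{C}(p,r,s)=\operatorname{C}^*(p,r,s)$.

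I expect the main obstacle to be this last parameter-matching: Theorem~\ref{thm:PX-equiv} only delivers \emph{some} Praeger--Xu graph, and since the vertex count $rp^s=r'p^{s'}$ does not separate $r'$ and $s'$ when $p\mid r'$ is allowed, one must argue that the cycle-quotient length is a genuine graph invariant equal to the intrinsically-defined length $r$ of $\mathcal B$ (ruling out a coarser description with $p\mid r'$), after which $s'=s$ follows from the vertex count. A secondary point requiring care is the freeness of the $V_\alpha$-action on the in-block neighbours used to establish simplicity when $s\ge 1$. If one prefers to sidestep Theorem~\ref{thm:PX-equiv} altogether, the same block data---a cycle of $r$ fibres of size $p^s$ with $p$ edges to each neighbouring fibre---can instead be used to construct an explicit isomorphism onto $\operatorname{C}^*(p,r,s)$ by coordinatising the fibres through the $V$-module structure, at the cost of more bookkeeping.
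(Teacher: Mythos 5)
Your overall architecture (the block system coming from $V$, the quotient cycle $\operatorname{C}_r$, the counting argument forcing $|V_\alpha|=p$, then an appeal to Theorem~\ref{thm:PX-equiv}) runs parallel to the paper's, which instead organises the proof as a case analysis on the edge multiplicity $|G_{\alpha\beta}|\in\{1,2,p,2p\}$. The genuine gap is your simplicity argument for $s\ge 1$. You assert that $V_\alpha$ ``acts semiregularly on the $2p$ neighbours of $\alpha$ (being a subgroup of the regular group $G_\alpha$)''. But $G_\alpha$ acts regularly on the \emph{arcs} at $\alpha$, not on the neighbours; when $\Gamma$ has multiple edges these two actions differ, and a subgroup of $G_\alpha$ can perfectly well fix a neighbour while permuting the parallel edges joining it to $\alpha$. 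Since excluding multiple edges is precisely what this step is supposed to accomplish, the argument is circular. Worse, the step cannot be repaired, because the assertion is false there. Take $G=\langle k\rangle\times(\langle c\rangle\rtimes\langle a\rangle)\cong\mathbb{Z}_p\times\operatorname{D}_{2pr}$ with $|k|=p$, $|c|=pr$ and $c^a=c^{-1}$; setting $w=c^{r}$, $z=c^{p}$ and $b=za$, this group is $\mathbb{Z}_p^{2}\rtimes_{\gamma_{1,1}\oplus\gamma_{-1,-1}}\operatorname{D}_{2r}$ with $\operatorname{D}_{2r}=\langle z,a\rangle$, so it satisfies the hypotheses with $s=1$ and $p\nmid r$. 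One checks that $\rho=ka$ and $\tau=wb$ form a rotary pair with $|\rho|=2p$, so $\RM(G,\rho,\tau)$ is a $G$-arc-regular map with $|G_\alpha|=2p$. However $K=\langle k\rangle$ is central and contained in $\langle\rho\rangle$, hence $K\le G_{\alpha\beta}=\langle\rho\rangle\cap\langle\rho\rangle^{\tau}$: the edge multiplicity is $p$, every vertex has only two distinct neighbours, and the underlying graph is the multicycle $\operatorname{C}^{(p)}_{pr}$, not $\operatorname{C}(p,r,1)$. Here $V_\alpha=K$ fixes every vertex, so it certainly does not act freely on the in-block neighbours.

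So the difficulty you flagged as ``secondary'' (freeness of the $V_\alpha$-action) is in fact the fatal one, and the lemma as stated fails for $s\ge 1$ whenever the $\operatorname{D}_{2r}$-module $V$ contains a trivial summand meeting $G_\alpha$. For what it is worth, the paper's own proof stumbles at the corresponding point: in the case $|G_{\alpha\beta}|=p$ it writes $\Gamma=\operatorname{C}^*(p,r_1,0)$ and invokes Theorem~\ref{thm:arc-regular-PX-graph} to get $G\cong\mathbb{Z}_p\rtimes\operatorname{D}_{2r_1}$ and $r_1=r$; but that theorem is only available under the standing hypothesis $p\nmid r_1$, whereas order considerations give $r_1=rp^{s}$, so for $s\ge 1$ the application is illegitimate and the intended contradiction disappears --- exactly the situation realised by the example above. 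A correct statement needs an additional hypothesis (for instance that no nontrivial subgroup of $G_\alpha\cap V$ is normal in $G$, equivalently that $\gamma_{1,1}$ does not occur in the relevant way), and whichever repair is adopted must be tracked through Theorem~\ref{thm:existence-decomposition} and Theorem~\ref{thm:main}, where the summand $\gamma_{1,1}\oplus\gamma_{-1,-1}$ currently produces a purported map on $\operatorname{C}(p,r,1)$ that actually lives on a multicycle.
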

	
	\begin{proof}
		Let $(\alpha,e,\beta)$ be an arc of $\Gamma$.
		Since $\mathcal{M}$ is $G$-arc-regular, we have that $\Gamma$ is $G$-arc-regular.
		Therefore, the vertex valency of $\Ga$ equals $|G_\a| = 2p$.
		Recall that $p\nmid r$.
		We have that $V\cong \ZZ_p^{s+1}$ is the unique normal Sylow $p$-subgroup of $G$ which  implies $|G_\alpha\cap V|=p$.
		Therefore $V$ is not semiregular on the vertices of $\Gamma$.

		By \cite[Theorem 1.2]{LiPraegerS-A}, the edge multiplicity of $\Gamma$ is $|G_{\alpha\beta}|$.
		First, if $|G_{\alpha\beta}|=1$, then $\Gamma$ is a  $G$-arc-regular simple graph of valency $2p$.
		Since $V$ is not semiregular on the vertices of $\Gamma$, we have that $\Gamma=\operatorname{C}(p,r,s)$ by Theorem~\ref{thm:PX-equiv}.
		
		Let $\tau$ be the generator of $G_e$.
		Then $(G_\a)^{\tau}=G_\beta$, and so $G_{\alpha\beta}=G_\alpha\cap (G_\alpha)^\tau$.
		Thus $G_{\alpha\beta}^\tau=G_{\alpha\beta}$.
		
		Now, suppose that $|G_{\alpha\beta}|=2$.
		Take the generator $\tau_1$ of $G_{\alpha\beta}$.
		Then by $G_{\alpha\beta}^\tau=G_{\alpha\beta}$, $\tau$ centralizes $\tau_1$.
		Since $|G_\a|=2p$ and $|G_e|=2$, we have that $G_\alpha V/V=\langle \tau_1 V\rangle$ and $G_e V / V=\langle \tau V \rangle$.
		Then $G / V =\l G_\a, G_e\r/V= \langle \tau_1 V, \tau V \rangle \cong \mathbb{Z}_2^2$.
		Also note that $ G / V \cong \operatorname{D}_{2r}$ which implies that $r=2$.
        This contradicts to our assumption that $r\ge 3$.

		Assume  that $|G_{\alpha\beta}|=p$, equivalently, the edge multiplicity of $\Gamma$ is $p$.
		Then the simple base graph of $\Gamma$ is of valency $2$. 
		Therefore, $\Gamma$ is a cycle with
		$p$ multiple edges between each adjacent pair of vertices.
		Hence $\Gamma=\operatorname{C}^*(p,r_1,0)$ for some $r_1\ge 3$.
		By Theorem~\ref{thm:arc-regular-PX-graph}, $G\cong\mathbb{Z}_p\rtimes\operatorname{D}_{2r_1}$,
		which implies that $r=r_1$ and $\Gamma=\operatorname{C}^*(p,r,0)$.
		
		If $|G_{\alpha\beta}|=2p$, then $G_\alpha=G_{\alpha\beta}$ which implies that $\tau$ normalizes $G_{\alpha}$.
		By $G=\l G_\a,\tau\r$, $G=G_\alpha\rtimes\langle\tau\rangle$, and so $|G|=4p$.
		Since $G=\mathbb{Z}_p^{s+1}\rtimes\operatorname{D}_{2r}$, we have that $s=0$ and $r=2$.
		However, this contradicts our assumption that $r\ge 3$. This completes the proof.
	\end{proof}

	\section{Irreducible PX maps}\label{sec:irr-PX-map}
	In this section, we study irreducible rotary augmented PX maps. 
	Throughout this section, let \( G =V\rtimes_\psi D= \mathbb{Z}_p^d \rtimes_{\psi} \operatorname{D}_{2r} \), 
	where \( \psi \) is irreducible. 
	We classify all \( G \)-rotary augmented PX maps for each such \( \psi \).  
	By Theorem~\ref{thm:arc-regular-PX-graph}, this yields a complete classification of irreducible rotary augmented PX maps.  
	Before proceeding, we first study the automorphism group of $G$.

	\subsection{Split affine extensions of a dihedral group}
	We study $\Aut(\mathbb{Z}_p^d\rtimes_{\psi}\operatorname{D}_{2r})$. 
	Recall that  $p$ is an odd prime and $p\nmid r$.
	
	The following notation will be used throughout this paper.
	Let $G$ be a group.
	For each $g \in G$, conjugation by $g$ induces the automorphism $\tilde{g}: h \mapsto ghg^{-1}$.
	The \textit{inner automorphism group} is $\mathrm{Inn}(G) := \{ \tilde{g} \mid g \in G \}$.
	In particular, for a subgroup $H\leqslant G$, we define $\Inn_G(H)=\{\tilde{h}\in \Aut(G) \ |\ h\in H\}$.
	With this notation, we have the following lemma.
	
	\begin{lemma}\label{lem:fram-Auto-AutM}
		Let $G=V\rtimes_\psi D$ where $V=\mathbb{Z}_p^d$, $p\nmid |D|$, and $\psi$ is irreducible and non-trivial.
		Set $A = \operatorname{Aut}(G)$. 
		Then \[A = \operatorname{Inn}_G(V) \rtimes \operatorname{N}_A(D)\] with $\operatorname{Inn}_G(V) \cong V$.
		
	\end{lemma}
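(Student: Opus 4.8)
We want to determine the structure of $A=\Aut(G)$ for $G=V\rtimes_\psi D$ with $V=\ZZ_p^d$, $p\nmid|D|$, and $\psi$ irreducible and nontrivial. The plan is to exploit that $V$ is the unique Sylow $p$-subgroup of $G$ (since $p\nmid|D|$ and $|G|=p^d|D|$), hence $V$ is characteristic and every automorphism of $G$ restricts to an automorphism of $V$. The key structural fact I would establish is that $D$ is a \emph{complement} to $V$ that is well-controlled up to conjugacy, so that after composing with a suitable inner automorphism by an element of $V$, any automorphism can be taken to stabilize $D$ setwise. This is exactly what splits $A$ as $\Inn_G(V)\rtimes\N_A(D)$.

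\begin{proof}
Since $p\nmid|D|$, the subgroup $V=\ZZ_p^d$ is the unique Sylow $p$-subgroup of $G$, hence $V$ is characteristic in $G$. Thus every $f\in A$ satisfies $f(V)=V$, and $f$ induces an automorphism $\bar f$ of $G/V\cong D$.

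First I would show $\Inn_G(V)\trianglelefteq A$ with $\Inn_G(V)\cong V$. The map $v\mapsto\tilde v$ is a homomorphism $V\to A$; its kernel is $V\cap \Z(G)$. Because $\psi$ is nontrivial, no nonzero $v\in V$ is centralized by all of $D$, so $V\cap\Z(G)=1$ and $\Inn_G(V)\cong V$. Normality follows from the standard computation $f\tilde v f^{-1}=\widetilde{f(v)}$, valid since $f(V)=V$; thus $f$ conjugates $\Inn_G(V)$ into itself.

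Next I would produce the complement $\N_A(D)$ and show $A=\Inn_G(V)\cdot\N_A(D)$ with trivial intersection. Let $f\in A$. By the Schur--Zassenhaus theorem (applicable since $\gcd(|V|,|D|)=1$), both $D$ and $f(D)$ are complements to $V$ in $G$, and by the conjugacy part of Schur--Zassenhaus they are conjugate; moreover the conjugating element may be chosen in $V$, because $G=V D$ forces any conjugator $vd$ to act on complements the same way as its $V$-part $v$. Hence there is $v\in V$ with $\tilde v\!\circ\! f(D)=D$, i.e.\ $\tilde v\!\circ\! f\in\N_A(D)$. This gives $A=\Inn_G(V)\,\N_A(D)$. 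For the intersection, if $\tilde v\in\N_A(D)$ then $\tilde v(D)=D$; but $\tilde v(d)=vd\,v^{-1}=(v\,(d v^{-1}d^{-1}))\,d$, and this lies in $D$ for all $d\in D$ only if $v=d v d^{-1}$ for all $d$, i.e.\ $v\in\C_V(D)=1$ (again by nontriviality of $\psi$). Thus $\Inn_G(V)\cap\N_A(D)=1$, and since $\Inn_G(V)$ is normal in $A$ we conclude $A=\Inn_G(V)\rtimes\N_A(D)$.
\end{proof}

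\noindent\textbf{Main obstacle.} The delicate point is the ``conjugator may be chosen in $V$'' step: Schur--Zassenhaus gives conjugacy of complements by \emph{some} element of $G$, and one must argue that its $D$-part is harmless so that the effective adjustment lies in $\Inn_G(V)$. I expect this to be the crux, and it is where the splitting $G=VD$ with $V\cap D=1$ is used essentially; the remaining verifications (normality of $\Inn_G(V)$, triviality of the intersection) reduce to the single fact that $\C_V(D)=0$, which is precisely the nontriviality of the irreducible module $\psi$.
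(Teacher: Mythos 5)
Your proof is correct and follows essentially the same route as the paper: $V$ is the characteristic Sylow $p$-subgroup, Schur--Zassenhaus plus a Frattini-type argument gives $A=\operatorname{Inn}_G(V)\operatorname{N}_A(D)$ with the conjugator adjusted into $V$, and both the trivial intersection and $\operatorname{Inn}_G(V)\cong V$ reduce to $\operatorname{C}_V(D)=0$, which the paper packages as the claim $\operatorname{N}_G(D)=D$. The only nitpick is that $\operatorname{C}_V(D)=0$ needs irreducibility together with nontriviality (as you acknowledge at the end), not nontriviality alone.
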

	\begin{proof}
		Let $N=\operatorname{N}_G(D)$.
		We claim that $N=D$. 
		Since $N\ge D$, we have that
		$N=\operatorname{N}_V(D)D$.
		Since $V$ is normal in $G$, $\operatorname{N}_V(D)$ is normal in $N$.
		Also note that $D$ is normal in $N$ and $\operatorname{N}_V(D)\cap D=1$.
		Therefore, $\operatorname{N}_V(D)$ is centralized by $D$.
		Since $\psi$ is irreducible, $\operatorname{N}_V(D)=1\mbox{ or }V$.
		If $\operatorname{N}_V(D)=V$, then $\varphi$ is trivial.
		Hence $\operatorname{N}_V(D)=1$ and $N=D$.
		
		Note that for every $g\in A$, $g(D)$ is a complement of $V$. 
		By the Schur-Zassenhaus theorem and Frattini's argument, \[A=\operatorname{Inn}_G(V)\operatorname{N}_A(D).\]
		Since $\Inn_G(V)$ is the unique Sylow $p$-subgroup of $\Inn_G(G)$,
		we have that $\Inn_G(V)$ is a characteristic subgroup of $\Inn_G(G)$
		which implies that $\Inn_G(V)$ is normal in $A$.
		Hence, it is sufficient to show that $\operatorname{Inn}_G(V)\cap \operatorname{N}_A(D)=1$ and $\operatorname{Inn}_G(V)$ is normal in $A$.
		Let $1\ne l\in V$. If $\tilde{l}\in \operatorname{N}_A(D)$, 			
		then $l\in \operatorname{N}_G(D)=D$.
		Therefore, $l\le D\cap V=1$ which implies that $\operatorname{Inn}_G(V)\cap \operatorname{N}_A(D)=1$. 
		Since $N=D$, we have that $\operatorname{Z}(G)\le D$ which implies that $V\cap\operatorname{Z}(G)=1$.
		Note that $\operatorname{Inn}_G(V)\cong V/(V\cap\operatorname{Z}(G))$. We have that $\operatorname{Inn}_G(V)\cong V$. This completes the proof.
	\end{proof}
    Now we study the structure of $\operatorname{N}_{A}(\D_{2r})$ (see Lemma~\ref{Aut(D)_phi}).
    Before that, we introduce a useful lemma.
	\begin{lemma}\label{Iso-rep&groups-2}
		For $i\in \{1,2\}$, let $G_i=\ZZ_p^{d_i}\rtimes_{\psi_i}D_i$, where $p\nmid |D_1||D_2|$. 
		Then the following are equivalent:
		\begin{enumerate}[{\rm (i)}]
			\item $G_1\cong G_2$; \vskip0.1in
   
			\item there exists an isomorphism $\sigma:D_1\to D_2$ such that $\psi_2\circ\sigma\cong \psi_1$.
		\end{enumerate} 
		Moreover, if~{\rm (ii)} holds, there exists an isomorphism $f:G_1\rightarrow G_2$ such that $f(D_1)=D_2$ and $f|_{D_1}=\sigma$.
	\end{lemma}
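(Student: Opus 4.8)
The plan is to prove the equivalence of the two conditions in Lemma~\ref{Iso-rep&groups-2} by establishing both implications, with the forward direction $\text{(i)}\Rightarrow\text{(ii)}$ being the substantive one and $\text{(ii)}\Rightarrow\text{(i)}$ being essentially a direct construction. Throughout, the key structural fact I would exploit is that, since $p\nmid|D_1||D_2|$, each $V_i=\mathbb{Z}_p^{d_i}$ is the unique Sylow $p$-subgroup of $G_i$, hence characteristic in $G_i$. This uniqueness is the engine that lets any abstract isomorphism respect the semidirect-product structure.

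For $\text{(i)}\Rightarrow\text{(ii)}$, suppose $f:G_1\to G_2$ is an isomorphism. Since $V_1$ is the unique Sylow $p$-subgroup of $G_1$ and likewise for $V_2$, the isomorphism $f$ must send $V_1$ onto $V_2$; in particular $d_1=d_2=:d$. Passing to quotients, $f$ induces an isomorphism $\bar f:G_1/V_1\to G_2/V_2$, i.e. an isomorphism $\sigma':D_1\to D_2$ once we identify $G_i/V_i\cong D_i$ via the splitting. The delicate point is that $f$ need not send the chosen complement $D_1\le G_1$ to the chosen complement $D_2\le G_2$; rather $f(D_1)$ is \emph{some} complement of $V_2$. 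However, by the Schur--Zassenhaus theorem all complements of $V_2$ in $G_2$ are conjugate, so there is $g\in G_2$ (indeed $g\in V_2$) with $g\,f(D_1)\,g^{-1}=D_2$. Composing $f$ with the inner automorphism $\tilde g$ of $G_2$ yields an isomorphism $f':G_1\to G_2$ carrying $D_1$ to $D_2$ and restricting to the same quotient map $\sigma:D_1\to D_2$. Now I would compare the two conjugation actions: for $x\in D_1$ and $v\in V_1$ one has $f'(v^x)=f'(v)^{f'(x)}$, which on $V_2$ reads $\psi_1(x)$ acting through the $\mathbb{F}_p$-linear map $f'|_{V_1}$ as $\psi_2(\sigma(x))$. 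In other words, $f'|_{V_1}$ is an $\mathbb{F}_p$-linear isomorphism $V_1\to V_2$ intertwining $\psi_1$ with $\psi_2\circ\sigma$, which is exactly the statement that $\psi_2\circ\sigma\cong\psi_1$.

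For $\text{(ii)}\Rightarrow\text{(i)}$, given $\sigma:D_1\to D_2$ with an $\mathbb{F}_p$-linear equivalence $T:V_1\to V_2$ satisfying $T\circ\psi_1(x)=\psi_2(\sigma(x))\circ T$ for all $x\in D_1$, I would define $f:G_1\to G_2$ on the semidirect product by $f(vx)=T(v)\,\sigma(x)$ for $v\in V_1$, $x\in D_1$. A short multiplicativity check using the intertwining relation confirms $f$ is a homomorphism, and it is a bijection since $T$ and $\sigma$ are. By construction $f(D_1)=D_2$ and $f|_{D_1}=\sigma$, which simultaneously proves $G_1\cong G_2$ and the ``moreover'' clause.

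**The main obstacle** I anticipate is the complement-matching step in the forward direction: an arbitrary isomorphism respects $V_1\mapsto V_2$ automatically, but one must correct it by an inner automorphism to also align the chosen complements before the conjugation actions can be compared as representations. This is precisely where Schur--Zassenhaus (conjugacy of complements, available since $\gcd(p,|D_i|)=1$) is indispensable. Once the complements are aligned, the passage from ``$f'$ conjugates $\psi_1$ to $\psi_2\circ\sigma$'' to ``$\psi_2\circ\sigma\cong\psi_1$'' is a routine unwinding of the definition of equivalence of representations, since $f'|_{V_1}$ is automatically $\mathbb{F}_p$-linear (it is a homomorphism of the elementary abelian $p$-group $V_1$, hence $\mathbb{F}_p$-linear).
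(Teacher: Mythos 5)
Your proposal is correct and follows essentially the same route as the paper's proof: both directions use the conjugacy of complements to the normal Sylow $p$-subgroup (the paper phrases your Schur--Zassenhaus step as choosing $v$ in the Sylow $p$-subgroup of $G_2$ with $f(D_1)^v=D_2$ and setting $\sigma=\tilde v\circ f|_{D_1}$), followed by the same intertwining computation showing $f|_{V_1}$ is an equivalence between $\psi_1$ and $\psi_2\circ\sigma$, and the same explicit construction $vb\mapsto f_0(v)\sigma(b)$ for the converse and the ``moreover'' clause.
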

	\begin{proof}
		Let $f:G_1\to G_2$ be an isomorphism.
		Then $d_1=d_2$. Write $d=d_1$.
		Since $p\nmid |D_1||D_2|$, we have that 
		there exists an element $v$ in the Sylow $p$-subgroup of $ G_2$ such that $f(D_1)^v=D_2$.
		Let $\sigma=\tilde{v}\circ f|_{D_1}$.
		Observe that $\sigma$ is an isomorphism from $D_1$ to $D_2$.
		Now we show that $\psi_2\circ\sigma\cong\psi_1$.

		For $i\in\{1,2\}$, let $ V_i $ be the unique Sylow $p$-subgroup of $G_i$.
		Then $V_1$ is a $D_1$-module with respect to $\psi_1$
		and $V_2$ is a $D_1$-module with respect to $\psi_2\circ\sigma$.
		Let $f|_{V_1}:V_1\rightarrow V_2$ be the restriction of $f$ on $ V_1 $.
		For each $ w\in V_1 $ and $ b\in D_1 $, we have that
		\[ \begin{aligned}
			f|_{V_1}(\psi_1(b)(w))=& f|_{V_1}(b^{-1}wb)\\
			=&f(b)^{-1}f(w)f(b)\\
		\end{aligned} \]
		Since $f(w)\in V_2$ and $V_2$ is abelian, 
		\[\begin{aligned}
			f(b)^{-1}f(w)f(b)&=(f(b)^{v})^{-1}f(w)f(b)^v\\
			&=\psi_2(f(b)^v)(f|_{V_1}(w))\\
			&=\psi_2\circ\sigma(f|_{V_1}(w))
		\end{aligned}\]
		Therefore, $f|_{V_1}$ serves as the isomorphism between representations $\psi_1$ and $\psi_2\circ\sigma$.

		Conversely, let $\sigma:D_1\to D_2$ be an isomorphism such that $\psi_1\cong \psi_2\circ \sigma$.
		For $i\in\{1,2\}$, let $ V_i $ be the unique Sylow $p$-subgroup of $G_i$.
		Then $V_1$ is a $D_1$-module with respect to $\psi_1$
		and $V_2$ is a $D_1$-module with respect to $\psi_2\circ\sigma$.
		Since $\psi_1\cong \psi_2\circ \sigma$, there exists a $D_1$-module isomorphism $f_0:V_1\to V_2$.
		Define $f:G_1\to G_2$ by 
		\[vb\mapsto f_0(v)\sigma(b),\]
		for each $v\in V_1$ and $b\in D_1$.
		Then $f(D_1)=D_2$ and $f|_{D_1}=\sigma$.
		Clearly, $f$ is bijective and one can check that $f$ is a homomorphism. 
		Hence $f$ is an isomorphism.
		This completes the proof.
	\end{proof}

	\begin{lemma}\label{Aut(D)_phi}
		Using notation in Lemma~\ref{lem:fram-Auto-AutM}, and let $N=\N_A(D)$.
		Define a homomorphism
		$\kappa:N\to\Aut(D)$ by $f\mapsto f|_{D}$.
		Then $\ker(\kappa )\cong \mathrm{End}_{D}(V)^\times$ and 
		\[\mbox{$\mathrm{im}(\kappa)= \Aut(D)_{\psi}$},\] 
		where $\Aut(D)_{\psi}=\{\sigma\in \Aut(D)\ \mid \ \psi\circ\sigma\cong \psi\}$.
	\end{lemma}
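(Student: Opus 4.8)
The plan is to analyze $\kappa$ directly on both its kernel and its image, exploiting that $V=\mathbb{Z}_p^d$ is the unique (hence characteristic) Sylow $p$-subgroup of $G$, since $p\nmid|D|$. Consequently every $f\in A$ satisfies $f(V)=V$, while an element $f\in N=\N_A(D)$ additionally satisfies $f(D)=D$; thus $\kappa(f)=f|_D$ is a genuine automorphism of $D$ and $\kappa$ is a homomorphism because restriction respects composition. The key observation driving the whole argument is that each $f\in N$ is completely determined by the pair $(f|_V,f|_D)=(\phi,\sigma)\in\GL(V)\times\Aut(D)$, and the condition that $f$ be an automorphism of $G$ translates into an intertwining relation between $\phi$, $\sigma$, and $\psi$.

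For the kernel, note that $f\in\ker(\kappa)$ means $f|_D=\mathrm{id}_D$. First I would show that the restriction $\phi=f|_V$ lies in $\mathrm{End}_D(V)^\times$: applying $f$ to the conjugation relation $\psi(b)(w)=b^{-1}wb$ and using $f(b)=b$ gives $\phi\circ\psi(b)=\psi(b)\circ\phi$ for all $b\in D$, so $\phi$ is $D$-linear and invertible. Conversely, every $\phi\in\mathrm{End}_D(V)^\times$ extends to an automorphism of $G$ fixing $D$ pointwise via $f(vb)=\phi(v)b$; that this formula defines an automorphism is exactly the computation already carried out in Proposition~\ref{prop:equivalence-ggMap}. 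Hence $f\mapsto f|_V$ is a bijection $\ker(\kappa)\to\mathrm{End}_D(V)^\times$, and it is multiplicative, yielding $\ker(\kappa)\cong\mathrm{End}_D(V)^\times$.

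For the image, the inclusion $\mathrm{im}(\kappa)\subseteq\Aut(D)_\psi$ comes from the same computation run with $\sigma=f|_D$ no longer trivial: applying $f$ to $\psi(b)(w)=b^{-1}wb$ now gives $\phi\circ\psi(b)=\psi(\sigma(b))\circ\phi$, i.e. $\psi(\sigma(b))=\phi\,\psi(b)\,\phi^{-1}$, so $\psi\circ\sigma\cong\psi$ and $\sigma\in\Aut(D)_\psi$. For the reverse inclusion, given $\sigma\in\Aut(D)_\psi$, I would invoke the converse direction of Lemma~\ref{Iso-rep&groups-2} with $G_1=G_2=G$ and $\psi_1=\psi_2=\psi$: it produces an automorphism $f$ of $G$ with $f(D)=D$ and $f|_D=\sigma$. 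Such an $f$ lies in $N$ and satisfies $\kappa(f)=\sigma$, giving $\Aut(D)_\psi\subseteq\mathrm{im}(\kappa)$ and hence equality.

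The conceptual heart is the equivalence between ``$f$ extends to an automorphism of $G$ fixing $D$ setwise with $f|_D=\sigma$'' and ``$f|_V$ intertwines $\psi$ with $\psi\circ\sigma$'': both inclusions for the image are immediate once this is in place. I expect the only genuine (though routine) obstacle to be the verification that the extension $f(vb)=\phi(v)\sigma(b)$ is multiplicative, since this is precisely the step where the relation $\psi\circ\sigma\cong\psi$ is consumed. Fortunately this verification is already packaged in Lemma~\ref{Iso-rep&groups-2}, so I would simply cite it rather than repeat the calculation.
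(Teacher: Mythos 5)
Your proposal is correct and follows essentially the same route as the paper: the kernel is identified with $\mathrm{End}_D(V)^\times$ by restricting to $V$ and checking the conjugation relation, and the image is pinned down by the intertwining computation for one inclusion and by invoking Lemma~\ref{Iso-rep&groups-2} (with $G_1=G_2=G$) for the other. The only cosmetic difference is that you outsource the verification that $vb\mapsto\phi(v)b$ is an automorphism to Proposition~\ref{prop:equivalence-ggMap}, where the paper redoes it inline.
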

	\begin{proof}
		We first determine $\ker(\kappa)$.
		Let $f\in \ker(\kappa)$, i.e. the restriction $f\mid_{D}=\mathrm{id}_{D}$.
		It is clear that $f(V)=V$ as $V$ is characteristic in $G$.
		Note that $V$ is a $D$-module with respect to $\psi$.
		We claim that $f|_V$ is a $D$-module endomorphism.
		Let $v\in V$ and $b\in D$.
		We have $f|_V(\psi(b)(v))=f(v^b)=f(v)^{f(b)}=\psi(f(b))(f(v))$.
		It follows from $f\mid_{D}=\mathrm{id}_{D}$ that  $\psi(f(b))(f(v))=\psi(b)(f(v))$, and so 
		\[f|_V(\psi(b)(v))=\psi(b)(f|_V(v)).\]
		The claim is hence proved.
		Moreover, $f|_V$ is a $D$-module isomorphism by  definition.
		Therefore, we can define $\xi:\ker(\kappa)\to \mathrm{End}_{D}(V)^{\times}$ by $f\mapsto f|_V$.
		Since $f\mid_{D}=\mathrm{id}_{D}$,  we have that the homomorphism $\xi$ is injective.
		We next show that $\xi$ is surjective.
		For each $g\in \mathrm{End}_{D}(V)^{\times}$, define $f_g:G\to G$ by $vb\mapsto g(v)b$, where $v\in V$ and $b\in D$.
		One can check that $f_g$ is a homomorphism, and it is clear that $f_g\in N$.
		We therefore prove that $\xi$ is an isomorphic since $\xi(f_g)=g$.

		Now, we determine the image of $\kappa$.
		
		Let $f\in N$ and let $\sigma=\kappa(f)$.
		Let $U_1$ be the $D$-module with respect to $\psi$
		and let $U_2$ be the $D$-module with respect to $\psi\circ \sigma$.
		This means that $U_1$ and $U_2$ share the same base space $V$ but they are different $D$-modules.
		We show that $f|_V$ serves as a $D$-module isomorphism from $U_1$ to $U_2$.
		For each $u\in U_1$ and $b\in D$, we have that
		\[ \begin{aligned}
			f|_{V}(\psi(b)(w))=& f|_{V}(b^{-1}wb)\\
			=&f(b)^{-1}f(w)f(b)\\
			=&\sigma(b)^{-1}f(w)\sigma(b)\\
			=&\psi(\sigma(b))f|_V(w).
		\end{aligned} \]
		Therefore, $\psi\circ\sigma\cong\psi$, which implies that $\sigma\in\Aut(D)_\psi$.

		Let $ \sigma\in \Aut(D)_{\psi} $.
		Lemma~\ref{Iso-rep&groups-2} shows that there exists an automorphism $f\in N$ such that $\kappa(f)=\sigma$.
		Therefore $\sigma\in\mathrm{im}(\kappa)$.
		Thus, we conclude that $\mathrm{im}(\kappa)=\Aut(D)_\psi$.
	\end{proof}
	
	\subsection{Number of irreducible PX maps}
	
	In this subsection, we count the number of irreducible augmented PX maps of length $r$ and use this result to establish the one-to-one correspondence between irreducible PX maps of length $r$ and irreducible representations of degree greater than $1$.
Since the set of $G$-rotary augmented PX maps is in bijection with the $\Aut(G)$-orbits on rotary pairs $(\rho,\tau)$ of $G$ where $|\rho|=2p$, the number of such rotary maps is determined simply by counting these orbits (see Lemma~\ref{lem-|RPairs|=|IrrReps|}). 
Therefore, the total number of irreducible augmented PX maps of length $r$ is found by summing these individual counts over all applicable groups $G$ (see Theorem~\ref{|RPairs|=|IrrReps|}).

	Throughout this subsection, let $a,b\in\D_{2r}$ be involutions generating $\D_{2r}$.

	\begin{lemma}\label{lem:number}
		Let $G=V\rtimes_\psi D=\ZZ_p^d\rtimes_\psi \D_{2r} $, where  $\psi$ is irreducible.
		Then $G$ has a rotary pair $(\rho,\tau)$ such that $|\rho|=2p$ if $\psi\ne\gamma_{-1,-1}$ and $|\rho|=2$ if $\psi=\gamma_{-1,-1}$. 
		Moreover, the number of such rotary pairs of $G$ is
		\begin{enumerate}[{\rm (i)}]
			\item  $(p-1)r\varphi(r)$ if $\psi=\gamma_{1,1}$;\vskip0.1in
			
			\item $(p-1)pr\varphi(r)/2$ if  $\psi=\gamma_{1,-1}\mbox{ or }\gamma_{-1,1}$;\vskip0.1in
			
			\item  $pr\varphi(pr)$ if $\psi=\gamma_{-1,-1}$;\vskip0.1in
			
			\item  $p^d(p^{d/2}-1)r\varphi(r)$ if $d\ge 2$.
		\end{enumerate}
	\end{lemma}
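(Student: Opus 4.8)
The plan is to parametrise each rotary pair by the images $\bar\rho,\bar\tau$ in $D=\D_{2r}$ together with the $V$-parts, and to show that the condition $\langle\rho,\tau\rangle=G$ collapses to an easily counted statement. Write $\rho=v\bar\rho$ and $\tau=w\bar\tau$ with $v,w\in V$ under the splitting $G=V\rtimes_\psi D$. Since $p$ is odd and $p\nmid 2r$, any element of order dividing $2p$ projects to an element of $D$ of order dividing $2$; hence if $|\rho|\in\{2,2p\}$ then $\bar\rho$ is an involution or trivial, and likewise $\bar\tau$. For an involution $x\in D$ the operator $\psi(x)$ is an involution, so $V=E_1(x)\oplus E_{-1}(x)$; writing $v=v_++v_-$ accordingly, a direct computation (additively in $V$) gives $\rho^2=v+\psi(x)(v)=2v_+$. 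Thus $vx$ has order $2p$ exactly when $v_+\neq 0$, and order $2$ exactly when $v\in E_{-1}(x)$. This records the order-$2p$ elements and the involutions of $G$ in terms of eigenspace data.

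First I would reduce the generation condition. If $(\rho,\tau)$ generates $G$ then $\langle\bar\rho,\bar\tau\rangle=D$; since two involutions of $D$ generate $D$ only when both are reflections whose product generates $\langle c\rangle$ (the central involution is ruled out when $r$ is even), the images must be reflections $g=c^ib,h=c^jb$ with $\gcd(i-j,r)=1$, of which there are exactly $r\varphi(r)$ ordered pairs. Conversely, setting $N=\langle\rho,\tau\rangle$, the subgroup $N\cap V$ is normalised by $NV=G$ and is therefore a $D$-submodule of $V$; as $\psi$ is irreducible, $N\cap V\in\{0,V\}$, and $N=G$ iff $N\cap V=V$. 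The key observation, which removes all the difficulty, is that whenever $|\rho|=2p$ we have $\rho^2=2v_+\neq 0$ with $\rho^2\in N\cap V$, so $N\cap V=V$ holds automatically. Hence for $\psi\neq\gamma_{-1,-1}$ the rotary pairs with $|\rho|=2p$ are precisely the pairs $(vg,wh)$ with $g,h$ reflections generating $D$, with $v_+\neq 0$ relative to $g$, and with $w\in E_{-1}(h)$; the count is $\sum_{(g,h)}\bigl(|\{v:v_+\neq0\}|\cdot|E_{-1}(h)|\bigr)$ (each such count is positive, which also yields existence).

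It remains to evaluate this sum. For $d\ge 2$, Lemma~\ref{lem:irreducible-representation-dihedral}{\rm(iii)} gives $\dim E_1(x)=\dim E_{-1}(x)=d/2$ for every reflection $x$, so each good pair contributes $(p^d-p^{d/2})p^{d/2}=p^d(p^{d/2}-1)$, yielding {\rm(iv)}. For $\psi=\gamma_{1,1}$ every reflection acts trivially, so the contribution per pair is $(p-1)\cdot 1$, giving {\rm(i)}. For $\psi\in\{\gamma_{1,-1},\gamma_{-1,1}\}$ (so $r$ is even) the reflections split into the $r/2$ acting by $+1$ and the $r/2$ acting by $-1$; since $r$ is even a generating pair must have reflection indices of opposite parity, so $v_+\neq0$ forces $g$ to act by $+1$ and then $h$ acts by $-1$ with $E_{-1}(h)=V$. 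This gives $(r/2)\varphi(r)$ good pairs each contributing $(p-1)p$, hence {\rm(ii)}. Finally, for $\psi=\gamma_{-1,-1}$ every reflection acts by $-1$, so $E_1(x)=0$ and no order-$2p$ element projects to a reflection; there is therefore no rotary pair with $|\rho|=2p$. Here one instead notes $G\cong\D_{2pr}$ (the rotation subgroup $\langle c\rangle$ together with $V$ is cyclic of order $pr$, inverted by $b$) and counts the ordered pairs of reflections of $\D_{2pr}$ whose product has order $pr$, namely $pr\,\varphi(pr)$, giving {\rm(iii)}.

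I expect the main obstacle to be making the generation analysis airtight: verifying that $N\cap V$ really is a $D$-submodule, and that the image condition forces both $\bar\rho,\bar\tau$ to be reflections (excluding the central involution when $r$ is even). Once the $\rho^2=2v_+$ observation trivialises the condition $N\cap V=V$, the rest is bookkeeping, with the only genuine subtlety in the even degree-one case, where the parity constraint on the reflection indices is exactly what produces the factor $\tfrac12$ in {\rm(ii)}.
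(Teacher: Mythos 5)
Your proof is correct and follows essentially the same route as the paper's: both enumerate rotary pairs by splitting $\rho=vg$ and $\tau=wh$ into their $V$- and $D$-parts and counting via the eigenspace decomposition from Lemma~\ref{lem:irreducible-representation-dihedral}, with the same case analysis on $\psi$. Your explicit reduction of the generation condition (observing that $N\cap V$ is a $D$-submodule, so $\rho^2=2v_+\neq 0$ forces $N=G$ once the images generate $D$) supplies a justification that the paper's proof leaves implicit, but it does not change the underlying argument.
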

	\begin{proof}
		If $\psi=\gamma_{1,1}$, then $G = \ZZ_p \times \D_{2r}$.
		The set of elements of order $2p$ in $G$ is $\{ vx \mid v \in V \setminus \{1\},\ x \in D \text{ with } |x| = 2 \}$.
		This set has $(p-1)r$ elements when $r$ is odd or $(p-1)(r+1)$ elements when $r$ is even.
		For an element $\rho = vx \in G$ of order $2p$, given that $\l \rho, \tau \r = G$ for some involution $\tau$, the involution $x$ must not lie in $Z(D)$.
		Thus, there are always $(p-1)r$ elements of order $2p$ that can serve as the rotation $\rho$ in a rotary pair $(\rho, \tau)$ of $G$.
		Note that all involutions of $G$ are contained in the subgroup $\D$.
		For each such $\rho = vx$ with $x \notin Z(\D)$, an involution $\tau$ satisfying $\l \rho, \tau \r = G$ also satisfies $\l x, \tau \r = \D$.
		Therefore, there are $\varphi(r)$ choices for $\tau$ with a given $\rho$.
		Hence, there are $(p-1)r\varphi(r)$ rotary pairs of $G = \ZZ_p \times \D$, which proves statement~(i).
		
		Now, let $r$ be even and $\psi = \gamma_{1,-1}$ or $\gamma_{-1,1}$.
		First, assume $\psi = \gamma_{1,-1}$, which is equivalent to $\psi(a)=1$ and $\psi(b)=-1$. 
		There are $(p-1)r$ choices for $\rho$, as each order $2p$ element has the form $va^{(ab)^i}$ or $v(ab)^{r/2}$ for some $v \in \ZZ_p \setminus \{1\}$. 
		Given that $\l \rho, \tau \r = G$ with $|\tau|=2$, there are $p\varphi(r)/2$ choices for $\tau$, as $\tau$ has the form $wb^{(ab)^{i}}$ for some $w\in \mathbb{Z}_p$. 
		Thus, the number of rotary pairs of $G$ is $(p-1) p r \varphi(r)/2$. 
		Similarly, when $\psi = \gamma_{-1,1}$, the number of rotary pairs is also $(p-1) p r \varphi(r)/2$. 
		This completes case~(ii).

		
		
		

		If $\psi=\gamma_{-1,-1}$, then $G=\D_{2pr}$ is a dihedral group.
		Since $|\rho|=2$, we have that there are $pr$ ways to choose $\rho$. 
		Since $\l \rho,\tau\r=G$ and $|\tau|=2$, we conclude that we have $\varphi(pr)$ choices of $\tau$. 
		Therefore, the number of rotary pairs of $G$ is $pr\varphi(pr)$. 
		This completes case~(iii).
		

		Now, assume $d \geq 2$. 
		Let $c\in \D$ and let $v\in \mathbb{Z}_p^d$. 
		Note that $\rho = vc$ has order $2p$ if and only if $\psi(c)v\ne v^{-1}$.
		By Lemma~\ref{lem:irreducible-representation-dihedral}, the $(-1)$-eigenspace of $c$ has dimension $d/2$. 
		Therefore, there are $(p^d - p^{d/2})r$ choices for $\rho$. 
		Write $\tau = wz$ where $w \in \mathbb{Z}_p^d$ and $z \in \D$. 
		The condition $|\tau| = 2$ implies that $|z| = 2$ and $\psi(z)w = w^{-1}$, 
		giving $p^{d/2}$ choices for $w$ again by Lemma~\ref{lem:irreducible-representation-dihedral}.
		Since $\langle \rho, \tau \rangle = G$, there are $\varphi(r)$ choices for $z$. 
		Thus, the number of rotary pairs $(\rho, \tau)$ in case~(iv) is $p^d (p^{d/2} - 1) r \varphi(r)$.
	\end{proof}
	
	We point out that $G$ does not have a rotary pair $(\rho,\tau)$ such that $|\rho|=2$ when $\psi\ne\gamma_{-1,-1}$, and $G$ does not have a rotary pair $(\rho,\tau)$ such that $|\rho|=2p$ when $\psi=\gamma_{-1,-1}$.

	Now, we count the number of the $G$-rotary augmented PX maps.

	\begin{lemma}\label{lem-|RPairs|=|IrrReps|}
		
		Let $G=V\rtimes_\psi D=\ZZ_p^d\rtimes_\psi \D_{2r} $, where $\psi$ is irreducible.
		Then the number of $G$-rotary augmented PX maps is $1$ if $d=1$; $|\psi^{\Aut(\D_{2r})}|$ if $d\geqslant 2$.
	\end{lemma}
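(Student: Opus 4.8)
The plan is to turn the enumeration of $G$-rotary augmented PX maps into a pure orbit-counting problem and then evaluate it by combining the structure of $\Aut(G)$ from Lemmas~\ref{lem:fram-Auto-AutM} and \ref{Aut(D)_phi} with the rotary-pair counts of Lemma~\ref{lem:number}. First I would recall from Section~\ref{sec:arc-regular-maps} that the $G$-rotary maps are in bijection with the $\Aut(G)$-orbits on rotary pairs of $G$, and that $\Aut(G)$ acts \emph{semiregularly} on these pairs since each pair generates $G$. Restricting attention to augmented PX maps, Lemma~\ref{lem:arc-regular-PX-map} together with the valencies of the augmented PX graphs shows that such maps correspond precisely to rotary pairs $(\rho,\tau)$ with $|\rho|=2p$ when $\psi\ne\gamma_{-1,-1}$ (graphs $\operatorname{C}(p,r,s)$ or $\C^{(p)}_r$) and with $|\rho|=2$ when $\psi=\gamma_{-1,-1}$ (the cycle $\operatorname{C}_{pr}$). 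Because the action is semiregular, every orbit has size $|\Aut(G)|$, so the number of $G$-rotary augmented PX maps equals $N/|\Aut(G)|$, where $N$ is the number of relevant rotary pairs provided by Lemma~\ref{lem:number}.

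For $d\ge 2$ I would compute $|\Aut(G)|$ directly. Lemma~\ref{lem:fram-Auto-AutM} gives $\Aut(G)=\Inn_G(V)\rtimes\N_A(D)$ with $\Inn_G(V)\cong V$ of order $p^d$, and Lemma~\ref{Aut(D)_phi} gives $|\N_A(D)|=|\operatorname{End}_D(V)^\times|\cdot|\Aut(D)_\psi|$. By Lemma~\ref{lem:module-homomorphism}(i), $|\operatorname{End}_D(V)^\times|=p^{d/2}-1$, while orbit--stabilizer applied to the $\Aut(\D_{2r})$-conjugation action yields $|\Aut(D)_\psi|=|\Aut(\D_{2r})|/|\psi^{\Aut(\D_{2r})}|=r\varphi(r)/|\psi^{\Aut(\D_{2r})}|$, using $|\Aut(\D_{2r})|=r\varphi(r)$. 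Multiplying gives $|\Aut(G)|=p^d(p^{d/2}-1)r\varphi(r)/|\psi^{\Aut(\D_{2r})}|$, and dividing the count $N=p^d(p^{d/2}-1)r\varphi(r)$ of Lemma~\ref{lem:number}(iv) by this yields exactly $|\psi^{\Aut(\D_{2r})}|$, as claimed.

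For $d=1$ I would treat the linear representations case by case and check that the ratio is $1$ in each. For the nontrivial characters I reuse Lemmas~\ref{lem:fram-Auto-AutM} and \ref{Aut(D)_phi} with $|\operatorname{End}_D(V)^\times|=p-1$: for $\gamma_{1,-1},\gamma_{-1,1}$ (with $r$ even) these two characters form a single $\Aut(\D_{2r})$-orbit, so $|\Aut(D)_\psi|=r\varphi(r)/2$ and $|\Aut(G)|=(p-1)pr\varphi(r)/2$, matching Lemma~\ref{lem:number}(ii); for $\psi=\gamma_{-1,-1}$ we have $G\cong\D_{2pr}$ with $|\Aut(G)|=pr\varphi(pr)$, matching Lemma~\ref{lem:number}(iii). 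The trivial case $\psi=\gamma_{1,1}$ is the one place where Lemma~\ref{lem:fram-Auto-AutM} does not apply, so there I instead use $G\cong\ZZ_p\times\D_{2r}$ with coprime factor orders to get $\Aut(G)\cong\ZZ_{p-1}\times\Aut(\D_{2r})$ of order $(p-1)r\varphi(r)$, matching Lemma~\ref{lem:number}(i). In every subcase $N/|\Aut(G)|=1$.

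I expect the main obstacle to be bookkeeping rather than conceptual: correctly matching which order of $\rho$ produces an augmented PX map in each case (so that the right line of Lemma~\ref{lem:number} is invoked), and pinning down the $\Aut(\D_{2r})$-orbit sizes of the linear characters---in particular the fact that $\gamma_{-1,1}$ and $\gamma_{1,-1}$ are fused while $\gamma_{1,1}$ and $\gamma_{-1,-1}$ are fixed---since any slip there directly changes $|\Aut(D)_\psi|$ and hence the final ratio.
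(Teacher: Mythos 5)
Your proposal is correct and follows essentially the same route as the paper: count rotary pairs via Lemma~\ref{lem:number}, use semiregularity of $\Aut(G)$ to divide by $|\Aut(G)|$, and compute $|\Aut(G)|$ from Lemmas~\ref{lem:fram-Auto-AutM}, \ref{Aut(D)_phi} and \ref{lem:module-homomorphism}(i), treating the linear characters case by case (with the trivial representation handled separately via $\Aut(\ZZ_p\times\D_{2r})\cong\Aut(\ZZ_p)\times\Aut(\D_{2r})$). The only cosmetic difference is that you make explicit the fusion of $\gamma_{1,-1}$ and $\gamma_{-1,1}$ and the matching of $|\rho|$ to the graph type, both of which the paper also uses, just less explicitly.
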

	\begin{proof}
		Let $A=\Aut(G)$ and let $N=\operatorname{N}_A(D)$.
		
		First we discuss the cases where $d=1$.
		Suppose that $\psi=\gamma_{1,1}$, equivalently, $G=\mathbb{Z}_p\times\operatorname{D}_{2r}$. 
		Then $\Aut(G)=\Aut(\mathbb{Z}_p)\times\Aut(\operatorname{D}_{2r})$ since $p\nmid r$, and so $|\Aut(G)|=(p-1)r\varphi(r)$ is same to the number of rotary pairs of $G$ by Lemma~\ref{lem:number} {\rm (i)}.
		Hence there is a unique $G$-rotary augmented PX map.
		Assume that $\psi=\gamma_{-1,-1}$ yielding that $G\cong\operatorname{D}_{2pr}$ is dihedral.
		Then $|\Aut(G)|=pr\varphi(pr)$ equals  the number of rotary pairs of $G$ by Lemma~\ref{lem:number} {\rm (iii)}.
		Again there is a unique $G$-rotary augmented PX map.
		If $\psi=\gamma_{1,-1}\mbox{ or }\gamma_{-1,1}$, then the number of rotary pairs of $G$ is $(p-1)pr\varphi(r)/2$, by Lemma~\ref{lem:number} {\rm (ii)}.
		The automorphism group is $A\cong \ZZ_p^d\rtimes N$, by Lemma~\ref{lem:fram-Auto-AutM}. Since $\Aut(G)$ acts semiregularly on the set of rotary pairs of $G$, the number of isomorphism classes is 
		\[(p-1)r\varphi(r)/2|N|. \] It is clear that $\operatorname{End}_{\operatorname{D}_{2r}}(\mathbb{Z}_p)^{\times}\cong\mathbb{Z}_{p-1}$.
		Since $\gamma_{1,-1}\circ\sigma=\gamma_{-1,1}\circ\sigma=\gamma_{-1,1}\mbox{ or }\gamma_{1,-1}$ for every $\sigma\in\Aut(\operatorname{D}_{2r})$, we have that $|\Aut(\operatorname{D}_{2r})_\psi|=|\Aut(\operatorname{D}_{2r})|/2$.
		Therefore, $|N|=(p-1)r\varphi(r)/2$. It follows that the number of the maps is $1$. 
		
		Now, we suppose that $d\ge 2$.
		The number of rotary pairs of $G$ is given by Lemma~\ref{lem:number} {\rm (iv)}, as below:
		\[p^d(p^{d/2}-1)r\varphi(r).\] 
		There is  $A\cong \ZZ_p^d\rtimes N$ according to Lemma~\ref{lem:fram-Auto-AutM}.
		Since $A$ acts semiregularly on the set of rotary pairs of $G$, the number of the maps is 
		\begin{equation*}\label{equation:number}
			(p^{d/2}-1)r\varphi(r)/|N|.
		\end{equation*}
		By Lemma~\ref{Aut(D)_phi}, we have that $N/\mathrm{End}_{\D_{2r}}(\ZZ_p^d)^\times\cong \Aut(\D_{2r})_\psi$.
		By Lemma~\ref{lem:module-homomorphism} {\rm (i)}, $\mathrm{End}_{\D_{2r}}(\ZZ_p^d)^\times\cong \ZZ_{p^{d/2}-1}$.
		It follows that the number of the maps is
		\[r\varphi(r)/|\Aut(\D_{2r})_\psi|=|\Aut(\D_{2r})|/|\Aut(\D_{2r})_\psi|=|\psi^{\Aut(\D_{2r})}|.\] 
	\end{proof}
	By Lemma~\ref{Iso-rep&groups-2}, $\mathbb{Z}_p\rtimes_{\gamma_{1,-1}}\operatorname{D}_{2r}\cong \mathbb{Z}_p\rtimes_{\gamma_{-1,1}}\operatorname{D}_{2r}$.
	Then $\gamma_{1,-1}$ and $\gamma_{-1,1}$ correspond to the same rotary augmented PX map.
	We therefore have the following corollary directly.
	\begin{corollary}\label{coro:trivial-PX-map}
		There is a unique rotary map whose underlying graph is $\C^*(p,r,0,-1)$.
		Moreover, this map is $(\ZZ_p\rtimes_{\gamma_{-1,-1}} \D_{2r})$-rotary.
		
		The number of rotary maps whose underlying graph is $\C^*(p,r,0,1)$ is $1$  if $r$ is odd and $2$ if $r$ is even.
		Moreover, these maps are either $(\ZZ_p\rtimes_{\gamma_{1,1}}\D_{2r})$-rotary or $(\ZZ_p\rtimes_{\gamma_{1,-1}}\D_{2r})$-rotary.
	\end{corollary}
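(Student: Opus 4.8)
The plan is to read off both claims of Corollary~\ref{coro:trivial-PX-map} from the enumeration already assembled in Lemma~\ref{lem-|RPairs|=|IrrReps|} together with the structural results of Theorem~\ref{thm:arc-regular-PX-graph} and Corollary~\ref{cor:aut-map}. The key observation is that a rotary map whose underlying graph is $\C^*(p,r,0,\delta)$ has $s=0$, so by Corollary~\ref{cor:aut-map} its automorphism group $G$ is isomorphic to $\ZZ_p\rtimes_\psi\D_{2r}$ for some degree-$1$ representation $\psi$ (degree $d=s+1=1$). Thus the only candidates for $G$ are the groups arising from the four (or two, when $r$ is odd) one-dimensional representations listed in Lemma~\ref{lem:irreducible-representation-dihedral}, namely $\gamma_{1,1},\gamma_{-1,-1}$ always, and additionally $\gamma_{1,-1},\gamma_{-1,1}$ when $r$ is even. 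The task reduces to matching each such $G$ to the correct augmented graph $\C^*(p,r,0,\delta)$ and counting the resulting maps.

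First I would handle the $\delta=-1$ case. By Definition~\ref{def:augPXgraph}, $\C^*(p,r,0,-1)=\operatorname{C}_{pr}$, and the discussion immediately preceding Lemma~\ref{lem:ArcReg-Auto-PXGraph-2} records that its arc-regular automorphism subgroup is $\operatorname{D}_{2pr}=\ZZ_p\rtimes\D_{2r}$, which is exactly $\ZZ_p\rtimes_{\gamma_{-1,-1}}\D_{2r}$ since this is the unique degree-$1$ representation making the semidirect product dihedral. Lemma~\ref{lem-|RPairs|=|IrrReps|} asserts there is a \emph{unique} $G$-rotary augmented PX map for $G=\ZZ_p\rtimes_{\gamma_{-1,-1}}\D_{2r}$ (the $d=1$ count). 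Because the graph $\operatorname{C}_{pr}$ forces this single group, the uniqueness of the map whose underlying graph is $\C^*(p,r,0,-1)$ follows, together with the identification of its automorphism group.

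Next I would treat $\delta=1$, where $\C^*(p,r,0,1)=\C^{(p)}_r$. The remaining degree-$1$ representations are $\gamma_{1,1}$ (always) and $\gamma_{1,-1},\gamma_{-1,1}$ (when $r$ is even); each gives a group $G=\ZZ_p\rtimes_\psi\D_{2r}$ with a unique $G$-rotary augmented PX map by Lemma~\ref{lem-|RPairs|=|IrrReps|}. The crucial reduction is the remark stated just before the corollary: by Lemma~\ref{Iso-rep&groups-2}, $\ZZ_p\rtimes_{\gamma_{1,-1}}\D_{2r}\cong\ZZ_p\rtimes_{\gamma_{-1,1}}\D_{2r}$, so $\gamma_{1,-1}$ and $\gamma_{-1,1}$ yield isomorphic groups and hence the same map. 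Consequently, when $r$ is odd only $\gamma_{1,1}$ is available, giving exactly one map, and when $r$ is even the two distinct groups $\ZZ_p\rtimes_{\gamma_{1,1}}\D_{2r}$ and $\ZZ_p\rtimes_{\gamma_{1,-1}}\D_{2r}$ each contribute one, giving exactly two. This matches the claimed counts and the stated list of automorphism groups.

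The step I expect to require the most care is verifying that the maps for these degree-$1$ groups genuinely have underlying graph $\C^*(p,r,0,1)$ rather than $\C^*(p,r,0,-1)$ — that is, that $\gamma_{1,1}$ and $\gamma_{1,-1}$ produce multicycles with $p$ parallel edges while only $\gamma_{-1,-1}$ produces $\operatorname{C}_{pr}$. This is a matter of computing the edge multiplicity $|G_{\alpha\beta}|$ via Lemma~\ref{lem:arc-regular-PX-map}: when $\psi\ne\gamma_{-1,-1}$ the rotary pair has $|\rho|=2p$ (Lemma~\ref{lem:number}) and one checks $|G_{\alpha\beta}|=p$, forcing the multicycle $\C^*(p,r,0,1)$; whereas $\gamma_{-1,-1}$ gives $|\rho|=2$ and the simple cycle case. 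Once this dichotomy is confirmed, the assignment of each representation to its graph is unambiguous and the corollary follows.
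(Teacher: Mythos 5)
Your proposal is correct and follows essentially the same route as the paper: the paper derives this corollary directly from Lemma~\ref{lem-|RPairs|=|IrrReps|} (one map per degree-one representation), the identification $\ZZ_p\rtimes_{\gamma_{1,-1}}\D_{2r}\cong\ZZ_p\rtimes_{\gamma_{-1,1}}\D_{2r}$ via Lemma~\ref{Iso-rep&groups-2}, and the graph assignments recorded in Table~\ref{tab:table} (which rest on Lemma~\ref{lem:arc-regular-PX-map} and Theorem~\ref{thm:arc-regular-PX-graph}, exactly as you note). Your only unstated point is that the groups for $\gamma_{1,1}$ and $\gamma_{1,-1}$ are non-isomorphic so their maps do not coincide, but this is immediate from Lemma~\ref{Iso-rep&groups-2} and is likewise left implicit in the paper.
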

	
	Now we give Table~\ref{tab:table}, which summarizes key results from Lemma~\ref{lem-|RPairs|=|IrrReps|}.
	In this table, $t$ is the number of $G$-rotary augmented maps, $\Gamma$ is the underlying graph of the map and $d$ is the degree of $\psi$.
	
	\begin{table}[ht]
		\centering
		\begin{tabular}{ccccc}
			\toprule
			$r$ & $\operatorname{Irr}(D_{2r})$ &  $G$ & $\Gamma$ & t\\
			\midrule
			$r$ is odd&$\gamma_{1,1}$ & $\mathbb{Z}_p\times\operatorname{D}_{2r}$ & $\operatorname{C}^*(p,r,0)$& 1\\
			&$\gamma_{-1,-1}$&$\D_{2pr}$&$\operatorname{C}^*(p,r,0,-1)$&1\\
			&$\psi$ ($d\ge 2$)&$\mathbb{Z}_p^d\rtimes_{\psi}\operatorname{D}_{2r}$&$\operatorname{C}(p,r,d-1)$&$|\psi^{\Aut(\operatorname{D}_{2r})}|$ \\
			\midrule
			$r$ is even&$\gamma_{1,1}$ & $\mathbb{Z}_p\times\operatorname{D}_{2r}$ & $\operatorname{C}^*(p,r,0)$& 1\\
			&$\gamma_{1,-1},\gamma_{-1,1}$&$\mathbb{Z}_p\rtimes_{\gamma_{1,-1}}\operatorname{D}_{2r}$& $\operatorname{C}^*(p,r,0)$& 1\\
			&$\gamma_{-1,-1}$&$\D_{2pr}$&$\operatorname{C}^*(p,r,0,-1)$&1\\
			&$\psi$ ($d\ge 2$)&$\mathbb{Z}_p^d\rtimes_{\psi}\operatorname{D}_{2r}$&$\operatorname{C}(p,r,d-1)$&$|\psi^{\Aut(\operatorname{D}_{2r})}|$ \\
			\bottomrule
		\end{tabular}
		\caption{Irreducible rotary augmented PX maps}
		\label{tab:table}
	\end{table}

	By Lemma~\ref{Iso-rep&groups-2}, there is a one-to-one correspondence between isomorphism classes of groups of the form $\mathbb{Z}_p^d\rtimes_\psi\D_{2r}$ where
	$\psi$ is irreducible and the $\Aut(\D_{2r})$-orbits of $\operatorname{Irr}(\D_{2r})$.
	Then Lemma~\ref{lem-|RPairs|=|IrrReps|} implies the following result.
	
	\begin{theorem}\label{|RPairs|=|IrrReps|}
		The number of irreducible rotary augmented PX map of length $r$ is
		\begin{enumerate}[{\rm (i)}]
			\item  $|\mathrm{Irr}(D_{2r})|$ if $r$ is odd;\vskip0.1in
			
			\item $|\mathrm{Irr}(D_{2r})|-1$ if $r$ is even.
		\end{enumerate}
	\end{theorem}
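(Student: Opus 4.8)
The plan is to express the total count as a sum of the per-group counts supplied by Lemma~\ref{lem-|RPairs|=|IrrReps|}, taken over all isomorphism types of the admissible groups. First I would note that, by Definition~\ref{def:irr-PXmap} together with Corollary~\ref{cor:aut-map}, every irreducible rotary augmented PX map of length $r$ has (orientation-preserving) automorphism group $G\cong\ZZ_p^d\rtimes_\psi\D_{2r}$ for some irreducible $\psi$, and since this group is recovered from the map, maps with non-isomorphic $G$ are themselves non-isomorphic. The paragraph preceding the theorem, via Lemma~\ref{Iso-rep&groups-2}, identifies the isomorphism classes of such groups with the $\Aut(\D_{2r})$-orbits on $\operatorname{Irr}(\D_{2r})$. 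Hence the total number of maps equals
\[
\sum_{\mathcal{O}}\bigl(\text{number of $G_{\mathcal{O}}$-rotary augmented PX maps}\bigr),
\]
where $\mathcal{O}$ ranges over the $\Aut(\D_{2r})$-orbits on $\operatorname{Irr}(\D_{2r})$ and $G_{\mathcal{O}}$ is the group attached to $\mathcal{O}$.

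Next I would split this sum according to $\deg(\psi)$, which is legitimate because an automorphism of $\D_{2r}$ preserves the degree of a representation, so each orbit is homogeneous in degree. For an orbit $\mathcal{O}$ of degree-$\ge 2$ representations, Lemma~\ref{lem-|RPairs|=|IrrReps|} gives $|\mathcal{O}|$ maps, so these orbits contribute $\sum_{\deg\ge 2}|\mathcal{O}|$, which is exactly the number $N_{\ge 2}$ of degree-$\ge 2$ representations in $\operatorname{Irr}(\D_{2r})$. For an orbit of degree-$1$ representations the same lemma gives a single map, so these orbits contribute $O_1$, the number of degree-$1$ orbits. Writing $|\operatorname{Irr}(\D_{2r})|=N_1+N_{\ge 2}$ with $N_1$ the number of degree-$1$ representations, the total becomes $O_1+N_{\ge 2}=O_1+|\operatorname{Irr}(\D_{2r})|-N_1$.

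It then remains to evaluate $N_1$ and $O_1$ in the two parity cases. By Lemma~\ref{lem:irreducible-representation-dihedral}(i)--(ii) the degree-$1$ representations are $\{\gamma_{1,1},\gamma_{-1,-1}\}$ when $r$ is odd, so $N_1=2$, and $\{\gamma_{1,1},\gamma_{-1,-1},\gamma_{1,-1},\gamma_{-1,1}\}$ when $r$ is even, so $N_1=4$. For $r$ odd each of the two fixed characters forms its own orbit, giving $O_1=2$. For $r$ even, $\gamma_{1,1}$ and $\gamma_{-1,-1}$ are again fixed, while the computation $\gamma_{1,-1}\circ\sigma,\gamma_{-1,1}\circ\sigma\in\{\gamma_{1,-1},\gamma_{-1,1}\}$ recorded in the proof of Lemma~\ref{lem-|RPairs|=|IrrReps|} (equivalently, the isomorphism $\ZZ_p\rtimes_{\gamma_{1,-1}}\D_{2r}\cong\ZZ_p\rtimes_{\gamma_{-1,1}}\D_{2r}$ of Corollary~\ref{coro:trivial-PX-map}) shows that $\gamma_{1,-1}$ and $\gamma_{-1,1}$ form a single orbit, giving $O_1=3$. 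Substituting, $r$ odd yields $2+|\operatorname{Irr}(\D_{2r})|-2=|\operatorname{Irr}(\D_{2r})|$ and $r$ even yields $3+|\operatorname{Irr}(\D_{2r})|-4=|\operatorname{Irr}(\D_{2r})|-1$, as claimed.

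The only genuinely delicate point is the orbit bookkeeping for the degree-$1$ characters, specifically verifying that $\gamma_{1,-1}$ and $\gamma_{-1,1}$ merge into one orbit, hence one map, precisely when $r$ is even; everything else is the formal identity $\sum_{\deg\ge 2}|\mathcal{O}|=N_{\ge 2}$ combined with the per-orbit counts already established in Lemma~\ref{lem-|RPairs|=|IrrReps|}.
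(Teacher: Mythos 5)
Your proposal follows exactly the paper's route: the paper also identifies the isomorphism classes of groups $\ZZ_p^d\rtimes_\psi\D_{2r}$ with the $\Aut(\D_{2r})$-orbits on $\operatorname{Irr}(\D_{2r})$ via Lemma~\ref{Iso-rep&groups-2} and then sums the per-group counts of Lemma~\ref{lem-|RPairs|=|IrrReps|}, leaving the final bookkeeping (which you carry out explicitly, including the merging of $\gamma_{1,-1}$ and $\gamma_{-1,1}$ into one orbit for $r$ even) to the reader and to Table~\ref{tab:table}. Your argument is correct and simply fills in the arithmetic the paper states implicitly.
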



	Recall that for an involution $x$ of the group $G=V\rtimes_\psi D=\ZZ_p^d\rtimes_\psi\D_{2r}$,  $\rho_x$ denotes an arbitrary element in the set $\{ vx\ |\ 1\neq v\in \C_V(x) \}$. 
	Denote the set of all $\RM(G,\rho_x,y)$ by $\mathcal{RM}_G$.
	Next we show that Construction~\ref{Cons:PX-map} gives all rotary PX maps with respect to $G$, where $G=\ZZ_p^d\rtimes_\psi \D_{2r}$ with $d\geqslant 2$.
	
	\begin{proposition}\label{prop:bijection-ggmap-reporbits}
		Let $x_1,y_1,x_2,y_2\in D$ be involutions such that $D=\langle x_1,y_1\rangle=\langle x_2,y_2\rangle$.
		Let $\sigma\in\Aut(D)$ be given by $\sigma(x_1)=x_2$ and $\sigma(y_1)=y_2$.
		Then
		\begin{enumerate}[{\rm (i)}]
			\item  $\RM(G,\rho_{x_1},y_1)\cong\RM(G,\rho_{x_2},y_2)$
			if and only if $\psi\cong\psi\circ \sigma$; in particular, $|\mathcal{RM}_G|=|\psi^{\Aut(D)}|$; \vskip0.1in
   
			\item  the mapping \[\begin{aligned}
				\psi^{\Aut(\D_{2r})}&\rightarrow \mathcal{RM}_G,\\
				\psi\circ\eta &\mapsto \RM(G,\rho_{\eta(x_1)},\eta(y_1))
			\end{aligned}\]
			is a one-to-one correspondence.
		\end{enumerate}
	\end{proposition}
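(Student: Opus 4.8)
The plan is to reduce everything to the structure of $\Aut(G)$ established in Lemma~\ref{lem:fram-Auto-AutM} and Lemma~\ref{Aut(D)_phi}, together with the fact that a $G$-rotary map is determined up to isomorphism by the $\Aut(G)$-orbit of its rotary pair. First I record two preliminary observations. The map $\sigma$ is a genuine automorphism of $D$: since both $(x_1,y_1)$ and $(x_2,y_2)$ are ordered pairs of involutions generating $\D_{2r}$, each product $x_iy_i$ generates the rotation subgroup and so has order $r$; hence $x_1\mapsto x_2,\ y_1\mapsto y_2$ respects the relations $s^2=t^2=(st)^r=1$ and extends to an automorphism. The same observation shows that $\Aut(D)$ acts regularly on the set of ordered generating involution pairs (transitively, since any two such pairs are related by a relation-respecting bijection; freely, since an automorphism fixing two generators is the identity). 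This regular action will drive the counting.

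For the forward implication of (i), suppose $\psi\cong\psi\circ\sigma$, so that $\sigma\in\Aut(D)_\psi=\operatorname{im}(\kappa)$ by Lemma~\ref{Aut(D)_phi}. I would choose $f\in\N_A(D)$ with $f|_D=\sigma$. As $V$ is characteristic we have $f(V)=V$, and from $v_1\in\C_V(x_1)$, i.e. $x_1v_1x_1^{-1}=v_1$, applying $f$ gives $\sigma(x_1)f(v_1)\sigma(x_1)^{-1}=f(v_1)$, so $f(v_1)\in\C_V(\sigma(x_1))=\C_V(x_2)$ and $f(v_1)\neq 1$. Writing $\rho_{x_1}=v_1x_1$, we get $f(\rho_{x_1})=f(v_1)x_2$, a legitimate choice of $\rho_{x_2}$, while $f(y_1)=y_2$. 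Proposition~\ref{prop:iso-RotaMap} then yields $\RM(G,\rho_{x_1},y_1)\cong\RM(G,f(v_1)x_2,y_2)$, and Proposition~\ref{prop:equivalence-ggMap} identifies the latter with $\RM(G,\rho_{x_2},y_2)$.

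The converse is the crux. From $\RM(G,\rho_{x_1},y_1)\cong\RM(G,\rho_{x_2},y_2)$ and the bijection between $G$-rotary maps and $\Aut(G)$-orbits on rotary pairs, there is $g\in\Aut(G)$ with $g(\rho_{x_1})=\rho_{x_2}$ and $g(y_1)=y_2$. Using Lemma~\ref{lem:fram-Auto-AutM} I decompose $g=\tilde{w}\circ h$ with $w\in V$ and $h\in\N_A(D)$, and set $\sigma_h=h|_D$. The key point is that conjugation by $w\in V$ leaves the $D$-component of any element $vx$ ($x\in D$) unchanged, since $w\,x\,w^{-1}=\bigl(w\cdot{}^{x}(w^{-1})\bigr)x$ with the bracketed factor in $V$. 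Hence reading off the $D$-parts of the equalities $g(y_1)=y_2$ and $g(\rho_{x_1})=\rho_{x_2}$ forces $\sigma_h(y_1)=y_2$ and $\sigma_h(x_1)=x_2$; as $D=\langle x_1,y_1\rangle$ this gives $\sigma_h=\sigma$. Since $\sigma_h=\kappa(h)\in\Aut(D)_\psi$, we conclude $\psi\cong\psi\circ\sigma$. I expect this bookkeeping to be the main obstacle: one must verify that the inner factor $\tilde{w}$ contributes nothing to the $D$-components, so that the entire discrepancy between the two pairs is carried by $\sigma_h$.

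Finally, for the counting in (i) and for (ii): every element of $\mathcal{RM}_G$ has the form $\RM(G,\rho_{\eta(x_1)},\eta(y_1))$ for a unique $\eta\in\Aut(D)$ by the regular action above. By the equivalence just proved, the maps indexed by $\eta$ and $\eta'$ coincide iff $\eta'\eta^{-1}\in\Aut(D)_\psi$, i.e. iff $\Aut(D)_\psi\eta=\Aut(D)_\psi\eta'$, whence $|\mathcal{RM}_G|=[\Aut(D):\Aut(D)_\psi]=|\psi^{\Aut(D)}|$. The same dictionary shows that $\psi\circ\eta\mapsto\RM(G,\rho_{\eta(x_1)},\eta(y_1))$ is well-defined and injective on isomorphism classes, because $\psi\circ\eta\cong\psi\circ\eta'$ holds precisely when $\eta'\eta^{-1}\in\Aut(D)_\psi$; it is surjective since every generating involution pair arises as $(\eta(x_1),\eta(y_1))$. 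This establishes the one-to-one correspondence in (ii).
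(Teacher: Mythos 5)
Your argument is correct, and in outline it follows the paper: both directions ultimately rest on Lemma~\ref{Aut(D)_phi} (identifying $\Aut(D)_\psi$ with the image of the restriction map on $\N_A(D)$) together with Proposition~\ref{prop:equivalence-ggMap} to absorb the ambiguity in the choice of $v\in \C_V(x)$, and your counting/bijection argument for (ii) via the regular action of $\Aut(D)$ on ordered generating involution pairs matches the paper's count $|\mathcal{RM}_G|=[\Aut(D):\Aut(D)_\psi]$. The one place where you take a genuinely different route is the direction ``isomorphic $\Rightarrow$ $\psi\cong\psi\circ\sigma$'': you invoke the full semidirect decomposition $\Aut(G)=\Inn_G(V)\rtimes\N_A(D)$ from Lemma~\ref{lem:fram-Auto-AutM}, write $g=\tilde{w}\circ h$, and check that the inner factor $\tilde{w}$ does not disturb $D$-components, so that the $D$-part of the rotary pair is transported by $\kappa(h)\in\Aut(D)_\psi$. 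The paper instead uses a short computational trick: since $v_1$ commutes with $x_1$ and has order $p$, one has $(v_1x_1)^p=x_1$, so any isomorphism $f$ of the two maps satisfies $f(x_1)=f((v_1x_1)^p)=(v_2x_2)^p=x_2$ directly; this shows $f(D)=D$ and $f|_D=\sigma$ without needing the structure of $\Aut(G)$ at all (in particular it does not need $\psi$ non-trivial, which Lemma~\ref{lem:fram-Auto-AutM} requires — harmless here since $d\ge 2$, but worth noting). Your bookkeeping with $D$-components is sound (conjugation by $w\in V$ sends $vx$ to $(wv\cdot{}^{x}(w^{-1}))x$, leaving the $D$-part fixed), so both proofs are valid; the paper's is just more self-contained at this step.
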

	\begin{proof}
			%
			%
			%
		Suppose that these two rotary maps are isomorphic.
		Then there is a $f\in \Aut(G)$ such that $f(v_1x_1)=v_2x_2$ and $f(y_1)=y_2$.
		Therefore, \[f(x_1^p)=f((v_1x_1)^p)=(v_2x_2)^p=x_2^p.\]
		Since $|x_1|=|x_2|=2$, we have that $f(x_1)=x_2$
		which implies that $f(D)=D$ and $f|_D=\sigma$.
		By Lemma~\ref{Aut(D)_phi}, $f|_D\in\Aut(D)_\psi$
		which means that $\psi\cong\psi\circ\sigma$.
		Now suppose that $\psi\cong\psi\circ\sigma$.
		Denote by $U\cong\mathbb{Z}_p^d$ the $D$-module induced by $\psi\circ\sigma$.
		Then there exists $D$-module isomorphism $f_0:V\rightarrow U$.
		Define $f:G\rightarrow G$ by \[vb\mapsto f_0(v)\sigma(b).\]
		One can check that $f$ is an automorphism.
		Moreover, $f(x_1)=x_2$ and $f(y_1)=y_2$.
		Note that $1\ne f(v_1)\in\operatorname{C}_V(x_2)$.
		Hence $f(v_1x_1)=f(v_1)x_2$.
		We have that $\RM(G,v_1x_1,y_1)\cong\RM(G,f(v_1)x_2,y_2)$.
		By Proposition~\ref{prop:equivalence-ggMap}, $\RM(G,\rho_{x_1},y_1)\cong\RM(G,\rho_{x_2},y_2)$. 
		Therefore, $|\mathcal{RM}_G|=|\Aut(G)|/|\Aut(G)_\psi|=|\psi^{\Aut(\D_{2r})}|$ and
		this completes the proof.
	\end{proof}

	\begin{corollary}\label{coro:isomorphic-ggMap}
		Each $G$-rotary PX map lies in $\mathcal{RM}_G$, and		
		the mapping in Proposition~\ref{prop:equivalence-ggMap} is a one-to-one correspondence between $G$-rotary PX maps and $\psi^{\Aut(\D_{2r})}$.
		
		%
	\end{corollary}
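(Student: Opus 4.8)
The plan is to prove the corollary by a cardinality comparison, leveraging the two counting results already established. First I would verify the inclusion of $\mathcal{RM}_G$ into the set of $G$-rotary PX maps. Take any $\RM(G,\rho_x,y)\in\mathcal{RM}_G$, where $\rho_x=vx$ with $1\neq v\in\C_V(x)$ and $x,y$ are involutions generating $D$. Writing $V$ additively and using $v\in E_1(x)$ together with $p$ odd, a direct computation gives $(vx)^2=v+\psi(x)(v)=2v\neq 0$, so $|\rho_x|=2p$. Moreover $(\rho_x,y)$ is genuinely a rotary pair: the subgroup $\langle\rho_x,y\rangle$ contains $(\rho_x)^2=2v$, hence $v$, and hence $x=v^{-1}\rho_x$, so it contains $D=\langle x,y\rangle$; since $\psi$ is irreducible, the $D$-submodule generated by the nonzero vector $v$ is all of $V$, whence $\langle\rho_x,y\rangle=G$. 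By the Construction together with Lemma~\ref{lem:arc-regular-PX-map}, the underlying graph is $\operatorname{C}^*(p,r,d-1)=\operatorname{C}(p,r,d-1)$, a genuine PX graph because $d\geq 2$ forces $s=d-1\geq 1$. Thus every element of $\mathcal{RM}_G$ is a $G$-rotary PX map.

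Next I would compare cardinalities. By Proposition~\ref{prop:bijection-ggmap-reporbits}~(i) we have $|\mathcal{RM}_G|=|\psi^{\Aut(\D_{2r})}|$, while Lemma~\ref{lem-|RPairs|=|IrrReps|} shows that the total number of $G$-rotary augmented PX maps is also $|\psi^{\Aut(\D_{2r})}|$, using $d\geq 2$. Since for $d\geq 2$ every augmented PX graph arising here is in fact a PX graph, the notions of augmented PX map and PX map coincide, so these two counts refer to the same ambient set of isomorphism classes. As $\mathcal{RM}_G$ is a subset of this finite set and has exactly its cardinality, the inclusion from the first step must be an equality, giving $\mathcal{RM}_G=\{G\text{-rotary PX maps}\}$ and establishing the first assertion of the corollary.

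Finally, the one-to-one correspondence is immediate: Proposition~\ref{prop:bijection-ggmap-reporbits}~(ii) already furnishes a bijection $\psi^{\Aut(\D_{2r})}\to\mathcal{RM}_G$, and by the previous paragraph its image is precisely the set of $G$-rotary PX maps. Composing with this identification yields the desired correspondence between $G$-rotary PX maps and $\psi^{\Aut(\D_{2r})}$.

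The step I expect to carry the genuine content, as opposed to bookkeeping, is the inclusion $\mathcal{RM}_G\subseteq\{G\text{-rotary PX maps}\}$, and within it the use of the irreducibility of $\psi$ to guarantee both that $(\rho_x,y)$ generates all of $G$ and that $|\rho_x|=2p$; once this is secured, equality is forced purely by the matching counts. An alternative, more hands-on route would avoid counting altogether: given an arbitrary $G$-rotary PX map $\RM(G,\rho,\tau)$ one has $|\rho|=2p$, so $\rho=vg$ with $g\in D$ an involution, and decomposing $v=v_++v_-$ along $E_1(g)\oplus E_{-1}(g)$ and conjugating by a suitable $w\in E_{-1}(g)$ replaces $\rho$ by $v_+g=\rho_g$ with $v_+\neq 0$. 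The obstacle on this route is the extra work of simultaneously bringing $\tau$ into $D$, which is precisely why I would favour the cardinality argument.
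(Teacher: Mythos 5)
Your proposal is correct and follows essentially the same route as the paper: both establish the corollary by matching the count $|\mathcal{RM}_G|=|\psi^{\Aut(\D_{2r})}|$ from Proposition~\ref{prop:bijection-ggmap-reporbits} against the count of $G$-rotary (augmented) PX maps from Lemma~\ref{lem-|RPairs|=|IrrReps|}, forcing the inclusion to be an equality. The only difference is that you explicitly verify the prerequisite inclusion $\mathcal{RM}_G\subseteq\{G\text{-rotary PX maps}\}$ (order of $\rho_x$, generation via irreducibility), which the paper leaves to the discussion surrounding Construction~\ref{Cons:PX-map}; this is a welcome, but not structurally different, addition.
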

	\begin{proof}
		Recall that $\RM(G,\rho_x,y)$ such that $x$ and $y$ are two involutions generating $D$ equals
		\[|\Aut(\operatorname{D}_{2r})|/|\Aut(\operatorname{D}_{2r})_\psi|=|\psi^{\Aut(\operatorname{D}_{2r})}|.\]
  This count matches the number of isomorphism classes of rotary maps $\RM(G,\rho,\tau)$ where $|\rho|=2p$, as given by Theorem~\ref{|RPairs|=|IrrReps|}.
		This completes the proof.
		%
	\end{proof}

	\begin{remark}
		Corollary \ref{coro:isomorphic-ggMap} actually gives a one-to-one correspondence between irreducible rotary PX maps of length $r$ and irreducible representations of $\D_{2r}$ of degree greater than $1$.
		Together with Corollary \ref{coro:trivial-PX-map}, we have that
		there is a one-to-one correspondence between irreducible rotary augmented PX maps of length $r$ and irreducible representations of $\D_{2r}$ except $\gamma_{-1,1}$.
	\end{remark}

	\section{The decomposition of rotary PX maps}\label{sec}

	In this section, we establish that every rotary augmented PX map decomposes as a direct product of irreducible rotary augmented PX maps, and that this decomposition is unique up to isomorphism. 
	Recall that we always assume \( p \) is an odd prime, \( r \geq 3 \), and \( p \nmid r \).
	Thus, the group algebra $\mathbb{F}_p\D_{2r}$ is semisimple.
	Applying Maschke's theorem, we prove the existence of such decomposition in Theorem~\ref{thm:existence-decomposition}.
	Lemma~\ref{lem:unqiue-minimal-normal-subgroup} plays a key role in proving the uniqueness of the decomposition.

	We first study quotient maps of rotary PX maps. Recall the definition of augmented PX graphs (see Definition~\ref{def:augPXgraph}).

	\begin{lemma}\label{lem:G-AT-QuoMap}
		Let $\mathcal{M}$ be a $G$-rotary map whose underlying graph is $\operatorname{C}(p,r,s)$.
		Suppose that $V$ is a normal $p$-subgroup of $G$ satisfying $|V|<p^{s+1}$.
		Then $\mathcal{M}/V$ is a $G/V$-rotary augmented PX map of length $r$. 
	\end{lemma}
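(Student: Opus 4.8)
The plan is to realise $\mathcal{M}/V$ as a $(G/V)$-rotary map through its rotary pair, to identify the group $G/V$ explicitly, and then to feed the outcome into Lemma~\ref{lem:arc-regular-PX-map}, handling separately the degenerate case in which passing to the quotient collapses the order of the rotation. First I would locate $V$ inside the group. By Corollary~\ref{cor:aut-map}, $G\cong\ZZ_p^{s+1}\rtimes\operatorname{D}_{2r}$, so write $G=W\rtimes\operatorname{D}_{2r}$ with $W\cong\ZZ_p^{s+1}$. Since $p\nmid r$, $W$ is the unique normal Sylow $p$-subgroup of $G$, whence the normal $p$-subgroup $V$ satisfies $V\le W$, and $|V|<p^{s+1}$ forces $W/V\cong\ZZ_p^{k}$ with $k=(s+1)-\dim_{\mathbb{F}_p}V\ge 1$. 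As $\operatorname{D}_{2r}\cap V\le\operatorname{D}_{2r}\cap W=1$, the complement survives the quotient, giving $G/V\cong(W/V)\rtimes\operatorname{D}_{2r}\cong\ZZ_p^{k}\rtimes\operatorname{D}_{2r}$.

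Next I would check that $\mathcal{M}/V$ is genuinely a rotary map. Writing $\mathcal{M}=\RM(G,\rho,\tau)$, the vertex stabiliser $\langle\rho\rangle$ is cyclic of order equal to the valency $2p$ of $\operatorname{C}(p,r,s)$, so $|\rho|=2p$ and $|\tau|=2$. Because $p$ is odd and $V$ is a $p$-group, $V$ contains no element of order $2$ nor of order $2p$; hence $\rho,\tau\notin V$, the quotient map $\RM(G/V,\rho V,\tau V)$ is defined, and $(\rho V,\tau V)$ is a rotary pair of $G/V$ since it generates $G/V$ and $|\tau V|=2$. By \cite[Construction~4.2]{LiPraegerS-A}, $\mathcal{M}/V$ is then $(G/V)$-rotary, and in particular $(G/V)$-arc-regular; denote its underlying graph by $\Gamma'$.

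The decisive quantity is the valency $|\rho V|$. Since $\langle\rho V\rangle\cong\langle\rho\rangle/(\langle\rho\rangle\cap V)$ and $\langle\rho\rangle\cap V$ is a $p$-subgroup of $\langle\rho\rangle\cong\ZZ_{2p}$, we have $\langle\rho\rangle\cap V\in\{1,\langle\rho^2\rangle\}$, so $|\rho V|\in\{2p,2\}$. If $|\rho V|=2p$, then applying Lemma~\ref{lem:arc-regular-PX-map} to the $(\ZZ_p^{k}\rtimes\operatorname{D}_{2r})$-arc-regular map $\mathcal{M}/V$ immediately gives $\Gamma'\cong\operatorname{C}^*(p,r,k-1)$, an augmented PX graph of length $r$; this already subsumes the trivial case $V=1$.

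The main obstacle is the residual case $|\rho V|=2$, i.e.\ $\rho^2\in V$, which is precisely the collapse that the augmentation is designed to absorb. Here $\Gamma'$ is a connected $2$-valent graph on $|G/V|/2=rp^{k}\ge 9$ vertices, hence a simple cycle $\operatorname{C}_{rp^{k}}$, and its arc-regular group $G/V$ must be all of $\Aut(\operatorname{C}_{rp^{k}})=\operatorname{D}_{2rp^{k}}$. Comparing Sylow $p$-subgroups then forces $k=1$, since the Sylow $p$-subgroup of $\ZZ_p^{k}\rtimes\operatorname{D}_{2r}$ is elementary abelian of rank $k$ while that of $\operatorname{D}_{2rp^{k}}$ is cyclic. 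Thus $k=1$ and $\Gamma'\cong\operatorname{C}_{pr}=\operatorname{C}^*(p,r,0,-1)$, again an augmented PX graph of length $r$. In either case $\mathcal{M}/V$ is a $(G/V)$-rotary augmented PX map of length $r$, which completes the proof.
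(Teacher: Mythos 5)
Your proof is correct and follows essentially the same route as the paper's: verify $\rho,\tau\notin V$ so the quotient is defined, split on whether $\langle\rho\rangle\cap V$ is trivial or of order $p$, apply Lemma~\ref{lem:arc-regular-PX-map} in the first case, and recognize a dihedral quotient acting on a cycle in the second. Your explicit Sylow-subgroup comparison forcing $k=1$ in the degenerate case is a welcome elaboration of a step the paper leaves implicit.
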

	
	\begin{proof}
		Let $(\alpha,e,\beta)$ be an arc of $\mathcal{M}$.
		The valency of $\operatorname{C}(p,r,s)$ is $2p$ and since $G$ is arc-regular, it follows that $|G_\alpha|=2p$.
		As $p$ is odd, $G_\alpha\nleq V$ and $G_e\nleq V$.
		So the quotient $\mathcal{M}/V$ is well defined.	
		Let $|V|=p^t$ for some integer $t$ with $0 \le t < s+1$.
		Then $G/V\cong\mathbb{Z}_p^{s-t+1}\rtimes_\psi\operatorname{D}_{2r}$.
		Observe that $|G_\alpha\cap V|\in\{1, p\}$.
		
		Let $\bar G_\a=G_\a/G_\a\cap V$.
		If $|G_\alpha\cap V|=1$, then $|\bar{G}_\alpha|=2p$. 
		By Lemma~\ref{lem:arc-regular-PX-map}, the underlying graph of $\mathcal{M}/V$ is $\operatorname{C}^*(p,r,s-t,\delta)$. 
		If $|G_\alpha\cap V|=p$, then $|\bar{G}_\alpha|=2$ and $G/V$ is dihedral.
		Hence $G/V\cong\operatorname{D}_{2pr}$, and $\mathcal{M}/V$ has underlying graph $\operatorname{C}^*(p,r,0,-1)$.
	\end{proof}
	
	Now we prove the existence of the decomposition of rotary PX maps.

	\begin{theorem}\label{thm:existence-decomposition}
		Let $\mathcal{M}$ be a rotary augmented PX map of length $r$.
		Then $\mathcal{M}$ is isomorphic to a direct product of irreducible rotary augmented PX maps of length $r$.
		
		Conversely, every direct product of irreducible rotary augmented PX maps of length $r$
		is a rotary augmented PX map whose underlying graph is of length $r$.
	\end{theorem}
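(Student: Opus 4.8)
The plan is to prove the two directions separately, translating the semisimple decomposition of the Sylow $p$-subgroup as an $\operatorname{D}_{2r}$-module into a direct-product decomposition of the map, and conversely recovering the split affine structure $\mathbb{Z}_p^m\rtimes\operatorname{D}_{2r}$ from an arbitrary such direct product. For the forward direction, suppose $\mathcal{M}=\RM(G,\rho,\tau)$ is a rotary augmented PX map of length $r$. By Corollary~\ref{cor:aut-map} we may write $G=V\rtimes_\psi D$ with $V=\mathbb{Z}_p^{s+1}$ and $D=\operatorname{D}_{2r}$. If $s=0$ then $V=\mathbb{Z}_p$ is a simple module, so $\psi$ is irreducible and $\mathcal{M}$ is already irreducible, which is the trivial one-factor decomposition. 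If $s\geq 1$, then the underlying graph is $\operatorname{C}(p,r,s)$, and since $p\nmid 2r=|D|$ the algebra $\mathbb{F}_pD$ is semisimple; Maschke's theorem yields $V=\bigoplus_{i=1}^n V_i$ with each $V_i$ a simple $D$-module. Each $W_i:=\bigoplus_{j\neq i}V_j$ is $D$-invariant, hence normal in $G$, and $\bigcap_i W_i=1$, so Lemma~\ref{lem:direct-product-rotary-map} gives $\mathcal{M}\cong\prod_{i=1}^n \mathcal{M}/W_i$.

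It then remains to identify the factors. Each $W_i$ is a proper normal $p$-subgroup with $|W_i|=p^{\,s+1-d_i}<p^{\,s+1}$, writing $d_i=\dim V_i\geq 1$, so Lemma~\ref{lem:G-AT-QuoMap} shows $\mathcal{M}/W_i$ is a $(G/W_i)$-rotary augmented PX map of length $r$; this lemma already accounts for the degenerate case in which $\rho W_i$ collapses to an involution and the quotient graph becomes $\operatorname{C}^*(p,r,0,-1)$. Since $G/W_i\cong V_i\rtimes_{\psi_i}D$ with $\psi_i$ irreducible, Definition~\ref{def:irr-PXmap} shows each $\mathcal{M}/W_i$ is irreducible, completing this direction.

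For the converse, let $\mathcal{M}_i=\RM(G_i,\rho_i,\tau_i)$ for $1\le i\le n$ be irreducible rotary augmented PX maps of length $r$, so $G_i=V_i\rtimes_{\psi_i}\operatorname{D}_{2r}$ with $\psi_i$ irreducible. Put $\rho=(\rho_1,\dots,\rho_n)$, $\tau=(\tau_1,\dots,\tau_n)$ and $H=\langle\rho,\tau\rangle\le\prod_i G_i$, so that $\prod_i\mathcal{M}_i=\RM(H,\rho,\tau)$; since every $\tau_i$ is an involution, $(\rho,\tau)$ is a rotary pair and the map is $H$-rotary. The crux is to show $H\cong\mathbb{Z}_p^m\rtimes\operatorname{D}_{2r}$. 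Reducing each coordinate modulo $V_i$ gives a homomorphism $H\to\prod_i(G_i/V_i)=D^n$ whose kernel $P:=H\cap\prod_i V_i$ is an elementary abelian $p$-group normal in $H$. Writing $x_i,y_i\in D$ for the images of $\rho_i,\tau_i$, the image $\bar H=\langle(x_1,\dots,x_n),(y_1,\dots,y_n)\rangle$ is generated by two involutions, hence dihedral. As $(\rho_i,\tau_i)$ generates $G_i$, its image $\langle x_i,y_i\rangle$ is all of $\operatorname{D}_{2r}$, forcing $|x_iy_i|=r$ for every $i$; therefore $(x_1y_1,\dots,x_ny_n)$ has order $\operatorname{lcm}_i|x_iy_i|=r$ and $\bar H\cong\operatorname{D}_{2r}$. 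Thus $H=P.\operatorname{D}_{2r}$ with $p\nmid 2r$, and Schur--Zassenhaus splits this as $H\cong\mathbb{Z}_p^m\rtimes\operatorname{D}_{2r}$ with $m=\dim P$.

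Finally I would read off the underlying graph from $|H_\alpha|=|\rho|=\operatorname{lcm}_i|\rho_i|\in\{2,2p\}$. If this lcm equals $2p$, then Lemma~\ref{lem:arc-regular-PX-map} applies and gives underlying graph $\operatorname{C}^*(p,r,m-1)$, of length $r$. If instead $|\rho|=2$, then every $\rho_i$ is an involution, so $\rho$ and $\tau$ are both involutions, $H$ is dihedral with $|\rho\tau|=\operatorname{lcm}_i|\rho_i\tau_i|=pr$, whence $H\cong\operatorname{D}_{2pr}$ and, by Corollary~\ref{coro:trivial-PX-map}, the graph is $\operatorname{C}^*(p,r,0,-1)$, again of length $r$. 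I expect the main obstacle to be the converse's verification that $\bar H$ has order exactly $2r$ --- that is, that ``length $r$'' is preserved under the direct product --- since this is precisely what pins $H$ down to the form $\mathbb{Z}_p^m\rtimes\operatorname{D}_{2r}$ required before the characterization Lemma~\ref{lem:arc-regular-PX-map} can be invoked; by contrast the forward direction is largely bookkeeping on top of Maschke's theorem together with Lemmas~\ref{lem:direct-product-rotary-map} and~\ref{lem:G-AT-QuoMap}.
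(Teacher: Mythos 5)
Your proposal is correct and follows essentially the same route as the paper: Maschke's theorem on the Sylow $p$-subgroup combined with Lemmas~\ref{lem:direct-product-rotary-map} and~\ref{lem:G-AT-QuoMap} for the decomposition, and the projection onto $\operatorname{D}_{2r}^n$ together with the order-$r$ computation for $\pi(\rho)\pi(\tau)$, Schur--Zassenhaus, and Lemma~\ref{lem:arc-regular-PX-map} for the converse. The only cosmetic difference is that in the $|\rho|=2$ case you identify $H\cong\operatorname{D}_{2pr}$ directly, whereas the paper observes that all factors coincide with the unique map on $\operatorname{C}^*(p,r,0,-1)$; both are valid.
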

	\begin{proof}
		If the underlying graph of $\mathcal{M}$ is not a PX graph, then
		by Theorem~\ref{thm:arc-regular-PX-graph}, we have that $\mathcal{M}$ is an irreducible rotary augmented PX map. 
		This completes the case.
		
		Now suppose that $\mathcal{M}$ is a rotary PX map.
		Let $G$ be the orientation preserving automorphism group of $\calM$.
		Then $G$ is arc-regular and $\calM$ is $G$-rotary.
		By Theorem~\ref{thm:arc-regular-PX-graph}, $G=\mathbb{Z}_p^{s+1}\rtimes_{\psi}\operatorname{D}_{2r}$.
		Identify $\mathbb{Z}_p^{s+1}$ with $\mathbb{F}_p^{s+1}$.
		Note that the odd prime $p$ is coprime to $r$.
		By Maschke's theorem, there are minimal normal subgroups $V_1,\ldots,V_n\le\mathbb{Z}_p^{s+1}$ of $G$ such that $\mathbb{Z}_p^{s+1}=V_1\times\cdots\times V_n$.
		For every $i\in\{1,\ldots,n\}$, let $U_i=\prod_{j\ne i}V_j$. 
		Then $U_i$ is a normal subgroup of $G$ and $\bigcap_i U_i=1$.
		Then by Lemma~\ref{lem:G-AT-QuoMap}, $\mathcal{M}/U_i$ is a $G/U_i$-rotary augmented PX map of length $r$.
		Since $G/U_i\cong V_i\rtimes_{\psi|_{V_i}}\operatorname{D}_{2r}$,
		we have that $\mathcal{M}/U_i$ is irreducible.
		By Lemma~\ref{lem:direct-product-rotary-map}, $\mathcal{M}\cong\prod_i\mathcal{M}/U_i$.

		Now we study the other direction.
		For every $i\in\{1,\ldots,n\}$, let \[\mathcal{M}_i=\RM(G_i,\rho_i,\tau_i),\]
		where $G_i=\mathbb{Z}_p^{s_i}\rtimes\mathrm{D}_{2r}$.
		Let 
		$\rho=(\rho_1,\ldots,\rho_n),\ \tau=(\tau_1,\ldots,\tau_n)\in \prod_{i\in\{1,\ldots,n\}} G_i$, let $H=\langle \rho , \tau \rangle$ and let $\mathcal{M}=\prod_{i\in\{1,\ldots,n\}}\mathcal{M}_i$.
		Then $\mathcal{M}=\RM(H,\rho , \tau)$.
		By our definition, either $|\rho|=2$ or $|\rho|=2p$.
		If $|\rho|=2$, it is clear that $\mathcal{M}\cong\mathcal{M}_1\cong\cdots\cong\mathcal{M}_n$ whose underlying graph is $\operatorname{C}_{pr}$.
		
		Now suppose that $|\rho|=2p$.
		We claim that $H\cong \mathbb{Z}_p^d\rtimes\mathrm{D}_{2r}$ for some $1\le d\le s_1+\cdots+s_n$.
		For every $i\in\{1,\ldots,n\}$, let $\pi_i$ be the natural projection from $G_i$ to $\operatorname{D}_{2r}$, that is,  $\pi_i((x,y))=y$ for every $(x,y)\in G_i$.
		Let $\pi:\prod_{i\in\{1,\ldots,n\}}G_i\rightarrow \operatorname{D}_{2r}^n$ be a homomorphism given by $\pi(g_1,\ldots,g_n)=(\pi_1(g_1),\ldots,\pi_n(g_n))$.
		Let $K=\ker(\pi)$. Then 
		\[K= \prod_{i\in\{1,\ldots,n\}}\mathbb{Z}_p^{s_i}\trianglelefteq \prod_{i\in\{1,\ldots,n\}}G_i\]
		is the unique Sylow $p$-subgroup of $\prod_{i\in\{1,\ldots,n\}}G_i$.
		Since $\rho$ is of order $2p$ and $\tau$ is of order $2$, we have that both $\pi(\rho)$ and $\pi(\tau)$ are of order $2$.
		Therefore, $\pi(H)=\langle\pi(\rho),\pi(\tau)\rangle$ is dihedral.
		Since for every $i\in\{1,\ldots,n\}$, $G_i=\langle \rho_i,\tau_i\rangle$,
		we have that $\langle\pi_i(\rho_i),\pi_i(\tau_i)\rangle=\operatorname{D}_{2r}$, which implies that $\pi_i(\rho_i)\pi_i(\tau_i)$ is of order $r$ for every $i\in\{1,\ldots,n\}$.
		Hence $\pi(\rho)\pi(\tau)=(\pi_1(\rho_1)\pi_1(\tau_1),\ldots,\pi_n(\rho_n)\pi_n(\tau_n))$ is of order $r$. 
		This implies that $\pi(H)\cong\operatorname{D}_{2r}$.
		By the homomorphism theorem, $\pi(H)\cong H/(H\cap K)$.
		Write $H\cap K\cong \mathbb{Z}_p^d$. 
		Then by the Schur-Zassenhaus theorem, $H\cong\mathbb{Z}_p^d\rtimes\operatorname{D}_{2r}$.
		
		Let $\Gamma$ be the underlying graph of $\mathcal{M}$. 
		Since $\langle\rho\rangle$ is the vertex stabilizer and $|\rho|=2p$, it follows that $\mathbb{Z}_p^{d}\cap \langle\rho\rangle=\mathbb{Z}_p$. 
		Therefore, $\mathbb{Z}_p^{d}$ is not semi-regular on the vertex set of $\Gamma$.
		By Lemma~\ref{lem:arc-regular-PX-map}, $\Gamma=\operatorname{C}^{*}(p,r,d-1)$, which completes the proof.
	\end{proof}
	
	Now we look into the uniqueness of the decomposition. Before that, we need to establish Lemma~\ref{lem:quotient-map} and Lemma~\ref{lem:unqiue-minimal-normal-subgroup}.
	
	\begin{lemma}\label{lem:quotient-map}
		Let $\mathcal{M}$ be a $G$-rotary map, and let $V_1,V_2$ be normal subgroups of $G$. 
		Then $\mathcal{M}/V_1\cong\mathcal{M}/V_2$ if and only if $V_1=V_2$.
	\end{lemma}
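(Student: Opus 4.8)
The plan is to prove the nontrivial (``only if'') direction by converting an isomorphism of quotient maps into a pair of opposite group homomorphisms and then squeezing the two kernels against each other; the ``if'' direction is immediate, since $V_1=V_2$ yields literally the same quotient map. Write $\mathcal{M}=\RM(G,\rho,\tau)$ and let $q_i\colon G\to G/V_i$ be the natural projection for $i\in\{1,2\}$, so that $q_i(\rho)=\rho V_i$ and $q_i(\tau)=\tau V_i$. Implicit throughout is that $\rho,\tau\notin V_1\cup V_2$, which is exactly what Definition~\ref{def:quot-RotaMap} requires for the quotient maps $\mathcal{M}/V_1$ and $\mathcal{M}/V_2$ to be defined. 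Note that nothing about the Praeger--Xu structure of $G$ will be used: the statement is purely one about rotary pairs.

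Assuming $\mathcal{M}/V_1\cong\mathcal{M}/V_2$, I would first observe that $\mathcal{M}/V_2$ is then isomorphic to a quotient map of $\mathcal{M}/V_1$, namely the quotient of $\mathcal{M}/V_1$ by the trivial normal subgroup, which returns $\mathcal{M}/V_1$ itself. Proposition~\ref{lem:quotient-maps}, applied to the rotary pairs $(\rho V_1,\tau V_1)$ and $(\rho V_2,\tau V_2)$, then supplies a homomorphism $\phi_1\colon G/V_1\to G/V_2$ with $\phi_1(\rho V_1)=\rho V_2$ and $\phi_1(\tau V_1)=\tau V_2$. Interchanging the roles of $V_1$ and $V_2$ yields, by the same reasoning, a homomorphism $\phi_2\colon G/V_2\to G/V_1$ matching the pairs in the opposite direction.

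The decisive step is the identity $\phi_1\circ q_1=q_2$ of homomorphisms $G\to G/V_2$. Indeed, both send $\rho\mapsto\rho V_2$ and $\tau\mapsto\tau V_2$, and since $G=\langle\rho,\tau\rangle$ a homomorphism out of $G$ is determined by its values on $\rho$ and $\tau$. Consequently $V_1=\ker q_1\subseteq\ker(\phi_1\circ q_1)=\ker q_2=V_2$. The symmetric computation $\phi_2\circ q_2=q_1$ gives $V_2\subseteq V_1$, whence $V_1=V_2$.

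The step I expect to require the most care is the passage from the geometric hypothesis $\mathcal{M}/V_1\cong\mathcal{M}/V_2$ to the existence of the homomorphisms $\phi_1,\phi_2$. Proposition~\ref{lem:quotient-maps} is phrased in terms of one map being a quotient of another, so I must state clearly that an isomorphism of rotary maps is the special case of a quotient by the trivial normal subgroup, which is legitimate precisely because $\rho,\tau\notin V_i$ forces $\rho V_i,\tau V_i\neq 1$. Once the two homomorphisms are in hand, the generation relation $G=\langle\rho,\tau\rangle$ does all the remaining work, with no further computation needed.
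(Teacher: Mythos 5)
Your proof is correct and follows essentially the same route as the paper: both arguments convert the map isomorphism into a group homomorphism between the quotients matching the rotary pairs, use $G=\langle\rho,\tau\rangle$ to identify the composite with the natural projection, and then compare kernels. The only cosmetic difference is that the paper extracts a single group isomorphism $\sigma:G/V_1\to G/V_2$ and concludes $V_1=V_2$ in one step from $\ker(\sigma\circ\pi_{V_1})=\ker(\pi_{V_2})$, whereas you invoke Proposition~\ref{lem:quotient-maps} twice to get two opposite homomorphisms and obtain the two inclusions separately.
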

	\begin{proof}
		If $V_1=V_2$, then it is clear that $\calM/V_1\cong \calM/V_2$.
		
		Write $\mathcal{M}=\RM(G,\rho,\tau)$.
		If $\mathcal{M}/V_1\cong\mathcal{M}/V_2$, then there is an isomorphism $\sigma: G/V_1 \to G/V_2$ such that $\sigma(\rho V_1)=\rho V_2$ and $\sigma(\tau V_1)=\tau V_2$.
		Let $\pi_{V_1}:G\rightarrow G/V_1$ and $\pi_{V_2}:G\rightarrow G/V_2$ be natural projections from $G$ onto $G/V_1$ and $G/V_2$, respectively.
		Then $\pi_{V_2}(\rho)=\sigma\circ\pi_{V_1}(\rho)$ and $\pi_{V_2}(\tau)=\sigma\circ\pi_{V_1}(\tau)$.
		Since $G=\langle \rho , \tau \rangle$, we have that $\pi_{V_2}=\sigma\circ\pi_{V_1}$.
		Since $\ker(\pi_{V_2})=V_2$ and $\ker(\sigma\circ\pi_{V_1})=V_1$, we conclude $V_1=V_2$.
		This completes the proof.
	\end{proof}

	\begin{lemma}\label{lem:unqiue-minimal-normal-subgroup}
		Let $V_1=\mathbb{Z}_p^{d_1}$ and let $V_2=\mathbb{Z}_p^{d_2}$.
		Let $G=(V_1\times V_2)\rtimes_{(\psi_1,\psi_2)} \operatorname{D}_{2r}$
		where $\psi_i:\operatorname{D}_{2r}\rightarrow \operatorname{GL}(V_i)$ is irreducible for every $i\in\{1,2\}$.
		If $G$ has a rotary pair $(\rho,\tau)$ such that $|\rho|=2p$ and $|\tau|=2$,
		then $\psi_1\not\cong\psi_2$.
	\end{lemma}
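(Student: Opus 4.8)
The plan is to argue by contradiction: assume $\psi_1\cong\psi_2$ yet $G$ admits a rotary pair $(\rho,\tau)$ with $|\rho|=2p$, and contradict $\langle\rho,\tau\rangle=G$. Write $V=V_1\times V_2$ and $\psi=\psi_1\oplus\psi_2$, and decompose $\rho=vx$ with $v\in V$ and $x\in\D_{2r}$. Since $p$ is odd and $p\nmid r$, the image of $\rho$ in $G/V\cong\D_{2r}$ has order dividing $\gcd(2p,2r)=2$; it cannot be trivial (else $\rho\in V$ and $|\rho|\mid p$), so $x$ is an involution, and because $\langle x,\tau V\rangle=\D_{2r}$ it must be non-central (a central involution generates only an abelian group with one other element). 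Then $\rho^2=(1+\psi(x))v$, and $|\rho|=2p$ forces $\rho^2\neq 0$, so $v\notin E_{-1}(x)$. Replacing $(\rho,\tau)$ by a $V$-conjugate (an inner automorphism, which preserves the isomorphism type of the subgroup generated, cf. Proposition~\ref{prop:iso-RotaMap}) I can subtract off the $E_{-1}(x)$-component of $v$ and assume $0\neq v\in\C_V(x)=E_1(x)$. Now $v$ and $x$ commute, $\langle\rho\rangle\cong\ZZ_{2p}$, and $v=\rho^{p+1}\in\langle\rho\rangle\le G$.

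Second, I would study the $\D_{2r}$-submodule $N=\langle v\rangle^{\D_{2r}}$; as a submodule of $V$ it is normal in $G=V\rtimes\D_{2r}$. The crux is that $\psi_1\cong\psi_2$ makes $N$ \emph{proper}. Writing $v=(v^{(1)},v^{(2)})$ with $v^{(i)}\in E_1(x)\subseteq V_i$, if some $v^{(i)}=0$ then $N$ lies in the other factor and is proper. If both are nonzero, fix a $\D_{2r}$-isomorphism $V_1\to V_2$ and apply Lemma~\ref{lem:module-homomorphism}(ii) with the non-central involution $x$ in the role of $b$ to obtain $f\in\operatorname{Hom}_{\D_{2r}}(V_1,V_2)$ with $f(v^{(1)})=v^{(2)}$; the $\D_{2r}$-linearity of $f$ then gives $N=\{(m,f(m)):m\in V_1\}$, the graph of $f$, of dimension $d=d_1=d_2$. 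In all cases $\dim_{\FF_p}N=d<2d=\dim_{\FF_p}V$, so $N\subsetneq V$. The degree-one situation falls under the same umbrella: there $\psi$ acts by scalars $\pm1$, so $N=\FF_p v$ is a line, and the condition $v^{(i)}\neq0\in E_1(x)$ forces $\psi_i(x)=1$.

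Finally, I would pass to $\bar G=G/N=(V/N)\rtimes_{\bar\psi}\D_{2r}$, where $V/N\cong\ZZ_p^{d}$ is irreducible and $\bar\psi\cong\psi_1$. Since $v\in N$, the image $\bar\rho=\bar x$ is an involution, while $\bar\tau$ has order at most $2$; as $\langle\rho,\tau\rangle=G$ yields $\langle\bar\rho,\bar\tau\rangle=\bar G$, the group $\bar G$ would be generated by two involutions, hence dihedral. This is the contradiction: $\ZZ_p^{d}\rtimes_{\bar\psi}\D_{2r}$ with $\bar\psi$ irreducible cannot be dihedral in our setting. Indeed, in any dihedral group every odd-order subgroup is cyclic and lies in the rotation subgroup, so the normal subgroup $V/N\cong\ZZ_p^{d}$ forces $d=1$, already excluding $d\ge 2$. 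For $d=1$ the eigenvector condition $0\neq v\in E_1(x)$ gives $\bar\psi(x)=\psi_1(x)=1$, so $x$ centralizes $V/N\cong\ZZ_p$; an involution centralizing a nontrivial odd-order subgroup of the rotation subgroup of a dihedral group must be its central involution, making $x$ central and $\bar G=\langle\bar x,\bar\tau\rangle$ abelian, impossible since $|\bar G|=2pr>4$. In particular this excludes $\bar\psi=\gamma_{-1,-1}$, the one degree-one representation yielding a genuinely dihedral $\bar G\cong\D_{2pr}$, since $\gamma_{-1,-1}(x)=-1\neq1$.

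I expect the main obstacle to be precisely this last step. In the abstract, the degree-one representations do produce dihedral quotients, and what rescues the argument is the sharp constraint $\psi_1(x)=1$ arising from $v$ being a nonzero $x$-fixed vector; the reduction to $v\in\C_V(x)$ in the first paragraph is therefore load-bearing and I would verify it, together with the degree-one bookkeeping, most carefully.
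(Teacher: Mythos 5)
Your proof is correct, and its engine is the same as the paper's: when $\psi_1\cong\psi_2$, the $\operatorname{D}_{2r}$-submodule generated by the vector part of the rotation is trapped inside the graph of a module isomorphism $V_1\to V_2$ (built exactly as in the paper, by composing a fixed isomorphism with an endomorphism supplied by Lemma~\ref{lem:module-homomorphism}(ii)), hence is a proper normal subgroup, contradicting generation. Where you diverge is in the surrounding bookkeeping. The paper normalizes $\tau$ into $\operatorname{D}_{2r}$ via Schur--Zassenhaus, works with $v'=\rho^2$ and $\tau'=\rho^p$, splits on whether $\langle\tau',\tau\rangle\cong\operatorname{D}_{2r}$ or $\operatorname{D}_{2pr}$ (disposing of the degree-one representations in the latter branch), and contradicts $\langle (v')^{D}\rangle=V_1\times V_2$ directly. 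You instead normalize $\rho$ by a $V$-conjugation so that its vector part lies in $E_1(x)$ --- legitimate, since $\psi(x)-1$ maps $V$ onto $E_{-1}(x)$ for $p$ odd and conjugation preserves rotary pairs --- and then finish by passing to $G/N$, observing it would be generated by two involutions, hence dihedral, which is incompatible with an irreducible normal $\mathbb{Z}_p^d$ for $d\ge 2$ and, for $d=1$, with the identity $\psi_1(x)=1$ forced by $0\ne v\in E_1(x)$. This quotient endgame is arguably cleaner and absorbs the degree-one cases uniformly; the one point worth stating explicitly is that Lemma~\ref{lem:module-homomorphism}(ii) is invoked with an arbitrary non-central involution $x$ in place of $b$, which is harmless (its proof only uses $\dim E_1=d/2$ from Lemma~\ref{lem:irreducible-representation-dihedral}(iii)) and is in any case exactly how the paper itself applies that lemma to $\tau'$.
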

	\begin{proof}
		Assume that $\psi_1\cong\psi_2$. We prove that this leads to a contradiction.
		Note that $p\nmid r$.
		By the Schur-Zassenhaus theorem, all subgroups of order $2r$ are conjugate in $G$.
		Without loss of generality, we can assume that $\tau\in \operatorname{D}_{2r}$.
		Write $\rho=v\tau_1$, where $v\in V_1\times V_2$ and $\tau_1\in\operatorname{D}_{2r}$.
		Then $\operatorname{D}_{2r}=\langle\tau,\tau_1\rangle$.
		Let $v'=\rho^2$ and let $\tau'=\rho^p$.
		Then $\langle \rho\rangle=\langle v'\rangle\times\langle \tau'\rangle$.
		Let $D=\langle \tau',\tau\rangle$.
		Then $D$ is a dihedral group. 
		Since $G=\langle v',\tau',\tau\rangle$,
		we have that $G=(V_1\times V_2)D$.
		Therefore, either $D\cong\operatorname{D}_{2r}\mbox{ or }\operatorname{D}_{2pr}$.
		
		In the latter case, $D\cap (V_1\times V_2)$ is of order $p$ and normal in $D$.
		Treat $V_1\times V_2$ as a $D_{2r}$-module.
		Then $D\cap (V_1\times V_2)$ is a simple $D_{2r}$-submodule.
		By Maschke's theorem, $V_1\cong V_2\cong\mathbb{Z}_p$.
		By Lemma~\ref{lem:irreducible-representation-dihedral} {\rm (i)} and {\rm (ii)},
		$\psi_1\in\{\gamma_{1,1},\gamma_{-1,-1},\gamma_{-1,1},\gamma_{1,-1}\}$ (if possible).
		If $\psi_1$ is trivial, then $G\cong\mathbb{Z}_p^2\times\operatorname{D}_{2r}$.
		This leads to a contradiction: $\mathbb{Z}_p^2$ is isomorphic to the cyclic group $G/\operatorname{D}_{2r} = \langle \rho \operatorname{D}_{2r}\rangle$.
		If $\psi_1=\gamma_{-1,-1}$, then $G$ does not contain any element of order $2p$, which is also a contradiction.
		If $\psi_1\in\{\gamma_{1,-1},\gamma_{-1,1}\}$, then $v'=vv^{\tau_1}\ne 1$.
		Therefore, $v^{\tau_1}\ne v^{-1}$, which
		implies that $\tau_1$ centralizes $V_1\times V_2$.
		Hence $\tau'=\tau_1$ and $D=\operatorname{D}_{2r}\not\cong\operatorname{D}_{2pr}$.
		
		We therefore deduce that $D\cong\operatorname{D}_{2r}$ and $G=(V_1\times V_2)\rtimes_{(\eta_1,\eta_2)} D$.
		Since $D$ is conjugate to $\operatorname{D}_{2r}$, then there is an isomorphism $\sigma:D\rightarrow \operatorname{D}_{2r}$
		such that $\eta_1=\psi_1\circ \sigma$ and $\eta_2=\psi_2\circ\sigma$.
		Then $\eta_1\cong\eta_2$, and denote by $f$ the $D$-module isomorphism from $V_1$ to $V_2$.
		Since $v'\in V_1\times V_2$,
		we have that $\langle (v')^D\rangle\le V_1\times V_2$.
		Also since $G=\langle v',D\rangle$, we have that $G=\langle (v')^D\rangle D$.
		Therefore, $\langle (v')^D\rangle=V_1\times V_2$.
		Let $v'=(x,y)$ where $x\in V_1$ and $y\in V_2$. 
		Note that $v'\in\operatorname{C}_{V_1\times V_2}(\tau')$.
		We have that $x\in\operatorname{C}_{V_1}(\tau')$ and $y\in\operatorname{C}_{V_2}(\tau')$.
		Then there exists $y'\in V_1$ such that $f^{-1}(y)=y'$.
		By Lemma~\ref{lem:module-homomorphism} {\rm (ii)}, there exists $\zeta'\in \mathrm{End}_D(V_1)$ such that 
		\[\zeta'(x)=y'=f^{-1}(y).\]
		Define $\zeta=f\circ \zeta'$.
		
		Then  $\zeta:V_1\rightarrow V_2$ is a $D$-module isomorphism such that  $\zeta(x)=y$.
		
		Therefore, $v'\in\{(w,\zeta(w))\mid w\in V_1\}$.
		Note that $\{(w,\zeta(w))\mid w\in V_1\}\cong \ZZ_p^{d_1}$ is normal in $G$. 
		We have that $\langle (v')^D\rangle\le\{(w,\zeta(w))\mid w\in V_1\}$.
		However, this is impossible because $\{(w,\zeta(w))\mid w\in V_1\}\ne V_1\times V_2$.
		Therefore, we have that $\psi_1\not\cong\psi_2$.
	\end{proof}

	\begin{corollary}\label{cor:unique-minimal-normal-subgroup}
		For every $i\in\{1,\ldots,n\}$, let $V_i=\mathbb{Z}_p^{d_i}$.
		Let $G=(V_1\times \cdots\times V_n)\rtimes_{(\psi_1,\ldots,\psi_n)} \operatorname{D}_{2r}$
		where $\psi_i:\operatorname{D}_{2r}\rightarrow \operatorname{GL}(V_i)$ is irreducible for every $i\in\{1,\ldots,n\}$.
		If $G$ has a rotary pair $(\rho,\tau)$ such that $|\rho|=2p$ and $|\tau|=2$,
		then $\psi_i\not\cong\psi_j$ for every $1\le i<j\le n$.
	\end{corollary}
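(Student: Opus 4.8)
The plan is to deduce the corollary from the two–factor case, Lemma~\ref{lem:unqiue-minimal-normal-subgroup}, by projecting $G$ onto the subproduct carried by the two factors $V_i,V_j$ in question. Concretely, fix a pair $i<j$ and set $W=\prod_{k\neq i,j}V_k$. Since each $V_k$ is a $\D_{2r}$-submodule of the abelian normal subgroup $V=V_1\times\cdots\times V_n$, it is normal in $G=V\rtimes\D_{2r}$, and hence so is $W$. Passing to the quotient gives $\bar G:=G/W\cong (V_i\times V_j)\rtimes_{(\psi_i,\psi_j)}\D_{2r}$, and the images $\bar\rho=\rho W$, $\bar\tau=\tau W$ generate $\bar G$. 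If I can show that $(\bar\rho,\bar\tau)$ is again a rotary pair with $|\bar\rho|=2p$ and $|\bar\tau|=2$, then Lemma~\ref{lem:unqiue-minimal-normal-subgroup} applied to $\bar G$ forces $\psi_i\not\cong\psi_j$, which is exactly the desired conclusion.

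Checking $|\bar\tau|=2$ is immediate: writing $\tau=wz$ with $w\in V$ and $z\in\D_{2r}$, the condition $|\tau|=2$ together with $p$ odd forces $z\neq 1$, so $\tau\notin V\supseteq W$ and $\bar\tau\neq1$. For $\bar\rho$, let $\pi:G\to\D_{2r}$ be the natural projection. Since $p\nmid r$, the image $\pi(\rho)$ has order dividing $\gcd(2p,2r)=2$, and it is nontrivial (otherwise $\rho\in V$, contradicting $|\rho|=2p$); thus $\pi(\rho)$ is an involution. Consequently $\pi(\rho^p)=\pi(\rho)^p=\pi(\rho)\neq1$, so $\rho^p\notin W$ and $\bar\rho^p\neq1$ in $\bar G$. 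As $|\bar\rho|$ divides $|\rho|=2p$ but does not divide $p$, I conclude $|\bar\rho|\in\{2,2p\}$.

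The main obstacle is to rule out the degenerate possibility $|\bar\rho|=2$, i.e.\ that $\rho^2$ has trivial image in $V_i\times V_j$; this can genuinely occur in $G$ (for instance when $\psi_i\cong\psi_j\cong\gamma_{-1,-1}$, where every reflection inverts both factors and the $V_i,V_j$-components of $\rho^2$ cancel), so it must be excluded rather than ignored. Here the plan is a Sylow argument: if $|\bar\rho|=2$, then $\bar\rho$ and $\bar\tau$ are two involutions generating $\bar G$, so $\bar G$ is dihedral; but a dihedral group has cyclic Sylow $p$-subgroups for the odd prime $p$, whereas the Sylow $p$-subgroup of $\bar G$ is $V_i\times V_j\cong\ZZ_p^{d_i+d_j}$ with $d_i+d_j\geq2$, which is not cyclic. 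This contradiction secures $|\bar\rho|=2p$, and then Lemma~\ref{lem:unqiue-minimal-normal-subgroup} finishes the argument. Since the pair $i<j$ was arbitrary, this yields $\psi_i\not\cong\psi_j$ for all $1\le i<j\le n$, so the only nonroutine steps are the order computation for $\bar\rho$ and the dihedral/Sylow dichotomy.
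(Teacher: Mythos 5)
Your proposal is correct and follows essentially the same route as the paper: quotient by $\prod_{k\ne i,j}V_k$, rule out $|\bar\rho|=2$ via the observation that two generating involutions would make the quotient dihedral with a non-cyclic Sylow $p$-subgroup $\ZZ_p^{d_i+d_j}$, and then invoke Lemma~\ref{lem:unqiue-minimal-normal-subgroup}. Your write-up is somewhat more careful than the paper's (explicitly verifying $|\bar\tau|=2$ and that $|\bar\rho|\in\{2,2p\}$), but the argument is the same.
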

	\begin{proof}
		Let $V_{i,j}=\prod_{k\ne i,k\ne j}V_k$.
		Then $G/V_{ij}\cong (V_i\times V_j)\rtimes_{(\psi_i,\psi_j)}\operatorname{D}_{2r}$.
		Since $G$ is generated by $\rho$ and $\tau$, we have that $G/V_{i,j}$
		is generated by $\rho V_{ij}$ and $\tau V_{ij}$.
		If $|\rho V_{ij}|=2$, then $G/V_{ij}$ is a dihedral group, which implies that the Sylow $p$-subgroup $V_1\times V_2$ of $G/V_{ij}$ is cyclic. 
		However, this is impossible.
		Therefore, $|\rho V_{ij}|=2p$. By Lemma~\ref{lem:unqiue-minimal-normal-subgroup}, we have that $\psi_i\not\cong\psi_j$.
		Since $i,j$ are arbitrary, we have that $\psi_i\not\cong\psi_j$ for every $1\le i<j\le n$.
	\end{proof}
	
	\begin{corollary}\label{coro:auto-ProducMap}
		Let $\mathcal{M}_1,\ldots,\mathcal{M}_m$ be irreducible $\mathbb{Z}_p^{d_i}\rtimes\operatorname{D}_{2r}$-rotary augmented PX maps. If $\mathcal{M}_{i}\not\cong\mathcal{M}_j$ for $i\ne j$,
		then the orientation preserving automorphism group $G$ of $\mathcal{M}_1\times\cdots\times\mathcal{M}_m$ is $ (\mathbb{Z}_p^{d_1}\times\cdots\times\mathbb{Z}_p^{d_m})\rtimes \operatorname{D}_{2r}$.
	\end{corollary}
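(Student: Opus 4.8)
The plan is to identify the orientation‑preserving automorphism group of $\calM:=\calM_1\times\cdots\times\calM_m$ with the subgroup $H=\l\rho,\tau\r\le\prod_{i=1}^mG_i$, where $\rho=(\rho_1,\dots,\rho_m)$, $\tau=(\tau_1,\dots,\tau_m)$ and $G_i=\ZZ_p^{d_i}\rtimes_{\psi_i}\D_{2r}$; this is immediate from Definition~\ref{def:quot-RotaMap} together with the fact that $H$ already acts regularly on the arcs of $\calM$. Writing $K=\prod_iV_i$ (with $V_i=\ZZ_p^{d_i}$) for the unique Sylow $p$‑subgroup of $\prod_iG_i$, the argument in the proof of Theorem~\ref{thm:existence-decomposition} shows $H\cong W\rtimes\D_{2r}$ with $W=H\cap K$, so the underlying group is $\ZZ_p^{\dim W}\rtimes\D_{2r}$ and the whole statement reduces to proving $W=K$, i.e. $\dim W=\sum_id_i$.

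I would first record two structural facts about $W$ as an $\FF_p\D_{2r}$‑module. On one hand, each coordinate projection $p_i\colon\prod_jG_j\to G_i$ restricts to a surjection $H\to G_i$, because $p_i(\rho)=\rho_i$ and $p_i(\tau)=\tau_i$ generate $G_i$; mapping Sylow $p$‑subgroups onto Sylow $p$‑subgroups gives $p_i(W)=V_i$, so $W$ is a \emph{subdirect} submodule of $K=\bigoplus_iV_i$. On the other hand, $H$ carries the rotary pair $(\rho,\tau)$ with $|\rho|=2p$ (at least one $\rho_i$ has order $2p$ once $m\ge2$, since otherwise all $\calM_i$ would be the unique map on $\C^*(p,r,0,-1)$ and hence mutually isomorphic), so decomposing $W=\bigoplus_kU_k$ into irreducibles by Maschke's theorem and applying Corollary~\ref{cor:unique-minimal-normal-subgroup} to $H=(\bigoplus_kU_k)\rtimes\D_{2r}$ shows that the $U_k$ are pairwise non‑isomorphic; that is, $W$ is multiplicity‑free.

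The heart of the proof is to upgrade this to: the $V_i$ are pairwise non‑isomorphic as $\D_{2r}$‑modules. Fixing $i\ne j$ and projecting to $G_i\times G_j$, the image $H_{ij}=\l(\rho_i,\rho_j),(\tau_i,\tau_j)\r$ is a subdirect product and equals the automorphism group of $\calM_i\times\calM_j$. Here a Goursat‑type analysis applies: the normal subgroup $N_i=\{g\in G_i:(g,1)\in H_{ij}\}$ has trivial image in $G_i/V_i\cong\D_{2r}$, hence lies in $V_i$, so $N_i\in\{1,V_i\}$ by irreducibility of $\psi_i$, and likewise for $N_j$; a comparison of orders forces $N_i=V_i\iff N_j=V_j$. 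If $N_i=N_j=1$ then $H_{ij}$ is the graph of an isomorphism $\phi\colon G_i\to G_j$ with $\phi(\rho_i)=\rho_j$ and $\phi(\tau_i)=\tau_j$, whence $\calM_i\cong\calM_j$ by Proposition~\ref{prop:iso-RotaMap}, contrary to hypothesis. Therefore $H_{ij}=(V_i\times V_j)\rtimes\D_{2r}$, and since $p_{\{i,j\}}(W)$ is the Sylow $p$‑subgroup of $H_{ij}$, the projection $W\to V_i\oplus V_j$ is onto. Now if $V_i\cong V_j$, then $V_i\oplus V_j$ is isotypic of length $2$, while the multiplicity‑free module $W$ contributes only a single copy of that type; using that $\operatorname{End}_{\D_{2r}}(V_i)$ is a field (Schur's lemma, cf. Lemma~\ref{lem:module-homomorphism}), the image of $W$ in $V_i\oplus V_j$ is a homomorphic image of that single copy and hence a proper submodule — contradicting surjectivity. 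Thus $V_i\not\cong V_j$.

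With all $V_i$ pairwise non‑isomorphic, $K=\bigoplus_iV_i$ is multiplicity‑free, and a subdirect submodule of a multiplicity‑free semisimple module surjecting onto every factor must be the whole module; counting dimensions, $\dim W\ge\sum_i d_i=\dim K$ forces $W=K$. Consequently $H=K\rtimes\D_{2r}=(\ZZ_p^{d_1}\times\cdots\times\ZZ_p^{d_m})\rtimes\D_{2r}$, as claimed. I expect the main obstacle to be the bookkeeping in the third paragraph: one must carefully justify the Goursat dichotomy in the semidirect‑product setting (that $N_i\le V_i$ and the order comparison), and then correctly marry the ``multiplicity‑free'' input from Corollary~\ref{cor:unique-minimal-normal-subgroup} with the ``pairwise full'' input from the map‑isomorphism hypothesis, since a quick check shows that neither ingredient alone rules out a diagonal subdirect product.
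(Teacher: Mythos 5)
Your proposal is correct, but it diverges from the paper's proof at the decisive step. Both arguments share the same skeleton: identify the orientation-preserving group with $H=\langle\rho,\tau\rangle$, use the proof of Theorem~\ref{thm:existence-decomposition} to write $H\cong W\rtimes\D_{2r}$ with $W=H\cap\prod_iV_i$, and invoke Corollary~\ref{cor:unique-minimal-normal-subgroup} to make $W$ multiplicity-free. They differ in how the hypothesis $\calM_i\not\cong\calM_j$ is converted into $W=\prod_iV_i$. The paper works entirely inside $H$: it decomposes $W$ into minimal normal subgroups $V_1,\ldots,V_n$ of $H$, observes that each $\ker(p_i)$ is the product of all but one of them (because the Sylow $p$-subgroup of $G_i$ is an irreducible module), and applies Lemma~\ref{lem:quotient-map} to conclude that the $m$ kernels are pairwise distinct, whence $m\le n$, while $\bigcap_i\ker(p_i)=1$ gives $n\le m$. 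You instead work in the pairwise projections $H_{ij}\le G_i\times G_j$: a Goursat analysis combined with Proposition~\ref{prop:iso-RotaMap} forces $H_{ij}=(V_i\times V_j)\rtimes\D_{2r}$, the multiplicity-freeness of $W$ then rules out $V_i\cong V_j$, and $W$ becomes a subdirect submodule of a multiplicity-free semisimple module, hence all of it. Your route is longer --- the dichotomy $N_i\in\{1,V_i\}$ needs the fact that the image of $H_{ij}$ in $\D_{2r}\times\D_{2r}$ is the graph of an isomorphism, which you correctly flag and which is supplied by the order computation in the proof of Theorem~\ref{thm:existence-decomposition} --- but it buys an explicit by-product, namely that the representations $\psi_i$ are pairwise non-isomorphic, which is precisely the multiplicity-free condition of Theorem~\ref{thm:main}; the paper's counting argument is shorter but obtains that fact only implicitly. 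Two minor points: you should dispose of the trivial case $m=1$ separately (there $|\rho|$ may equal $2$, so your appeal to Corollary~\ref{cor:unique-minimal-normal-subgroup} does not apply, and the paper treats it first), and your remark that at least one $\rho_i$ has order $2p$ when $m\ge2$ is exactly the right observation to keep the hypothesis of that corollary available.
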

	\begin{proof}
		If $m=1$, it is clear that $\mathcal{N}\cong \mathcal{M}_1$. 
		
		Now we suppose that $m\ge 2$.
		Let $G_i=\mathbb{Z}_p^{d_1}\rtimes_{\psi_i}\operatorname{D}_{2r}$ and let $G$ be the orientation preserving automorphism group of $\calM=\mathcal{M}_1\times\cdots\times\mathcal{M}_m$.
		Write $\mathcal{M}_i=\RM(\mathbb{Z}_p^{d_1}\rtimes_{\psi_i}\operatorname{D}_{2r},\rho_i,\tau_i)$ and
		then \[G=\langle (\rho_1,\ldots,\rho_m),(\tau_1,\ldots,\tau_m)\rangle.\]
		By Theorem~\ref{thm:existence-decomposition} and Corollary~\ref{cor:aut-map},
		$G\cong\mathbb{Z}_p^{d}\rtimes\operatorname{D}_{2r}$ for some integer $d$.
		Let $V_1,\ldots,V_n$ be all minimal normal subgroups of $G$ contained in $\mathbb{Z}_p^d$.
		Then by Corollary~\ref{cor:unique-minimal-normal-subgroup},
		$G=(V_1\times\cdots\times V_n)\rtimes_{(\eta_1,\ldots,\eta_n)}\operatorname{D}_{2r}$
		where $\eta_i\not\cong\eta_j$ for every $i,j\in\{1,\ldots,n\}$ with $i\ne j$.
		Let $p_i$ be the natural projection from $G$ to $G_i$.
		Then $\ker(p_i)\le V_1\times\cdots\times V_n$ which implies that
		$\ker(p_i)=V_{i_1}\times\cdots V_{i_k}$ for some $1\le i_1<i_2<\cdots<i_k\le n$.
		Since $G_i\cong G/\ker(p_i)\cong\prod_{i\in\{1,\ldots,n\}{\setminus}\{ i_1,\ldots,i_k\}} V_i\rtimes\operatorname{D}_{2r}$,
		we have that $\prod_{i\in\{1,\ldots,n\}{\setminus}\{i_1,\ldots,i_k\}} V_i$ is minimal normal in $G_i$.
		Therefore, $k=n-1$ and $V_j\cong\mathbb{Z}_p^{d_j}$ where \[j\in\{1,\ldots,n\}{\setminus}\{i_1,\ldots,i_{n-1}\}.\] 
		Since $\mathcal{M}/\ker(p_i)\cong\mathcal{M}_i$ and $\mathcal{M}_i\not\cong\mathcal{M}_j$ when $i\ne j$, by Lemma~\ref{lem:quotient-map}, we have that
		$\ker(p_i)\neq \ker(p_j)$ when $i\ne j$.
		Therefore, $n=m$ and $G\cong (\mathbb{Z}_p^{d_1}\times\cdots\times\mathbb{Z}_p^{d_m})\rtimes\operatorname{D}_{2r}$.
	\end{proof}
	
	\begin{corollary}\label{iso}
		Using notation in Corollary~\ref{coro:auto-ProducMap}.
		Suppose  that an irreducible rotary augmented PX map $\calN$ of length $r$ is isomorphic to a quotient map of $\mathcal{M}_1\times\cdots\times\mathcal{M}_m$.
		Then $\calN\cong \calM_i$ for some $i\in \{1,\ldots,m\}$.

		%
	\end{corollary}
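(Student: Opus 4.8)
The plan is to pin down exactly which normal subgroups $W$ of $G$ can produce an \emph{irreducible} quotient map of length $r$, and then to recognise each such quotient as one of the $\calM_i$. Write $\calM=\calM_1\times\cdots\times\calM_m=\RM(G,\rho,\tau)$, where by Corollary~\ref{coro:auto-ProducMap} the orientation-preserving automorphism group is $G=(V_1\times\cdots\times V_m)\rtimes_{(\eta_1,\ldots,\eta_m)}\operatorname{D}_{2r}$ with the $V_i$ pairwise non-isomorphic simple $\operatorname{D}_{2r}$-modules (the distinctness being exactly Corollary~\ref{cor:unique-minimal-normal-subgroup}); set $V=V_1\times\cdots\times V_m$, the unique Sylow $p$-subgroup of $G$ since $p\nmid 2r$. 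By hypothesis $\calN\cong\calM/W$ for some $W\trianglelefteq G$ with $\rho,\tau\notin W$.

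First I would use irreducibility and the prescribed length to force $W\le V$. Since $\calN$ is irreducible of length $r$, Corollary~\ref{cor:aut-map} together with Definition~\ref{def:irr-PXmap} gives that its automorphism group $G/W$ is isomorphic to $\ZZ_p^{d'}\rtimes\operatorname{D}_{2r}$ for some $d'$, so $|G/W|=2rp^{d'}$. As $|G|=2rp^{\dim V}$, the index $|W|=p^{\dim V-d'}$ is a power of $p$; hence $W$ consists of $p$-elements and lies in the unique Sylow $p$-subgroup $V$, that is $W\le V$.

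Next I would determine $W$ as a module. Being $G$-normal and contained in $V$, the subgroup $W$ is a $\operatorname{D}_{2r}$-submodule of the multiplicity-free module $V=V_1\oplus\cdots\oplus V_m$; since the $V_i$ are pairwise non-isomorphic simple modules, the submodule lattice is Boolean, so $W=\bigoplus_{i\in S}V_i$ for some $S\subseteq\{1,\ldots,m\}$. Then $G/W\cong\bigl(\bigoplus_{i\notin S}V_i\bigr)\rtimes\operatorname{D}_{2r}$, and the irreducibility of $\calN$ forces its $p$-part $\bigoplus_{i\notin S}V_i$ to be simple, i.e. $|S^{c}|=1$, say $S^{c}=\{i_0\}$. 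Thus $W=\bigoplus_{i\ne i_0}V_i$.

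Finally I would identify the quotient. This particular $W$ is precisely one of the kernels $\ker(p_k)$ appearing in Corollary~\ref{coro:auto-ProducMap} (the one whose surviving factor is $V_{i_0}$), and that corollary records $\calM/\ker(p_k)\cong\calM_k$; hence $\calN\cong\calM/W\cong\calM_k$, as required. The main obstacle is the module-theoretic bookkeeping of the middle step: one must be certain that the only $G$-normal subgroups $W\le V$ with simple quotient are exactly the $\ker(p_k)$, which rests entirely on the multiplicity-freeness of $V$ provided by Corollary~\ref{cor:unique-minimal-normal-subgroup}. Everything else is a direct order count and an application of the quotient-map dictionary of Proposition~\ref{lem:quotient-maps} and Lemma~\ref{lem:quotient-map}.
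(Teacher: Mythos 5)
Your proposal is correct and follows essentially the same route as the paper's proof: identify the kernel $W$ as a normal $p$-subgroup lying in the multiplicity-free module $V_1\times\cdots\times V_m$, conclude from the Boolean submodule lattice and irreducibility of $\calN$ that $W$ is the product of all but one $V_i$, and recognise it as $\ker(p_j)$ so that $\calN\cong\calM_j$. You simply make explicit the order count and the module-theoretic step that the paper leaves implicit.
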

	\begin{proof}
		Let $L$ be the orientation preserving automorphism group of $\calN$.
		Since $L\cong\mathbb{Z}_p^{d'}\rtimes\operatorname{D}_{2r}$ for some ${d'}$, there exists a subgroup $V\le V_1\times\cdots\times V_m$ such that $\mathcal{M}/V\cong\mathcal{N}$.
		Since $\mathcal{N}$ is irreducible, we have that $V=V_{i_1}\times\cdots\times V_{i_{m-1}}$ for some $1\le i_1<\cdots<i_{m-1}\le m$.
		Hence $V=\ker(p_j)$ where $j\in\{1,\ldots,m\}$ is the integer not equal to any of $i_1,\ldots,i_{m-1}$.
		This implies that $\mathcal{N}\cong\mathcal{M}_j$.
	\end{proof}
	
	Note that for every arc-regular map $\mathcal{N}$, we have $\mathcal{N}\cong\mathcal{N}\times\mathcal{N}$.
	Therefore, if there exists a decomposition $\mathcal{M}\cong \mathcal{N}_1\times\cdots \times \mathcal{N}_n$ for an arc-regular map $\mathcal{M}$,
	then we can pick $i_1,i_2,\ldots,i_k$ such that $\mathcal{M}\cong\mathcal{N}_{i_1}\times\cdots\times\mathcal{N}_{i_k}$ and $\mathcal{N}_{i_{j_1}}\not\cong\mathcal{N}_{i_{j_2}}$ if $j_1\neq j_2$.
	Now we have the following result.
	
	\begin{theorem}\label{ud}
		Let $\mathcal{M}_1,\ldots,\mathcal{M}_m,\mathcal{N}_1,\ldots,\mathcal{N}_n$ be irreducible rotary augmented PX maps
		whose underlying graph is of length $r$.
		Let $\mathcal{M}_{i}\not\cong\mathcal{M}_j$ for $i\ne j$ and let $\mathcal{N}_i\not\cong\mathcal{N}_j$ for $i\ne j$.
		Then $\mathcal{M}_1\times\cdots\times\mathcal{M}_m\cong\mathcal{N}_1\times\cdots\times\mathcal{N}_n$ if and only if $m=n$ and there is a permutation $\sigma$ of $\{1,\ldots,n\}$ such that $\mathcal{M}_i\cong\mathcal{N}_{\sigma(i)}$ for all $i$.
	\end{theorem}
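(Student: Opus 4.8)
The plan is to prove the nontrivial (forward) implication by showing that each irreducible factor of one product appears as an irreducible quotient map of the other, and then to run a pigeonhole count using the pairwise non-isomorphism of the factors. The backward implication I would dispose of immediately: since the direct product of rotary maps obeys the commutative and associative laws (as noted after Definition~\ref{def:quot-RotaMap}), reindexing the factors via $\sigma$ and substituting each $\mathcal{M}_i$ for the isomorphic $\mathcal{N}_{\sigma(i)}$ yields $\mathcal{M}_1\times\cdots\times\mathcal{M}_m\cong\mathcal{N}_1\times\cdots\times\mathcal{N}_n$.

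For the forward direction, write $\mathcal{M}=\mathcal{M}_1\times\cdots\times\mathcal{M}_m$ and $\mathcal{N}=\mathcal{N}_1\times\cdots\times\mathcal{N}_n$, and assume $\mathcal{M}\cong\mathcal{N}$. First I would establish that each $\mathcal{N}_j$ is an irreducible quotient map of $\mathcal{N}$. By Corollary~\ref{coro:auto-ProducMap}, the orientation-preserving automorphism group of $\mathcal{N}$ is $G=(W_1\times\cdots\times W_n)\rtimes\operatorname{D}_{2r}$, where $W_j=\mathbb{Z}_p^{e_j}$ is the elementary abelian part of the group of $\mathcal{N}_j$; letting $q_j$ denote the natural projection of $G$ onto its $j$-th factor group $W_j\rtimes\operatorname{D}_{2r}$, the computation in the proof of Corollary~\ref{coro:auto-ProducMap} shows $\mathcal{N}/\ker(q_j)\cong\mathcal{N}_j$. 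Since $\mathcal{N}_j$ is irreducible, this exhibits it as an irreducible quotient map of $\mathcal{N}$. Composing the homomorphism supplied by Proposition~\ref{lem:quotient-maps} with the isomorphism $G(\mathcal{M})\to G(\mathcal{N})$ coming from $\mathcal{M}\cong\mathcal{N}$, I would then conclude, again via Proposition~\ref{lem:quotient-maps}, that $\mathcal{N}_j$ is isomorphic to an irreducible quotient map of $\mathcal{M}=\mathcal{M}_1\times\cdots\times\mathcal{M}_m$.

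Next I would apply Corollary~\ref{iso}: as $\mathcal{N}_j$ is an irreducible rotary augmented PX map of length $r$ that is a quotient map of $\mathcal{M}_1\times\cdots\times\mathcal{M}_m$, we obtain $\mathcal{N}_j\cong\mathcal{M}_{\sigma(j)}$ for some index $\sigma(j)$, thereby defining a map $\sigma\colon\{1,\ldots,n\}\to\{1,\ldots,m\}$. This $\sigma$ is injective: if $\sigma(j_1)=\sigma(j_2)$ then $\mathcal{N}_{j_1}\cong\mathcal{M}_{\sigma(j_1)}=\mathcal{M}_{\sigma(j_2)}\cong\mathcal{N}_{j_2}$, forcing $j_1=j_2$ because the $\mathcal{N}_j$ are pairwise non-isomorphic; hence $n\le m$. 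Exchanging the roles of $\mathcal{M}$ and $\mathcal{N}$ produces an injection $\{1,\ldots,m\}\to\{1,\ldots,n\}$ in the same way, giving $m\le n$. Therefore $m=n$ and $\sigma$ is a permutation with $\mathcal{N}_j\cong\mathcal{M}_{\sigma(j)}$; passing to $\sigma^{-1}$ yields the permutation $\mathcal{M}_i\cong\mathcal{N}_{\sigma^{-1}(i)}$ demanded in the statement.

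I expect the only genuine obstacle to be the first step, namely realizing each factor as a quotient map, since it rests on knowing the precise structure of $G(\mathcal{N})$ and on matching the projection kernels $\ker(q_j)$ to the individual factors; both of these are already secured by Corollary~\ref{coro:auto-ProducMap} together with the identification $\mathcal{N}/\ker(q_j)\cong\mathcal{N}_j$ in its proof. Once the factors are recognized as irreducible quotient maps, the remainder is the clean bijection-by-two-injections count above, in which the hypotheses of pairwise non-isomorphism are exactly what render $\sigma$ injective in each direction.
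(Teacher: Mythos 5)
Your proposal is correct and follows essentially the same route as the paper's proof: realize each $\mathcal{N}_j$ as an irreducible quotient map of $\mathcal{M}_1\times\cdots\times\mathcal{M}_m$, invoke Corollary~\ref{iso} to match it with some $\mathcal{M}_j$, and use the pairwise non-isomorphism hypotheses to get the two inequalities $n\le m$ and $m\le n$. The only difference is that you spell out the quotient-map step (via the projections and Proposition~\ref{lem:quotient-maps}) which the paper leaves implicit.
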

	\begin{proof}
		Let $\mathcal{M}=\mathcal{M}_1\times\cdots\times\mathcal{M}_m$.
		Since $\mathcal{M}_1\times\cdots\times\mathcal{M}_m\cong\mathcal{N}_1\times\cdots\times\mathcal{N}_n$, we have that $\mathcal{N}_i$ is isomorphic to a quotient map of $\mathcal{M}$. By Corollary~\ref{iso}, for every $i\in\{1,\ldots,n\}$,
		there exists $j\in\{1,\ldots,m\}$ such that $\mathcal{N}_i\cong \mathcal{M}_j$. We thereby conclude that $n\leq m$ because $\mathcal{N}_i$ are pairwise non-isomorphic.
		
		In a similar way, we obtain that $m\leq n$ and $\mathcal{M}_i\cong \mathcal{N}_j$ for all $1\leq i\leq m$. Therefore, $n=m$ and there is a permutation $\sigma$ of $\{1,\ldots,n\}$ such that $\mathcal{M}_i\cong\mathcal{N}_{\sigma(i)}$ for all $i$.
	\end{proof}
	Theorem~\ref{fenjie} is a direct consequence of combining Theorems~\ref{thm:existence-decomposition}, \ref{ud}, and Corollary~\ref{iso}.
	\begin{theorem}\label{fenjie}
		Let $\mathcal{M}$ be a $G$-rotary augmented PX map of length $r$.
		Then 
		$$\mathcal{M}=\mathcal{M}_1\times\dots\times \mathcal{M}_k,$$
		where $\mathcal{M}_i$ is an irreducible $G_i$-rotary augmented PX map of length $r$, $\mathcal{M}_i\ncong\mathcal{M}_j$ for $1\leq i\neq j\leq k$. Moreover, this decomposition is unique up to isomorphism and $$G\cong (\mathbb{Z}_p^{d_1}\times\cdots\times\mathbb{Z}_p^{d_k})\rtimes \operatorname{D}_{2r}$$ where $G_i=\mathbb{Z}_p^{d_i}\rtimes \operatorname{D}_{2r}$.
	\end{theorem}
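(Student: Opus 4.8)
The plan is to assemble the statement directly from the three prior results, exactly as the sentence preceding it advertises. Everything reduces to correctly combining existence, the collapse of repeated factors, the automorphism-group computation, and uniqueness, so there is no genuinely hard mathematical step; the only point requiring care is the bookkeeping that ensures the hypotheses of each cited result are actually met.

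First I would establish existence. By Theorem~\ref{thm:existence-decomposition}, the map $\mathcal{M}$ is isomorphic to a direct product of irreducible rotary augmented PX maps of length $r$, say $\mathcal{M}\cong\mathcal{N}_1\times\cdots\times\mathcal{N}_l$. These factors need not be pairwise non-isomorphic, so to reach the desired form I would invoke the observation recorded immediately before Theorem~\ref{ud}: since every arc-regular map $\mathcal{N}$ satisfies $\mathcal{N}\cong\mathcal{N}\times\mathcal{N}$, one may delete repeated isomorphism classes from the list without changing the product up to isomorphism. This produces a decomposition $\mathcal{M}\cong\mathcal{M}_1\times\cdots\times\mathcal{M}_k$ in which each $\mathcal{M}_i$ is an irreducible rotary augmented PX map of length $r$ with $\mathcal{M}_i\not\cong\mathcal{M}_j$ for $i\ne j$, as required.

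Next I would read off the group structure. With the $\mathcal{M}_i$ now pairwise non-isomorphic, the hypotheses of Corollary~\ref{coro:auto-ProducMap} are satisfied, so the orientation-preserving automorphism group of $\mathcal{M}_1\times\cdots\times\mathcal{M}_k$ is $(\mathbb{Z}_p^{d_1}\times\cdots\times\mathbb{Z}_p^{d_k})\rtimes\operatorname{D}_{2r}$, where $G_i=\mathbb{Z}_p^{d_i}\rtimes\operatorname{D}_{2r}$. Since this product is isomorphic to $\mathcal{M}$ and $G$ is by definition the orientation-preserving automorphism group of $\mathcal{M}$, this yields the claimed isomorphism $G\cong(\mathbb{Z}_p^{d_1}\times\cdots\times\mathbb{Z}_p^{d_k})\rtimes\operatorname{D}_{2r}$.

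Finally, uniqueness is exactly Theorem~\ref{ud}: if $\mathcal{M}_1\times\cdots\times\mathcal{M}_k\cong\mathcal{N}_1\times\cdots\times\mathcal{N}_{k'}$ are two decompositions into pairwise non-isomorphic irreducible factors, then $k=k'$ and the factors agree up to a permutation. The step I would watch most carefully is the passage to pairwise non-isomorphic factors, because both Corollary~\ref{coro:auto-ProducMap} and Theorem~\ref{ud} explicitly require this hypothesis; once the collapse via $\mathcal{N}\cong\mathcal{N}\times\mathcal{N}$ has been carried out, the remaining assertions follow immediately from the cited results with no further computation.
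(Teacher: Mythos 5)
Your proposal is correct and follows essentially the same route as the paper, which simply declares Theorem~\ref{fenjie} to be a direct consequence of Theorem~\ref{thm:existence-decomposition}, Theorem~\ref{ud}, and the surrounding corollaries: existence from Theorem~\ref{thm:existence-decomposition}, removal of repeated factors via the observation $\mathcal{N}\cong\mathcal{N}\times\mathcal{N}$ recorded just before Theorem~\ref{ud}, the group structure from Corollary~\ref{coro:auto-ProducMap}, and uniqueness from Theorem~\ref{ud}. Your explicit attention to the pairwise non-isomorphism hypothesis is exactly the right bookkeeping point, and nothing further is needed.
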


	\section{Reducible PX maps}
	
	We now prove Theorem~\ref{thm:main}.
	\begin{proof}[Proof of Theorem~\ref{thm:main}]
		Let $\mathcal{M}$ be a $G$-rotary PX map with underlying graph $\C(p,r,s)$. 
		By Theorem~\ref{fenjie}, $\mathcal{M}=\mathcal{M}_1\times\dots\times \mathcal{M}_k$ where $\mathcal{M}_i$ is an irreducible rotary augmented PX maps and $\calM_i\not\cong \calM_j$ if $i\neq j$.
		
		By Corollary~\ref{coro:trivial-PX-map} and~\ref{coro:isomorphic-ggMap}, there is a one-to-one correspondence between irreducible rotary augmented PX maps of length $r$ and irreducible representations of $\D_{2r}$ (excluding $\gamma_{-1,1}$).
		For every $i$, let $\psi_i$ be the corresponding irreducible representation and let $d_i$ be the degree of $\psi_i$.
            Then, for each $i$, $\calM_i$ is $(\mathbb{Z}_p^{d_i}\rtimes_{\psi_i}\D_{2r})$-rotary map.		
		Then \[G=(\ZZ_p^{d_1}\times \cdots \times \ZZ_p^{d_k})\rtimes_{\sum \psi_i}\D_{2r}\] by Theorem~\ref{fenjie}.
		It follows from Theorem~\ref{thm:arc-regular-PX-graph} that $s+1=\sum d_i$.
		By Corollary~\ref{cor:unique-minimal-normal-subgroup}, $\sum \psi_i$ is a multiplicity-free representation of $\D_{2r}$.
		The other direction is clear.
            This completes the proof.
	\end{proof}

	\begin{proof}[Proof of Corollary~\ref{cor:faithful}]
		By Lemma~\ref{lem:faith-rep}, if $d$ is odd or $d$ is even with $p^{d/2}\not\equiv -1\pmod{r}$, then the set of degrees of irreducible representations of $\D_{2r}$ is \[\{1,\ 1,\ 2d,\dots,2d\}.\]
		If $d$ is even with $p^{d/2}\equiv -1\pmod{r}$, then the set of degrees of irreducible representations is \[\{1,\ 1,\ d,\dots,d\}.\] Then it follows from Theorem~\ref{thm:main}. 
	\end{proof}
	
	\begin{example}
		Let $p=13$ , let $r=7$ and let $s\in\{1,\ldots,6\}$.
		Then $d=2$ is the least positive integer such that $r\mid p^d-1$.
		Note that $d$ is even and $p^{d/2}\equiv -1\pmod{r}$.
		Then by Lemma~\ref{lem:faith-rep} {\rm (ii)}, there are $\varphi(r)/d=3$ distinct faithful irreducible representations and these representations are all of degree $d$.
		Since $7$ is a prime, all irreducible representations of $\operatorname{D}_{14}$ of degree greater than $1$ is faithful.
		Therefore, $\operatorname{D}_{14}$ has five irreducible representations of degree $1,1,2,2,2$ respectively.
		By Corollary~\ref{cor:faithful}, there is a rotary map with underlying graph isomorphic to $\operatorname{C}(p,r,s)$.
		Write $s+1=2k+m$ where $0\le m\le 1$.
		If $s$ is odd, then $m=0$.
          By Theorem~\ref{thm:main}, to achieve a sum of degrees equal to $s+1=2k$, we must select an even number of the  representations of degree $1$. 
          This means we select either both of them or neither of them.
          Therefore, there are $\binom{3}{k-1}+\binom{3}{k}$ rotary PX maps with underlying graph isomorphic to $\operatorname{C}(p,r,s)$.
		If $s$ is even, then $m=1$.
		By Theorem~\ref{thm:main}, we select one of the representations of degree $1$.
		Therefore, there are $2\binom{3}{k}$
		rotary PX maps with underlying graph isomorphic to $\operatorname{C}(p,r,s)$.
	\end{example}
	
	%
	%
	%
	%

	\appendix
	\section{Representation theory of dihedral groups}\label{sec:appendix}
	\begin{proof}[Proof of Lemma~\ref{lem:irreducible-representation-dihedral}]
		Since $\mathbb{F}_p^\times$ is cyclic, we have that $D'\le\ker(\psi)$.
		If $r$ is odd, then $D'\cong\mathbb{Z}_r$ and $D/D'\cong\mathbb{Z}_2$.
		Therefore, $\psi\in\{\gamma_{1,1},\gamma_{-1,-1}\}$.
		If $r$ is even, then $D'\cong\mathbb{Z}_{r/2}$ and $D/D'\cong\mathbb{Z}_2^2$.
		Therefore, we have $\psi\in\{\gamma_{1,1},\gamma_{-1,-1},\gamma_{1,-1},\gamma_{-1,1}\}$.
		This completes the proof of (i) and (ii).
		Now we prove (iii).
		Since $z\notin \operatorname{Z}(D)$, there exists an involution $z_1\in D$ such that $D=\langle z,z_1\rangle$.
		Suppose $v\in E_1(z)\cap E_{1}(z_1)$.
		If $v\ne 0$, then $\mathbb{F}_pv$ is invariant under the action of $D$.
		Since $D$ is irreducible on $V$, we have that $V=\mathbb{F}_pv$ of dimension $1$.
		This contradicts the assumption that $d\ge 2$.
		Therefore, $E_1(z)\cap E_{1}(z_1)=0$.
		Similarly, $E_1(z)\cap E_{-1}(z_1)=0$, $E_{-1}(z)\cap E_{1}(z_1)=0$ and
		$E_{-1}(z)\cap E_{-1}(z_1)=0$.
		Let $d_1=\dim(E_1(z))$ and let $d_{-1}=\dim(E_{-1}(z))$.
		Note that $E_1(z)\oplus E_{-1}(z)=V$.
		Therefore, we have that $\dim(E_1(z_1))\le\min(d_1,d_2)$
		and $\dim(E_{-1}(z_1))\le\min(d_1,d_2)$.
		Also note that $E_1(z_1)\oplus E_{-1}(z_1)=V$.
		We have that $d=\dim(E_1(z_1))+\dim(E_{-1}(z_1))\le 2\min(d_1,d_2)$.
		Since $d=d_1+d_2$, we have that $d_1=d_2=d/2$.
	\end{proof}
	
	\begin{proof}[Proof of Lemma~\ref{lem:module-homomorphism}]
        Let $C=\langle c\rangle$ and let $M_1=\operatorname{End}_C(M)$.
		For every $f\in M_1$ and integers $i,j$, define an operation $(c^ib^j)\cdot f$ given by $(c^ib^j)\cdot f(m)=c^ib^j(f(b^jm))$ for every $m\in M$. We first show that $M_1$ with this operation forms a $D$-module and $M\cong M_1$.
  
		One can check that $(c^ib^j)\cdot f\in M_1$ for every $f\in M_1$, and integers $i,j$.
		Now let $c^{i_1}b^{j_1},c^{i_2}b^{j_2}\in D$ and let $m\in M$.
		Then \[\begin{aligned}(((c^{i_1}b^{j_1})(c^{i_2}b^{j_2}))\cdot f)(m)&=((c^{i_1+(-1)^{j_1}i_2}b^{j_1+j_2})\cdot f)(m)\\
			&=c^{i_1+(-1)^{j_1}i_2}b^{j_1+j_2}f(b^{j_1+j_2}m).\end{aligned}\]
		We also have
		\[\begin{aligned}
			(c^{i_1}b^{j_1}\cdot(c^{i_2}b^{j_2}\cdot f))(m)&=c^{i_1}b^{j_1}((c^{i_2}b^{j_2}\cdot f)(b^{j_1}m))\\
			&=c^{i_1}b^{j_1}((c^{i_2}b^{j_2})f(b^{j_1+j_2})m)\\
			&=c^{i_1+(-1)^{j_1}i_2}b^{j_1+j_2}f(b^{j_1+j_2}m)
		\end{aligned}\]
		Therefore, $M_1$ with this operation forms a $D$-module.
		
		Let $0\ne m\in E_1(b)$. 
		Let $\psi:M_1\rightarrow M$ be a map given by $f\mapsto f(m)$.
		One can check that $\psi$ is a linear map.
		Let $c^ib^j\in D$ and $f\in M_1$.
		Then \[\psi(c^ib^j\cdot f)=(c^ib^j\cdot f)(m)=c^ib^jf(b^jm)=c^ib^jf(m)=c^ib^j\psi(f).\]
		Therefore, $\psi$ is a $D$-module homomorphism.
		As $\psi({\rm id})=m$, we have that $\psi\ne 0$. 
		Since $M$ is a simple $D$-module,
		we thereby have that $\psi$ is surjective.
		Let $f\in\ker(\psi)$. 
		Then $f(m)=0$. 
		Since $M$ is a simple $D$-module and $m\in E_1(b)$, 
		we have that $M=\mathbb{F}_pDm=\mathbb{F}_pCm$.
		Therefore $f(M)=0$ as $f$ is a $C$-module homomorphism.
		This implies that $\ker(\psi)=0$, hence $\psi$ is a $D$-module isomorphism.

        Now we prove the statements {\rm (i)} and {\rm (ii)}.
		
  {\rm (i)} It can be seen that $\operatorname{End}_D(M)=\operatorname{C}_{M_1}(b)$.
		Then by Lemma~\ref{lem:irreducible-representation-dihedral} {\rm (iii)}, $\dim_{\mathbb{F}_p}(\operatorname{End}_D(M))=d/2$.
		By Wedderburn's little theorem, the endomorphism ring $\operatorname{End}_D(M)$ is a field of characteristic $p$.
		We then have that $\operatorname{End}_D(M)\cong \mathbb{F}_{p^{d/2}}$. 
		Therefore, $(\operatorname{End}_D(M))^\times\cong \mathbb{Z}_{p^{d/2}-1}$.
		
		{\rm (ii)} It is clear that $E_1(b)$ is invariant under $\operatorname{End}_D(M)$.
		Let $\psi:\operatorname{End}_D(M)\rightarrow E_1(b)$ be a map defined by $f\mapsto f(m)$.
		It is sufficient to show that $\psi$ is surjective.
		One can check that $\psi$ is a linear map.
		Let $g\in\ker(\psi)$. Then $g(m)=0$. Since $M$ is a simple $D$-module and $m\ne 0$, 
		we have that $g(M)=g(\mathbb{F}_pDm)=0$. This implies that $\ker(\psi)=0$.
		Therefore, $\psi$ is injective.
		By {\rm (iii)} and Lemma~\ref{lem:irreducible-representation-dihedral} {\rm (iii)}, 
		$\dim_{\mathbb{F}_p}(\operatorname{End}_D(M))=d/2=\dim(E_1(b))$.
		Hence $\psi$ is surjective. This completes the proof.
	\end{proof}
	\begin{proof}[Proof of Lemma~\ref{lem:faith-rep}]
		Let $\psi$ be a faithful irreducible representation of $C$ and let \[\mathbb{F}_p[x]/(f(x))\] be the field corresponding to $\psi$.
		Then $f(x)$ is an irreducible polynomial of degree $d$ over $\mathbb{F}_p$ and $f(x)\mid x^r-1$.
		Let $\bar{\mathbb{F}}_p$ be the split field of $x^r-1$. 
		Then there exists $\alpha_1,\ldots,\alpha_d\in\bar{\mathbb{F}}_p$
		such that $f(x)=(x-\alpha_1)\cdots(x-\alpha_d)$ and $\alpha_i^r=1$ for every $i\in\{1,\ldots,d\}$.
		If $f(x)\mid x^i-1$ for some $i<r$, then $x$ has order $i$ in $\mathbb{F}_p[x]/(f(x))$.
		However, note that $x$ is of order $r$ in $\mathbb{F}_p[x]/(f(x))$.
		Therefore, $f(x)\nmid x^i-1$ for every $i<r$, which implies that $\alpha_1,\ldots,\alpha_d$ are primitive r-th roots.
		Let $\sigma\in\operatorname{Gal}(\bar{\mathbb{F}}_p/\mathbb{F}_p)$ be the Frobenius map. Then $\sigma(f(x))=f(x)$, which implies that $\{\alpha_1,\ldots,\alpha_d\}$ is invariant under $\langle\sigma\rangle$.
		Since $f(x)$ is irreducible, $\langle \sigma\rangle$ is transitive and fixed-point-free on $\{\alpha_1,\ldots,\alpha_d\}$. 
		Therefore, $\alpha_1^{p^d}=\alpha_1$ which implies that $d$ is the least positive integer such that $r\mid p^d-1$.
		Since there are $\varphi(r)$ primitive $r$-th roots, we have that there are exactly $\varphi(r)/d$ orbits under the action of $\langle \sigma\rangle$.
		Therefore, there are $\varphi(r)/d$ such irreducible polynomial, which implies that there are $\varphi(r)/d$ different faithful irreducible representations of $C$.
		
		Let $\iota:C\rightarrow C$ be a map given by $c\mapsto c^b$.
		Let $k_0$ be the constant term of $f(x)$.
		Then $\psi\circ\iota$ is also a faithful irreducible representation of $C$ and corresponds to the field $\mathbb{F}_p[x]/(k_0^{-1}x^df(x^{-1}))$.
		Note that $k_0^{-1}x^df(x^{-1})=(x-\alpha_1^{-1})\cdots(x-\alpha_d^{-1})$.
		Then $f(x)=k_0^{-1}x^df(x^{-1})$ if and only if $\alpha_1^{p^i}=\alpha_1^{-1}$ for some $i$.
		Therefore, 
		if $d$ is odd or $d$ is even with $p^{d/2}\not\equiv -1\pmod{r}$, then $f(x)\ne x^df(x^{-1})$ which implies that $\psi\not\cong \psi\circ\iota$;
		if $d$ is even and $p^{d/2}\equiv -1\pmod{r}$, then $f(x)\mid x^df(x^{-1})$ which implies that $\psi\cong \psi\circ\iota$.
		Then by \cite[Theorem 3.1]{sim1994Faithful}, we complete the proof.
	\end{proof}
	\bibliography{bib}
	\bibliographystyle{plain}
\end{document}